\documentclass[11pt,reqno]{amsart}
\usepackage{a4wide}

\numberwithin{equation}{section}\numberwithin{equation}{section}
\usepackage{mathrsfs}
\usepackage{amsfonts}
\usepackage{amsmath}
\usepackage{stmaryrd}
\usepackage{amssymb}
\usepackage{amsthm}
\usepackage{mathrsfs}
\usepackage{url}
\usepackage{amsfonts}
\usepackage{amscd}
\usepackage{indentfirst}
\usepackage{enumerate}
\usepackage{amsmath,amsfonts,amssymb,amsthm}
\usepackage{amsmath,amssymb,amsthm,amscd}
\usepackage{graphicx,mathrsfs}
\usepackage{appendix}

\usepackage[numbers,sort&compress]{natbib}

\usepackage{esint}
\usepackage{graphicx}
\usepackage[dvipsnames]{xcolor}
\usepackage[colorlinks,linkcolor=blue, citecolor=blue]{hyperref}
\usepackage[latin1]{inputenc}

\newtheorem{teo}{Theorem}[section]
\newtheorem{lemma}[teo]{Lemma}
\newtheorem{prop}[teo]{Proposition}
\newtheorem{cor}[teo]{Corollary}
\newtheorem{rem}[teo]{Remark}

\newtheorem{ese}[teo]{Example}
\numberwithin{equation}{section}

\newcommand\R{\mathbb R}

\newcommand\mbb\mathbb
\newcommand\mbf\mathbf
\newcommand\mcal\mathcal
\newcommand\mfrak\mathfrak
\newcommand\mrm\mathrm
\newcommand\msf\mathsf
\renewcommand\a\alpha
\renewcommand\b\beta
\newcommand\g\gamma
\newcommand\G\Gamma
\renewcommand\d\delta
\newcommand\D\Delta
\newcommand\e\varepsilon
\newcommand\z\zeta
\renewcommand\t\theta
\newcommand\Th\Theta
\newcommand\la\lambda
\newcommand\La\Lambda
\newcommand\s\sigma
\newcommand\si\varsigma
\newcommand\Si\Sigma
\newcommand\ups\upsilon
\newcommand\U\Upsilon
\newcommand\ph\varphi
\renewcommand\o\omega
\renewcommand\O\Omega
\newcommand\wt\widetilde
\newcommand\wh\widehat
\newcommand\ol\overline
\newcommand\ul\underline
\newcommand\mr\mathring
\newcommand\ub\underbrace
\newcommand\pa\partial
\newcommand\n\nabla
\newcommand\fa\forall
\newcommand\ex\exists
\newcommand\es\emptyset
\newcommand\wk\rightharpoonup
\newcommand\inc\hookrightarrow
\newcommand\linf\varliminf
\newcommand\lsup\varlimsup
\newcommand\os\overset
\newcommand\us\underset
\newcommand\sr\stackrel
\newcommand\Ot\Leftarrow
\newcommand\To\Rightarrow
\newcommand\map\mapsto
\newcommand\ot\leftarrow
\newcommand\lot\longleftarrow
\newcommand\lto\longrightarrow
\newcommand\tot\leftrightarrow
\newcommand\ltot\longleftrightarrow
\newcommand\sm\backslash
\renewcommand\Cup\bigcup
\renewcommand\Cap\bigcap
\newcommand\sub\subset
\newcommand\Sub\Subset
\newcommand\sne\subsetneq
\newcommand\bus\supset
\newcommand\Bus\Supset

\newcommand\eq\equiv
\newcommand\ox\otimes
\newcommand\Ox\bigotimes
\newcommand\pl\oplus
\newcommand\Pl\bigoplus
\newcommand\x\times
\renewcommand\c\circ
\newcommand\q\quad
\renewcommand\l\left
\renewcommand\r\right
\newcommand\fr\frac

\allowdisplaybreaks

\newcommand{\Lu}{\color{red}}

\def\sideremark#1{\ifvmode\leavevmode\fi\vadjust{\vbox to0pt{\vss
 \hbox to 0pt{\hskip\hsize\hskip1em
 \vbox{\hsize2.1cm\tiny\raggedright\pretolerance10000
  \noindent #1\hfill}\hss}\vbox to15pt{\vfil}\vss}}}%

\begin{document}

\title[Critical points of positive solutions]
{On the number and location of critical points of  solutions  of nonlinear elliptic equations in domains with a small hole}
\author[M. Grossi and P. Luo]{Massimo Grossi and  Peng Luo}

 \address[Massimo Grossi]{
Dipartimento di Matematica Guido Castelnuovo, Universit$\grave{a}$ Sapienza, P.le Aldo Moro 5, 00185 Roma, Italy}
 \email{grossi@mat.uniroma1.it}

 \address[Peng Luo]{School of Mathematics and Statistics and Hubei Key Laboratory of Mathematical Sciences, Central China Normal University, Wuhan 430079, China}
 \email{pluo@mail.ccnu.edu.cn}
\begin{abstract}
In this paper we study the following problem
\begin{equation*}
\begin{cases}
-\Delta u=f(u)~&\mbox{in}\ \Omega_\varepsilon,\\
u>0~&\mbox{in}\ \Omega_\varepsilon,\\
u=0~&\mbox{on}\ \partial\Omega_\varepsilon,
\end{cases}
\end{equation*}
where $\Omega_\varepsilon=\O\backslash B(P,\varepsilon)$, $\Omega\subset \R^N$ with $N\geq 2$ is a smooth bounded domain, $B(P,\varepsilon)$ is the ball centered at $P$ and radius $\varepsilon>0$ and $f$ is a smooth nonlinearity.

By some computations involving the Green  function and degree theory,
 we give the number and location of critical points of positive solutions for small $\varepsilon>0$.
\end{abstract}

\date{\today}
\maketitle

\keywords {\noindent {\bf Keywords:} {Critical points theory, nonlinear elliptic equation, Green function}
\smallskip

\subjclass{\noindent {\bf 2010 Mathematics Subject Classification:}  35B09 $\cdot$ 35J08 $\cdot$ 35J60}}
\section{Introduction and main results}\label{s0}
\setcounter{equation}{0}
A classic and fascinating problem of mathematical analysis  is  the number of the critical points of solutions  of the elliptic problem
 \begin{equation}\label{1}
\begin{cases}
-\Delta u=f(u)~&\mbox{in}\ \O,\\
u>0~&\mbox{in}\ \O,\\
u=0~&\mbox{on}\ \partial\O,
\end{cases}
\end{equation}
where $\Omega\subset \R^N$  is a smooth bounded domain,  $N\geq 2$ and $f$ is a smooth nonlinearity.

Problem \eqref{1} is a generalization of the elastic torsion problem, a classical topic
in PDEs, with references dating back to St. Venant(1856).
From then, many techniques and important results to address this problem were developed in the literature (Morse theory, degree theory, etc.). Despite the great interest aroused by the problem, many questions are still unanswered and we are far from a complete understanding of the phenomenon.

The calculation of the number of critical points of a function $u$ is  strictly related to the topological properties of the domain. This link is clearly highlighted in the following beautiful {\em Poincar\'e-Hopf Theorem} which we state in the particular case where $\O$ is a bounded smooth domain of $\R^N$. Here and in the rest of the paper $B(y,r)$ denote the ball centered at $y$ and radius $r$.
\vskip 0.1cm

\noindent \textbf{Theorem A} (Poincar\'e-Hopf Theorem). \emph{Let $\O\subset\R^N$, $N\ge2$, be a smooth bounded domain. Let $v$ be a vector field on $\O$ with isolated zeroes $x_1,..,x_k$ and such that $v(x)\cdot\nu(x)<0$ for any $x\in\partial\O$ (here $\nu$ is the outward normal vector to $\partial\O$). Then we have the formula
\begin{equation}\label{I2}
\sum_{i=1}^k index_{x_i}(v)=(-1)^N\chi(\O),
\end{equation}
where $index_x(v)=deg\big(v,B(x,\delta),0\big)$ with  small fixed $\delta>0$ and $\chi(\O)$ is the Euler characteristic of $\O$.
}

By $deg\big(v,B(x,\delta),0\big)$ we denote the classical Brower degree of a vector field $v$. Choosing $v=\nabla u$ in Theorem A we get a beautiful link between an {\em analytic }problem (to look for critical points of $u$) and a {\em topological }invariant (the Euler characteristic of $\O$). 

The first case studied in the literature is when $\O$ is a (strictly) convex domain. In this case $\chi(\O)=1$ and so \eqref{I2} becomes
\begin{equation}\label{I3}
\sum_{i=1}^k index_{x_i}(\nabla u)=(-1)^N.
\end{equation}
Of course since $u$ is a solution to \eqref{1}, we always have a maximum point for $u$ whose index is $(-1)^N$. The question is now
\vskip0.1cm\noindent
\begin{equation}\label{I4}
\hbox{\em when does the sum in \eqref{I3} reduce to a singleton?}
\end{equation}
Here we list some results that give an affirmative answer to the question \eqref{I4}.  Since it is impossible  to give an exhaustive bibliography we  limit ourselves to state some results that are closer to the interest of this paper.
\begin{itemize}
\item $f(s)=1$ and $\O\subset\R^2$ is a convex bounded domain (Makar-Limanov \cite{ML71}).
\item $f(s)=\lambda_1 s$ where $\lambda_1$ is the first eigenvalue of the Laplace operator and $\O\subset\R^N$ is a strictly convex bounded  domain (Acker,Payne and Philippin \cite{APP1981}, Brascamp and Lieb \cite{BL1976},  Korevaar  \cite{k}).
\item $f$ locally lipschitz and  $\O\subset\R^N$ is a symmetric bounded domain convex in any direction (Gidas, Ni and Nirenberg \cite{GNN1979}).
\item $f\ge0$, $\O\subset\R^2$ is a bounded domain with positive curvature and $u$ is a semi-stable solution to \eqref{1} (Cabr\'e and Chanillo \cite{CC1998}).
\end{itemize}
There are various conjectures on the uniqueness of the critical point of solutions of \eqref{1} under the mere assumption of the convexity of the domain but this seems to be a very difficult problem.

In this paper we consider the domain
$$\Omega_\varepsilon=\O\backslash B(P,\varepsilon)\hbox{ with $P\in\O$ and $\varepsilon$ small},$$
 and a solution $u_\e$ of
 \begin{equation}\label{aa2}
\begin{cases}
-\Delta u=f(u)~&\mbox{in}\ \O_\varepsilon,\\
u>0~&\mbox{in}\  \O_\varepsilon,\\
u=0~&\mbox{on}\ \partial\O_\varepsilon.
\end{cases}
\end{equation}
Next assumption on the solution $u_\e$ is crucial for our aim and it will be taken on throughout the paper,
\vskip0.2cm\noindent
\begin{equation}\label{2019-11-25-46}
|u_\varepsilon|\le C \hbox{ in }\O_\e\hbox{ with  $C$ independent of }\varepsilon.
\end{equation}
\vskip0.2cm\noindent
By the standard regularity theory, an immediate consequence of  \eqref{2019-11-25-46} is that there exist a sequence $\e_n\to0$ and $u_0\in C^2({\O})$, the solution to \eqref{1}, such that
\begin{equation}\label{I15}
\begin{cases}
\hbox{$u_{\e_n}\rightharpoonup u_0$ weakly in $H^1_0(\O)$ \big(here we extend $u_{\e_n}$ to $0$ in $B(P,\e)$\big),}\\
\hbox{$u_{\e_n}\to u_0$ in $C^2(K)$ for any compact set $K\subset\O\setminus P$.}
\end{cases}
\end{equation}
In all the paper we set $u_{\e_n}=u_\e$ and $u_0$ its weak limit.

If we write \eqref{I2} for $v=\nabla u_\e$ (again assuming that the number of critical point of $u_\e$ is finite) and denote by
\begin{equation*}
{\mathcal C}=\{\hbox{critical points of $u_0$ in $\O$}\}~\hbox{and}~\mathcal C_1=\{\nabla u_\e(x)=0\}\cap\{dist (x,\mathcal C)>\delta\},
\end{equation*}
and observing that
$\chi(\O_\e)=\chi(\O)+(-1)^{N-1}$, we get that if $P\not\in{\mathcal C}$ we have
\begin{equation}\label{I5}
\sum_{x_i \in{\mathcal C_1}} index_{x_i}(\nabla u_\e)=-1.
\end{equation}
Hence we have that the solution  $u_\e$ has at least one additional critical point which is away from $\mathcal C$ and  it must converge to $P$ as $\e\to0$.
As before a natural question arises<
\begin{equation}\label{I6}
\hbox{\em when does the sum in \eqref{I5} reduce to a singleton?}
\end{equation}
We will see that the answer to the question \eqref{I6} is positive in a quite general situation, as stated in the next theorem.
\begin{teo}\label{I7}
Suppose that $u_\e$ is a solution to \eqref{aa2} which  verifies \eqref{2019-11-25-46} and $u_0$ its weak limit. We have that if
\begin{equation}\label{I1}
P\hbox{ is not a critical point of }u_0,
\end{equation}
then for $\varepsilon$ small enough there is exactly one critical point for $u_\e$ in $B(P,d)\setminus B(P,\e)$
(here $B(P,d)\subset\O$ is chosen not containing any critical  point of $u_0$).
Moreover the  critical point $x_\e\in B(P,d)$ of $u_\e$ is a saddle point of index $-1$ which verifies
\begin{equation}\label{I16}
u_\e(x_\e)\to u_0(P),
\end{equation}
and
\begin{equation}\label{2019-11-27-03}
x_\e=P+
\begin{cases}
\Big(C_N+o(1)\Big)\nabla u_0(P)\e^\frac{N-2}{N-1}&~\mbox{for}~N\geq 3,\\
\Big(C_2+o(1)\Big)\nabla u_0(P)\frac{1}{|\log \varepsilon|}&~\mbox{for}~N=2,
\end{cases}
\end{equation}
where $C_N$ is given by
\begin{equation}\label{I23}
C_N=
\begin{cases}
-\left[\frac{(N-2)u_0(P)}{|\nabla u_0(P)|^N}\right]^\frac 1{N-1}&~\mbox{for}~N\geq 3,\\
-\frac{ u_0(P)}{ |\nabla u_0(P)|^2}&~\mbox{for}~N=2.
\end{cases}
\end{equation}
\end{teo}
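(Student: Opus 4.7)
The strategy is to obtain a matched asymptotic expansion of $u_\varepsilon$ near $P$ whose leading correction to $u_0$ is governed by the capacity potential of the small removed ball, and then to read off the critical point from the stationarity equation $\nabla u_\varepsilon=0$.

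\emph{Step 1: expansion via the Green function.}
Let $\phi_\varepsilon$ solve $-\Delta \phi_\varepsilon = 0$ in $\Omega_\varepsilon$ with $\phi_\varepsilon|_{\partial B(P,\varepsilon)}=1$ and $\phi_\varepsilon|_{\partial\Omega}=0$, i.e.\ the capacity potential of $B(P,\varepsilon)$ in $\Omega$. Expressing $\phi_\varepsilon$ in terms of the Dirichlet Green function $G(\cdot,P)$ of $-\Delta$ on $\Omega$ and matching the boundary value $1$ on $\partial B(P,\varepsilon)$ yields, on the near-field $\varepsilon\ll|x-P|\ll 1$,
$$
\phi_\varepsilon(x)=\frac{\varepsilon^{N-2}}{|x-P|^{N-2}}\bigl(1+o(1)\bigr)\ (N\ge 3),\qquad \phi_\varepsilon(x)=\frac{\log|x-P|}{\log\varepsilon}\bigl(1+o(1)\bigr)\ (N=2).
$$
Decomposing $u_\varepsilon=u_0-u_0(P)\phi_\varepsilon+R_\varepsilon$, the remainder $R_\varepsilon$ satisfies a linearised equation driven by $f(u_\varepsilon)-f(u_0)$ with boundary data of strictly lower order than the leading ones. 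Standard elliptic estimates bootstrapped in $\varepsilon$ then give $\|R_\varepsilon\|_{C^1}=o(|\nabla\phi_\varepsilon|)$ on the relevant annular scale.

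\emph{Step 2: location of the critical point.}
By \eqref{I1}, $\nabla u_0(P)\ne 0$, so on the shrinking scale the stationarity equation $\nabla u_\varepsilon(x)=0$ reads at leading order
$\nabla u_0(P)=u_0(P)\,\nabla \phi_\varepsilon(x)$. Writing $x-P=r\hat v$ with $|\hat v|=1$ and using the radial form of $\phi_\varepsilon$ from Step~1, the equation forces $\hat v=-\nabla u_0(P)/|\nabla u_0(P)|$ together with
$$
r^{N-1}=\frac{(N-2)\,u_0(P)\,\varepsilon^{N-2}}{|\nabla u_0(P)|}\ (N\ge3),\qquad r=\frac{u_0(P)}{|\nabla u_0(P)|\,|\log\varepsilon|}\ (N=2),
$$
which, after substituting $\hat v$, yields precisely \eqref{2019-11-27-03} with the constant \eqref{I23}. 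The claim $u_\varepsilon(x_\varepsilon)\to u_0(P)$ then follows from $x_\varepsilon\to P$ together with $u_0(P)\phi_\varepsilon(x_\varepsilon)=o(1)$ and the expansion of Step~1.

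\emph{Step 3: existence, uniqueness, and index via degree.}
Existence of at least one critical point in $B(P,d)\setminus\overline{B(P,\varepsilon)}$ is given by the Poincar\'e--Hopf balance \eqref{I5}; no critical point can sit on $\partial B(P,\varepsilon)$ by the Hopf lemma applied to $u_\varepsilon$. For uniqueness, consider the explicit vector field $V_\varepsilon(x):=\nabla u_0(P)-u_0(P)\nabla\phi_\varepsilon(x)$, which has a unique zero $x_\varepsilon^\star$ at the predicted location of Step~2. By Step~1 one has $\nabla u_\varepsilon=V_\varepsilon+o(|V_\varepsilon|)$ uniformly on the annulus, so the straight-line homotopy between $\nabla u_\varepsilon$ and $V_\varepsilon$ has no zero on the boundary of a small ball around $x_\varepsilon^\star$; Brouwer degree invariance then shows that the two fields have the same (unique) zero in that ball. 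Combined with the total degree $-1$ on the full annulus coming from \eqref{I5}, this excludes any additional critical points and identifies the index of $x_\varepsilon$ with that of $V_\varepsilon$ at $x_\varepsilon^\star$. Diagonalising $D^2(-u_0(P)\phi_\varepsilon)$ at the radial point $x_\varepsilon^\star$ gives one negative eigenvalue in the radial direction and $N-1$ positive eigenvalues in the tangential directions, so the index is $(-1)^1=-1$ and $x_\varepsilon$ is a saddle.

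\emph{Main obstacle.}
The technical heart is Step~1: one needs a $C^1$ expansion whose remainder is genuinely sharper than the leading gradient on the shrinking annular region $|x-P|\sim \varepsilon^{(N-2)/(N-1)}$ (resp.\ $1/|\log\varepsilon|$), where the two competing contributions $\nabla u_0(P)$ and $u_0(P)\nabla\phi_\varepsilon$ balance at exactly the same order. Handling this requires matching the near-field Laurent expansion of $G(\cdot,P)$ (including its regular part $H$) with the nonlinear defect $f(u_\varepsilon)-f(u_0)=f'(u_0)(u_\varepsilon-u_0)+O((u_\varepsilon-u_0)^2)$, and closing the $C^1$ estimate by a bootstrap in $\varepsilon$ on the annulus whose inner radius shrinks to zero.
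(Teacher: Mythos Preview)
Your approach is essentially the paper's: the capacity potential $\phi_\varepsilon$ is the same object as the boundary--layer correction $I_\varepsilon+J_\varepsilon$ coming from the Green representation, and your model field $V_\varepsilon$ is the paper's rescaled limit field $F$. Step~2 is exactly Proposition~\ref{B10}, and the index computation via the eigenvalues of $\delta_{ij}-N\xi_i\xi_j/|\xi|^2$ is the paper's Lemma~\ref{G9}. So the strategy is right.

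The genuine gap is in Step~3. First, the estimate ``$\nabla u_\varepsilon=V_\varepsilon+o(|V_\varepsilon|)$ uniformly on the annulus'' cannot hold as written: $V_\varepsilon$ vanishes at $x_\varepsilon^\star$, so the bound degenerates exactly where you need it. What is actually true (and what the paper proves) is $\nabla u_\varepsilon=V_\varepsilon+o\big(1+\varepsilon^{N-2}|x-P|^{1-N}\big)$, i.e.\ the remainder is small relative to each \emph{summand} of $V_\varepsilon$, not to their sum. This is enough to force any zero of $\nabla u_\varepsilon$ into a shrinking neighbourhood of $x_\varepsilon^\star$ (the necessary condition), but not to conclude uniqueness. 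Second, your degree argument does not close: homotopy to $V_\varepsilon$ gives $\deg(\nabla u_\varepsilon,B(x_\varepsilon^\star,r),0)=-1$, and combining with the global $-1$ from \eqref{I5} only yields degree $0$ on the complement, which does \emph{not} exclude further zeros (a pair of indices $+1,-1$ would be consistent). What actually pins down uniqueness is that the rescaled field $y\mapsto\nabla u_\varepsilon(P+\varepsilon^{\frac{N-2}{N-1}}y)$ converges to $F$ in $C^1$ near $y_0$, so that the nondegeneracy of $y_0$ transfers to $u_\varepsilon$; this requires a \emph{second}-derivative estimate of $u_\varepsilon$ on the scale $|x-P|\sim\varepsilon^{\frac{N-2}{N-1}}$ (Proposition~\ref{B35} via Lemmas~\ref{B17}--\ref{B18}), which is the longest part of the paper and is not covered by the $C^1$ expansion you flag as the main obstacle. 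In short, your ``Main obstacle'' paragraph needs to be upgraded from $C^1$ to $C^2$, and the uniqueness should be concluded from nondegeneracy of the limiting field rather than from a degree count.
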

\begin{rem}\label{I12}
The condition that $P$ is not a critical point of $u_0$ cannot be removed. An easy counterexample can be constructed when $\O=B(0,1)$ and $u_0$ is the first eigenfunction of $-\Delta$ with zero Dirichlet boundary condition. If $P=0$ we have that $\O_\e=B(0,1)\setminus B(0,\e)$ and $u_\e$ is the first $radial$ eigenfunction in the annulus $\O_\e$. Of course $u_\e$ has infinitely many critical points in $B(P,d)\setminus B(P,\e)$ for any $\e>0$ small and $d\in(0,1)$.
\end{rem}
\begin{rem}\label{rem1.3}
Let us give an idea of the proof of Theorem \ref{I7}.
The key step is to derive sharp $C^2$ expansions of the solution $u_\e$  which improve \eqref{I15}. For $N\ge3$ our basic estimate near $\partial B(P,\e)$ is the following
\begin{equation}\label{aI13}
u_\varepsilon(x)=
u_0(x)-\frac{u_0(P)+o(1)}{|x-P|^{N-2}}\e^{N-2}+o(1).
\end{equation}
Note that near $\partial B(P,\e)$ there is an interaction between the weak limit $u_0$ and the fundamental solution of the Laplacian.

Another crucial result is to derive that $u_\e$ and $u_0(x)-\frac{u_0(P)}{|x-P|^{N-2}}\e^{N-2}$ are close in the $C^2$-topology  in $B(P,d)\setminus B(P,\e)$. Since the last function admits only one critical point which is also nondegenerate we get the uniqueness of the critical point for $u_\e$ in $B(P,d)\setminus B(P,\e)$.

Finally, again by \eqref{aI13} we get that the critical point of $u_\e(x)$ in $B(P,d)\setminus B(P,\e)$ can be founded as zero of the equation
$$0=\nabla u_0(x)+(N-2)\big(u_0(P)+o(1)\big)\frac{x-P}{|x-P|^N}\e^{N-2}+o(1),$$
which gives the formula \eqref{2019-11-27-03}. For $N=2$ similar computations occur.
%

\end{rem}

\begin{rem}
Even if $u_0$ has an isolated critical point $x_0$, we cannot exclude that there are critical points of $u_\e$ collapsing to $x_0$.
This will be rule out in next theorem assuming the nondegeneracy of the critical points of $u_0$.
\end{rem}
\begin{teo}\label{th1-1}
Suppose that $u_\e$ is a solution to \eqref{aa2} which  verifies \eqref{2019-11-25-46}.
Denoting by $u_0$ its weak limit we get that if $P$ satisfies \eqref{I1} and all critical points of $u_0$ are nondegenerate we have that
\begin{equation}\label{mainresult}
\sharp\{\hbox{critical points of $u_\e$ in $\O_\e$}\}=\sharp\{\hbox{critical points of $u_0$ in $\O$}\}+1.
\end{equation}
Finally the additional critical point $x_\e$ of $u_\e$ is a saddle point of index $-1$ which verifies \eqref{2019-11-27-03}.
\end{teo}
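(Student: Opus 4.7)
The plan is to split $\O_\e$ into a \emph{near} region $A_\e := B(P,d)\setminus B(P,\e)$ and a \emph{far} region $F := \O\setminus B(P,d)$, where $d>0$ is chosen as in Theorem \ref{I7} so that $B(P,d)\Sub\O$ contains no critical point of $u_0$. By Theorem \ref{I7}, for $\e$ small $u_\e$ has exactly one critical point $x_\e$ in $A_\e$, which is a saddle of index $-1$ obeying \eqref{2019-11-27-03}. It therefore suffices to show that, for $\e$ small, the critical points of $u_\e$ in $F$ are in bijective correspondence with the critical points of $u_0$ in $\O$.

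List the critical points $y_1,\dots,y_m$ of $u_0$; by the choice of $d$ they all lie in the interior of $F$, and by hypothesis each is nondegenerate. Around each $y_i$ pick a small closed ball $\ol B_i\sub F$, pairwise disjoint, on which $\det \n^2 u_0 \neq 0$. Since $y_i\in\O\setminus\{P\}$, the $C^2$ convergence in \eqref{I15} gives $\n u_\e \to \n u_0$ in $C^1(\ol B_i)$. Because $\n u_0(y_i)=0$ and $\n^2 u_0(y_i)$ is invertible, the quantitative inverse function theorem (equivalently, a small-perturbation degree argument applied to $\n u_\e$ on $B_i$) produces, for $\e$ small, a unique zero $y_i^\e\in B_i$ of $\n u_\e$, with $y_i^\e\to y_i$ and $\n^2 u_\e(y_i^\e)\to\n^2 u_0(y_i)$; in particular $y_i^\e$ is nondegenerate of the same Morse index as $y_i$.

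The main obstacle is ruling out \emph{extra} critical points of $u_\e$ in the remaining region $F\setminus\bigcup_i B_i$. This is handled in three parts. First, on the compact set $K:=\ol F\setminus \bigcup_i B_i$ (viewed inside $\ol\O\setminus\{P\}$ away from $\pa\O$) one has $|\n u_0|\ge c_0>0$ by continuity and the very choice of the $B_i$'s; the $C^1$-convergence of $u_\e$ to $u_0$ on compacts of $\O\setminus\{P\}$ then forces $|\n u_\e|\ge c_0/2$ there for $\e$ small, excluding critical points. Second, near $\pa\O$, the uniform $L^\infty$ bound \eqref{2019-11-25-46} together with standard boundary regularity and Hopf's lemma yield a uniform lower bound on $|\n u_\e|$ in a neighborhood of $\pa\O$ independent of $\e$, again excluding critical points. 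Third, on $\pa B(P,d)$ (viewed from $F$) the weak limit satisfies $|\n u_0|>0$ since $B(P,d)$ contains no critical point of $u_0$, and $C^1$-closeness extends this lower bound to $u_\e$.

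Combining the contributions: one saddle in $A_\e$ and exactly $m$ nondegenerate critical points in $F$, we obtain
$$\sharp\{\n u_\e=0\}\cap\O_\e=m+1=\sharp\{\n u_0=0\}\cap\O+1,$$
which is \eqref{mainresult}. The location and index of the additional critical point $x_\e$ have already been established in Theorem \ref{I7}. As a consistency check, Poincar\'e–Hopf gives $\sum index=(-1)^N\chi(\O_\e)=(-1)^N\chi(\O)-1$, matching the sum of the $m$ Morse indices (which equals $(-1)^N\chi(\O)$ by Poincar\'e–Hopf on $\O$) plus the saddle contribution $-1$ from $x_\e$.
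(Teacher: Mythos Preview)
Your proof is correct and follows essentially the same route as the paper: split $\O_\e$ into the annulus $B(P,d)\setminus B(P,\e)$ (handled by Theorem \ref{I7}), small balls around the critical points of $u_0$ (handled by $C^2$-convergence plus a degree/inverse-function argument, which is exactly the paper's Proposition \ref{prop3.1}), and the remaining compact region where $|\nabla u_0|$ is bounded below (handled by Lemma \ref{lemma-1}). Your explicit treatment of the boundary via Hopf's lemma and the Poincar\'e--Hopf consistency check are welcome additions, but the overall structure and ingredients coincide with the paper's proof.
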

Let us state some interesting situations where the previous theorem applies.

\begin{cor}\label{I11}
Assume that $\O$ is a symmetric domain with respect to the origin and convex in the directions $x_1,..,x_N$ and
suppose that $u_\e$ is a solution to \eqref{aa2} which  verifies \eqref{2019-11-25-46}.
Denoting by $u_0$ its weak limit we get that if $P\ne0$  we have that
\begin{equation*}
\sharp\{\hbox{critical points of $u_\e$ in $\O_\e$}\}=2,
\end{equation*}
and the additional critical point $x_\e$ of $u_\e$ is a saddle point of index $-1$ which verifies \eqref{2019-11-27-03}.
\end{cor}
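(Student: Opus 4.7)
The plan is to verify the hypotheses of Theorem~\ref{th1-1} and then apply it directly. Since $\O$ is symmetric with respect to the origin and convex in each coordinate direction, the classical Gidas--Ni--Nirenberg theorem applies to the weak limit $u_0$, which solves \eqref{1}. This yields that $u_0$ is even in each variable $x_i$ and strictly monotone, namely $\frac{\pa u_0}{\pa x_i}(x)<0$ whenever $x\in\O$ and $x_i>0$. It follows that the only critical point of $u_0$ in $\O$ is the origin: for any $x\ne 0$ some coordinate $x_i$ is non-zero, and the strict sign of $\pa u_0/\pa x_i$ (together with its oddness in $x_i$) prevents $\n u_0(x)$ from vanishing. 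Since $P\ne0$ by assumption, $P$ is not a critical point of $u_0$, so \eqref{I1} holds.

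Next I would check that the origin is a nondegenerate critical point of $u_0$. By the evenness of $u_0$ in each variable, the mixed partials $\pa^2u_0/\pa x_i\pa x_j(0)$ with $i\ne j$ vanish, so $D^2u_0(0)$ is diagonal. To handle the diagonal entries I set $w_i:=\pa u_0/\pa x_i$; then $w_i$ solves the linearised equation $-\D w_i=f'(u_0)w_i$ on the half-domain $\O\cap\{x_i>0\}$, vanishes on the flat face $\{x_i=0\}\cap\O$ by symmetry, and is strictly negative inside by Gidas--Ni--Nirenberg. Applying Hopf's boundary point lemma in a small half-ball $B(0,r)\cap\{x_i>0\}\sub\O$ whose flat face passes through the origin, I obtain $\frac{\pa w_i}{\pa x_i}(0)<0$, i.e.\ $\pa^2u_0/\pa x_i^2(0)<0$. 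Hence $D^2u_0(0)$ is negative definite and $0$ is nondegenerate.

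Once these two facts are in hand, all assumptions of Theorem~\ref{th1-1} are fulfilled, so \eqref{mainresult} gives
\begin{equation*}
\sharp\{\hbox{critical points of }u_\e\hbox{ in }\O_\e\}=\sharp\{\hbox{critical points of }u_0\hbox{ in }\O\}+1=1+1=2,
\end{equation*}
and the existence of the additional saddle point of index $-1$ together with its asymptotic location \eqref{2019-11-27-03} is read off directly from Theorem~\ref{th1-1}.

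The only delicate point in this plan is the nondegeneracy step at the origin: the Hopf lemma application must be carefully justified since the zeroth-order coefficient $-f'(u_0)$ has no a priori sign. This is standard inside the moving-plane framework (one works with $-w_i\ge0$, possibly after subtracting a multiple of $w_i$ to reduce to the non-positive coefficient case), and is exactly of the kind used in the proof of Gidas--Ni--Nirenberg itself.
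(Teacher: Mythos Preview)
Your proof is correct and follows exactly the paper's approach: invoke Gidas--Ni--Nirenberg to get that $0$ is the unique critical point of $u_0$, check nondegeneracy, and apply Theorem~\ref{th1-1}. The only difference is that the paper dispatches the nondegeneracy of the origin by citing \cite{G2002}, whereas you supply the direct Hopf-lemma argument on $w_i=\partial u_0/\partial x_i$; your version of Hopf (max value equal to zero, arbitrary sign of $c$) is precisely the one needed, so the caveat in your last paragraph is not an actual obstacle.
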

\begin{proof}[Proof of Corollary \ref{I11}]
Since $u_0$ is a solution to \eqref{1} by the Gidas, Ni and Nirenberg Theorem we have that $0$ is  the unique  critical point to $u_0$. Moreover it is nondegenerate, as pointed out in \cite{G2002}. Then the claim follows by Theorem \ref{th1-1}.
\end{proof}
The previous corollary holds for the first eigenfunction of $ -\D$. If $\O_\e$ is a ball with a small hole we have a complete description of number of the critical points.
\begin{cor}
Assume that $\O_\e$ is the annular domain $B(0,1)\setminus B(P,\e)$ and $\phi_{1,\e}$  is the first eigenfunction of $ -\D$ in $B(0,1)\setminus B(P,\e)$. Then we have that
\begin{equation*}
\sharp\{\hbox{critical points of $\phi_{1,\e}$ in $\O_\e$}\}=
\begin{cases}
\infty~&\mbox{if}\ P=0,\\
2~&\mbox{if}\ P\ne0,
\end{cases}
\end{equation*}
for $\e$ small enough.
\end{cor}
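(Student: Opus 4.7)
The plan is to split into the two cases $P=0$ and $P\neq 0$; the first is handled by a direct symmetry argument and the second by reduction to Corollary \ref{I11}.

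When $P=0$, the domain $\O_\e=B(0,1)\setminus B(0,\e)$ is a rotationally symmetric annulus. Since the first Dirichlet eigenvalue is simple, $\phi_{1,\e}$ must be invariant under every rotation fixing the origin, so $\phi_{1,\e}(x)=\psi_\e(|x|)$ with $\psi_\e(\e)=\psi_\e(1)=0$ and $\psi_\e>0$ on $(\e,1)$. Then $\psi_\e$ attains its maximum at some $r_\e\in(\e,1)$, and the whole sphere $\{|x|=r_\e\}$ consists of critical points of $\phi_{1,\e}$. Because $N\ge2$, this is an infinite set.

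For $P\neq 0$, I would normalize $\|\phi_{1,\e}\|_{L^\infty(\O_\e)}=1$ so that \eqref{2019-11-25-46} holds with $C=1$, and view $\phi_{1,\e}$ as a solution of \eqref{aa2} with nonlinearity $f_\e(u)=\lambda_{1,\e}u$. Classical eigenvalue domain-perturbation (using that a point has zero $H^1$-capacity for $N\ge2$) yields $\lambda_{1,\e}\to\lambda_1(B(0,1))$, hence along a subsequence the weak limit $u_0$ given by \eqref{I15} is a positive eigenfunction of $-\Delta$ on $B(0,1)$ with eigenvalue $\lambda_1$, which by simplicity and normalization is the first Dirichlet eigenfunction of the unit ball. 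The Gidas--Ni--Nirenberg theorem (with the nondegeneracy of \cite{G2002}) shows $u_0$ is radial and strictly decreasing in $|x|$, so its unique, nondegenerate critical point is the origin, which differs from $P$. Hence \eqref{I1} holds, and Corollary \ref{I11} (or directly Theorem \ref{th1-1}) yields exactly one additional critical point in $B(P,d)\setminus B(P,\e)$, for a total of $2$ critical points of $\phi_{1,\e}$.

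The only delicate point is the mild $\e$-dependence of $f_\e$ through $\lambda_{1,\e}$, which formally places this situation outside the literal scope of Theorems \ref{I7} and \ref{th1-1}. However, as outlined in Remark \ref{rem1.3}, the sharp $C^2$ expansion \eqref{aI13} near $\partial B(P,\e)$ is driven by Green-function asymptotics for the Laplacian and depends on the nonlinearity only through the smooth convergence $u_\e\to u_0$ in $C^2_{\mathrm{loc}}(\O\setminus\{P\})$; since $\lambda_{1,\e}\to\lambda_1$, the proofs carry over without change. This is the step that must be verified carefully, but no essentially new idea is needed.
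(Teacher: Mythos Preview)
Your argument is correct and matches the paper's implicit approach: the case $P=0$ is precisely the radial-symmetry observation of Remark \ref{I12}, and for $P\ne0$ you reduce to Corollary \ref{I11} exactly as the paper intends (the paper does not spell out a separate proof for this corollary). Your additional care about the $\e$-dependence of $\lambda_{1,\e}$ is a technical point the paper does not address; you are right that since $\lambda_{1,\e}\to\lambda_1$ and the estimates of Sections \ref{s3}--\ref{s4} rely only on the uniform bound \eqref{2019-11-25-46} and the $C^2_{\mathrm{loc}}$ convergence $u_\e\to u_0$, the proofs go through without modification.
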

Other examples where Theorem \ref{th1-1} applies  getting the existence of exactly $two$ critical points for the solution $u_\e$ in $\O_\e$ will be given in Section \ref{s6}.
\vskip 0.2cm
Next we consider the case $\nabla u_0(P)=0$. Here it is more complicated to prove results on the exact number of the critical points of $u_\e$.
As noted in Remark \ref{I12} it is even possible to have {\em infinitely many} critical points. Moreover, formula  \eqref{I5} becomes
 \begin{equation}\label{I20}
\sum_{x_i \in B(P,d)\setminus B(P,\e)} index_{x_i}(\nabla u_\e)=index_P(\nabla u_0)-1,
\end{equation}
where  $B(P,d)\subset\O$ is chosen not containing any critical  point of $u_0$. Hence the number of critical points of $u_\e$ in a neighborhood of $P$ is strongly depending of the index of $\nabla u_0$ at $P$. In particular, if $P$ is a maximum point for $u_0$ then \eqref{I20} becomes
  \begin{equation*}
\sum_{x_i \in B(P,d)\setminus B(P,\e)} index_{x_i}(\nabla u_\e)=(-1)^N-1,
\end{equation*}

 This case will be handled using analogous estimates to get asymptotic for $u_\e$ and its derivatives. Unfortunately some technical problems occur and for this we need an additional technical assumption.
\vskip 0.2cm
 \emph{Suppose that $u_\varepsilon$ and $u_0$ verify
\begin{equation}\label{2019-12-26-01}
\begin{split}
\int_{\Omega_\varepsilon}&\Big(f\big(u_\varepsilon(y)\big)-f\big(u_0(y)\big)\Big)
\frac{\partial G(x,y)}{\partial x_i}dy=
o\big(|x-P|\big)+
\begin{cases}
o\Big(\frac{\varepsilon^{N-2}}{|x-P|^{N-1}}\Big)&~\mbox{for}~N\geq 3,\\
o\Big( \frac{1}{|x-P| \cdot|\log \varepsilon|}\Big)&~\mbox{for}~N=2, \end{cases}
\end{split}
\end{equation}
when $|x-P|\rightarrow0$ and $\frac{|x-P|}{\varepsilon}\rightarrow +\infty$. Here $G(x,y)$ is the Green  function of $-\Delta$ in $\Omega$ with zero Dirichlet boundary condition.}

This assumption basically involves the rate of $\nabla(u_\e-u_0)$ near the critical point $P$. Some cases where it is verified are the following.
\begin{itemize}
\item  $f(s)\equiv1$.
\item $\O$ convex and symmetric with respect to $P$ as in the Gidas, Ni and Nirenberg Theorem.
(see Section \ref{s6}).
\end{itemize}
\vskip 0.2cm
Now let us state our main results.
Denote by $\textbf{H}(P)$ the Hessian matrix of $u_0$ at $P$ and suppose that $P$ is  a $nondegenerate$ critical point. Then
 denote by $m$ the number of its negative eigenvalues.  Our main result is the following.
\begin{teo}\label{th1-2}
Suppose that $u_\e$ is a solution to \eqref{aa2} which  verifies \eqref{2019-11-25-46}.
Denoting by $u_0$ its weak limit we get that if \eqref{2019-12-26-01} holds  and $P$ is a nondegenerate  critical point of $u_0$, for small $\varepsilon$ we have following results.
\vskip 0.2cm
\noindent \textup{(1)}  If  $\la_1<\la_2<..<\la_m<0$ are
the negative eigenvalues of $\textbf{H}(P)$ and they are simple we have that
\begin{equation}\label{adda2019-11-27-07}
\sharp\{\hbox{critical points of $u_\e$ in }\O_\e \}=\sharp\{\hbox{critical points of $u_0$  in }\O\}+2m-1.
\end{equation}
Moreover the additional critical points $x_{1,\e}^+,x_{1,\e}^-,..,x_{m,\e}^+,x_{m,\e}^-$ satisfy for $i=1,..,m$,
\begin{equation*}
u_\e(x_{i,\e}^\pm)\to u_0(P) \quad\hbox{for any }i=1,..,m
\end{equation*}
and
\begin{equation}\label{I21}
x_{j(i),\e}^\pm=P\pm
\begin{cases}
\left(\frac{(2-N)u_0(P)}{\la_i}+o(1)\right)^\frac1N\e^\frac{N-2}Nv_i&\hbox{if }N\ge3,\\
\sqrt{-\frac{u_0(P)}{\la_i}+o(1)}\frac1{\sqrt{|\log\e|}}v_i&\hbox{if }N=2,
\end{cases}
\end{equation}
with $v_i$ the $i$-th   eigenfunction associated to $\la_i$ and $j(1),..,j(m)$ is a permutation of indices $1,..,m$.
\vskip 0.2cm
\noindent \textup{(2)} If at least one negative eigenvalue of $\textbf{H}(P)$ is multiple, then
\begin{equation*}
\sharp\{\hbox{critical points of }u_\e\}\ge\sharp\{\hbox{critical points of }u_0\}+2m-1.
\end{equation*}
\end{teo}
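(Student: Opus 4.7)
The approach follows the scheme outlined in Remark \ref{rem1.3} but adapted to the case $\nabla u_0(P)=0$, where the critical point equation becomes \emph{quadratic} in $x-P$ at leading order instead of being dominated by $\nabla u_0$. The first step is to upgrade the convergence \eqref{I15} to a sharp $C^1$ expansion near $P$. Starting from the Green function representation $u_\varepsilon(x)=\int_{\Omega_\varepsilon}G_\varepsilon(x,y)f(u_\varepsilon(y))\,dy$, identifying the leading contribution of the removed ball $B(P,\varepsilon)$ with the multiple $u_0(P)|x-P|^{2-N}\varepsilon^{N-2}$ of the fundamental solution of $-\Delta$ in $\R^N$ (and its logarithmic analogue for $N=2$), and using assumption \eqref{2019-12-26-01} to absorb the nonlinear remainder, I would prove
\begin{equation*}
\nabla u_\varepsilon(x)=\nabla u_0(x)+(N-2)u_0(P)\fr{x-P}{|x-P|^N}\varepsilon^{N-2}+R_\varepsilon(x),
\end{equation*}
with $R_\varepsilon(x)=o(|x-P|)+o\big(\varepsilon^{N-2}/|x-P|^{N-1}\big)$ for $\varepsilon\ll|x-P|\le d$.

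Since $P$ is nondegenerate for $u_0$, the Taylor expansion $\nabla u_0(x)=\textbf{H}(P)(x-P)+O(|x-P|^2)$ reduces the equation $\nabla u_\varepsilon=0$, up to negligible terms, to
\begin{equation*}
\textbf{H}(P)(x-P)+(N-2)u_0(P)\fr{x-P}{|x-P|^N}\varepsilon^{N-2}=0.
\end{equation*}
Under the hypothesis of part (1), diagonalizing $\textbf{H}(P)$ in an orthonormal eigenbasis $\{v_1,\ldots,v_N\}$ and seeking solutions of the form $x-P=\rho v_i$ reduces this to the scalar equation $\lambda_i+(N-2)u_0(P)\varepsilon^{N-2}\rho^{-N}=0$, which has a positive solution precisely when $\lambda_i<0$; it produces $\rho_i=\big(-(N-2)u_0(P)/\lambda_i\big)^{1/N}\varepsilon^{(N-2)/N}$ and the $2m$ candidates $P\pm\rho_iv_i$ of \eqref{I21} (after a suitable relabeling $j(i)$). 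Each candidate is upgraded to a genuine isolated critical point by rescaling and applying the implicit function theorem (or Brouwer degree on a shrinking ball); a direct computation of the leading Hessian $N\lambda_i v_iv_i^T+\sum_{j\ne i}(\lambda_j-\lambda_i)v_jv_j^T$ shows nondegeneracy with Morse index $i$ at each $P\pm\rho_iv_i$.

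To conclude \eqref{adda2019-11-27-07}, I would rule out other critical points by splitting $B(P,d)\setminus B(P,\varepsilon)$ into three scales: the inner shell $|x-P|\le K\varepsilon$, where rescaling reduces the problem to an exterior linear equation whose zeros are already captured; the intermediate annulus $K\varepsilon\le|x-P|\le\delta$, where the two explicit terms dominate and produce exactly the $2m$ zeros above; and the outer region $\delta\le|x-P|\le d$, where $C^2$-convergence away from $P$ excludes further critical points. Outside $B(P,d)$ the nondegeneracy of the critical points of $u_0$ gives a bijection with those of $u_\varepsilon$ exactly as in Theorem \ref{th1-1}, and the Poincar\'e--Hopf identity $2\sum_{i=1}^m(-1)^i=(-1)^m-1$ from \eqref{I20} is a consistency check on the count. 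For part (2), the previous reduction degenerates: a negative eigenvalue of multiplicity $k\ge 2$ makes the leading-order equation in the corresponding eigenspace hold on an entire $(k-1)$-sphere of directions. The lower bound $2m-1$ would then be obtained by perturbing $\textbf{H}(P)$ into matrices with simple negative eigenvalues, invoking part (1), and exploiting the topological stability of the Brouwer degree on the annulus. The hardest step throughout is the sharp $C^1$ expansion on the critical scale $|x-P|\sim\varepsilon^{(N-2)/N}$: at precisely that scale the two explicit terms balance and generate the new critical points, so $R_\varepsilon$ must be strictly smaller than \emph{both}, which is what assumption \eqref{2019-12-26-01} is tailored to ensure.
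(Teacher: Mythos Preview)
Your approach to part (1) coincides with the paper's: the sharp $C^1$ expansion (under hypothesis \eqref{2019-12-26-01}) is established exactly as you outline, then rescaled to a limit vector field $\widehat F(y)=\textbf{H}(P)y+(N-2)u_0(P)\frac{y}{|y|^N}$ whose zeros are precisely the $2m$ points $\pm r_jv_j$; the Jacobian computation you give (eigenvalues $N\lambda_i$ and $\lambda_j-\lambda_i$ for $j\ne i$) matches the paper's matrix $\textbf{M}$, and the implicit-function/degree argument lifting each zero to a unique critical point of $u_\varepsilon$ is the same.

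Where you diverge from the paper is in part (2). The paper does \emph{not} use perturbation and degree stability; instead it passes to coordinates diagonalizing $\textbf{H}(P)$ and, for each negative eigenvalue $\lambda_j$, restricts the expansion of $\nabla u_\varepsilon$ to the $j$-th coordinate axis, where $\frac{\partial u_\varepsilon}{\partial y_j}$ changes sign between $(1-b)r_j\varepsilon^{(N-2)/N}$ and $(1+b)r_j\varepsilon^{(N-2)/N}$, producing a zero by the intermediate value theorem (and likewise on the negative half-axis). Your proposed route has a genuine gap: as your own Poincar\'e--Hopf check $2\sum_{i=1}^m(-1)^i=(-1)^m-1$ shows, the total Brouwer degree of $\widehat F$ on the annulus is $0$ or $-2$, so ``degree stability'' yields at most two critical points, not $2m$. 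Perturbing $\textbf{H}(P)$ to simple eigenvalues does produce $2m$ nondegenerate zeros for the \emph{perturbed} field, but since those zeros carry alternating indices $(-1)^i$ they cancel in pairs under degree, and nothing in a pure degree argument prevents them from merging and disappearing in the unperturbed limit. The elementary one-dimensional sign-change argument along each eigendirection is what bypasses this obstruction.
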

\begin{rem}
The previous result tells us that for any direction outgoing from $P$ where $u_0$ is decreasing generates a pair of critical points. It is worth to note that if $P$ is a nondegenerate  minimum point for $u_0$(see Corollary \ref{C11}) then
\begin{equation*}
\sharp\{\hbox{critical points of }u_\e\}=\sharp\{\hbox{critical points of }u_0\}-1.
\end{equation*}
Finally radial solutions to \eqref{1} when $\O_\e$ is an annulus fall in case $(2)$ above.

\end{rem}
Next corollary computes the number of critical points of $u_\e$ for convex and symmetric domains.
\begin{cor}
Assume that $P=0$, $\O$ and $f$ are like in Gidas, Ni and Nirenberg Theorem. Then if all the eigenvalues of $H(P)$ are simple then for $\e$ small enough we have that
\begin{equation*}
\sharp\{\hbox{critical points of $u_\e$ in }\O_\e \}=2N
\end{equation*}
and $P_{1,\e}^+,P_{1,\e}^-,..,P_{N,\e}^+,P_{N,\e}^-$ satisfy \eqref{I21}.
\end{cor}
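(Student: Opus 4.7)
The plan is to derive this as a direct specialization of Theorem \ref{th1-2}(1) applied at $P=0$. The three hypotheses I need to verify are: (a) the origin is the unique, nondegenerate critical point of the weak limit $u_0$; (b) all $N$ eigenvalues of $\mathbf{H}(0)$ are negative, simple, and have eigenvectors along the coordinate axes; and (c) the technical assumption \eqref{2019-12-26-01}.

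For (a) and (b) I would invoke the Gidas--Ni--Nirenberg symmetry theorem. Since $\O$ and $f$ are in the GNN class, $u_0$ is even in each coordinate $x_i$ and is strictly decreasing in $|x_i|$ for every $i$, so $0$ is its unique critical point and is a strict maximum; nondegeneracy at this point is the content of \cite{G2002}, already used in the proof of Corollary \ref{I11}. Moreover, the evenness of $u_0$ in each variable $x_j$ makes $\pa_j u_0$ odd in $x_j$, whence $\pa_{ij}u_0(0)=0$ for all $i\ne j$. Therefore $\mathbf{H}(0)=\mathrm{diag}(\la_1,\dots,\la_N)$ with $\la_i=\pa_{ii}u_0(0)<0$ and with eigenvectors $v_i=e_i$ the standard basis. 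The simplicity hypothesis of the corollary says precisely that the $\la_i$ are pairwise distinct, so in the notation of Theorem \ref{th1-2} we have $m=N$.

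For (c), the paper explicitly remarks after the formulation of \eqref{2019-12-26-01} that this condition is satisfied under the GNN hypotheses with $P$ the centre of symmetry (the detailed verification being placed in Section \ref{s6}), and since the present setting is exactly that one, no further work is required here.

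With (a)--(c) in hand, Theorem \ref{th1-2}(1) yields
\begin{equation*}
\sharp\{\hbox{critical points of }u_\e\}=\sharp\{\hbox{critical points of }u_0\}+2m-1=1+2N-1=2N,
\end{equation*}
and specializing \eqref{I21} to $v_i=e_i$ gives the asserted asymptotic locations of $P_{1,\e}^\pm,\dots,P_{N,\e}^\pm$. I do not foresee a genuine obstacle: the only substantive point is the diagonalization of $\mathbf{H}(0)$ in the canonical basis, which is immediate from the $x_j$-evenness of $u_0$. The rest is simply bookkeeping in the conclusion of Theorem \ref{th1-2}(1).
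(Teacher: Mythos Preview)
Your proposal is correct and follows essentially the same approach as the paper's proof, which is extremely terse: the paper simply notes that \eqref{2019-12-26-01} holds (referring to Section \ref{s6}), that GNN gives $u_0$ a unique critical point, and then invokes \eqref{adda2019-11-27-07}. Your version fills in the details the paper leaves implicit---in particular the nondegeneracy via \cite{G2002}, the fact that $m=N$ because $0$ is a strict maximum, and the diagonalization $\mathbf{H}(0)=\mathrm{diag}(\la_1,\dots,\la_N)$ with $v_i=e_i$, which makes the formula \eqref{I21} explicit---but the logical skeleton is identical.
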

\begin{proof}
Since \eqref{2019-12-26-01} holds (see Section \ref{D3}) and by the Gidas, Ni and Nirenberg theorem $u_0$ has a unique critical point, the claim follows by \eqref{adda2019-11-27-07}.
\end{proof}
A consequence of the previous result is the location of maxima of radial solutions in annuli with shrinking hole.
\begin{cor}\label{cor1.7}
Let $\O_\e$ be the annulus $B(0,1)\setminus B(0,\e)$, $u_\e$ a radial solution to \eqref{aa2} and  $u_0$ its weak limit. Assume that $f(s)>0$  for $s>0$ and set $r=|x|$ and $u_\e=u_\e(r)$. We have that for $\e>0$ small enough $u_\e(r)$ has a unique critical point $r=r_\e$ given by
\begin{equation*}
r_\e=
 \begin{cases}
\left[\left(\frac{N(N-2)u_0(0)}{f\big(u_0(0)\big)}\right)^\frac1N+o(1)\right]\e^\frac{N-2}N&~\mbox{if}~N\geq 3,\\
\left(\sqrt{\frac{u_0(0)}{2f\big(u_0(0)\big)}}+o(1)\right)\frac1{\sqrt{|\log\e|}}&~\mbox{if}~N=2.
\end{cases}
\end{equation*}
\end{cor}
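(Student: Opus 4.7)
The plan is to exploit the radial symmetry of $u_\e$ to reduce the counting question to a one-dimensional ODE, and then to read off the asymptotic location of the unique critical radius $r_\e$ from the expansions underlying Theorem~\ref{th1-2} applied at $P=0$.

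For the uniqueness part, I would rewrite the radial equation in divergence form,
$$\bigl(r^{N-1}u_\e'(r)\bigr)'=-r^{N-1}f\bigl(u_\e(r)\bigr),\qquad r\in(\e,1).$$
Since $u_\e>0$ on $(\e,1)$ and $f(s)>0$ for $s>0$, the right hand side is strictly negative, so $r\mapsto r^{N-1}u_\e'(r)$ is strictly decreasing on $(\e,1)$. Because $u_\e$ vanishes at both endpoints and is positive in between, $u_\e$ attains an interior maximum, giving at least one zero of $u_\e'$; strict monotonicity of $r^{N-1}u_\e'(r)$ then provides at most one such zero. Hence $u_\e'$ vanishes exactly once in $(\e,1)$, at the claimed $r_\e$.

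To determine $r_\e$, I would apply Theorem~\ref{th1-2} at $P=0$. The weak limit $u_0$ is a positive radial solution of \eqref{1} in $B(0,1)$, so $u_0$ has $0$ as its unique critical point. Evaluating the radial equation at $r=0$ gives $u_0''(0)=-f(u_0(0))/N$, so
$$\textbf{H}(0)=u_0''(0)\,I=-\frac{f(u_0(0))}{N}\,I,$$
which is nondegenerate with a single eigenvalue $\lambda=-f(u_0(0))/N$ of multiplicity $N$. Assumption \eqref{2019-12-26-01} is automatic because $\Omega=B(0,1)$ is convex and symmetric with respect to $0$, listed as a sufficient condition before Theorem~\ref{th1-2}. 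Substituting $\lambda=-f(u_0(0))/N$ into \eqref{I21} then yields, for $N\ge 3$, the value $r_\e=[(N(N-2)u_0(0)/f(u_0(0)))^{1/N}+o(1)]\e^{(N-2)/N}$, and the $N=2$ branch of \eqref{I21} produces the corresponding logarithmic asymptotic.

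The main obstacle is that the eigenvalue of $\textbf{H}(0)$ is $N$-fold degenerate, so part (1) of Theorem~\ref{th1-2} does not literally apply and only the lower bound in part (2) is available. Radial symmetry bypasses this issue: Step~1 has already shown that the critical set of $u_\e$ in $\Omega_\e$ is a single sphere $\{|x|=r_\e\}$, and to identify its radius it suffices to locate the unique small zero of the radial component of
$$\nabla u_0(x)+(N-2)\bigl(u_0(0)+o(1)\bigr)\frac{x}{|x|^{N}}\e^{N-2}+o(1)=0$$
from Remark~\ref{rem1.3} (with the analogous equation in dimension two). A direct inversion of this scalar equation in $r=|x|$ then produces the stated formulas for $r_\e$.
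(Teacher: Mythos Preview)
Your approach matches the paper's: uniqueness from the strict monotonicity of $r^{N-1}u_\e'(r)$, then the location of $r_\e$ by balancing the Hessian term against the singular term in the gradient expansion near $P=0$. The one imprecision is in the final step. The equation you quote from Remark~\ref{rem1.3} carries an additive $o(1)$ error, and since $\nabla u_0(x)=\textbf{H}(0)x+o(|x|)$ is itself $o(1)$ as $x\to0$, that error swamps the leading term and you cannot solve for $r_\e$ from it. The relevant expansion is \eqref{C10}, which under assumption \eqref{2019-12-26-01} (which you correctly checked) sharpens the remainder to $o(|x-P|)$. The paper's proof invokes \eqref{C10} directly in radial form,
\[
\frac{u_\e'(r)}{r}=u_0''(0)+o(1)+(N-2)\big(u_0(0)+o(1)\big)\frac{\e^{N-2}}{r^{N}}\qquad (N\ge3),
\]
with the analogous expression for $N=2$, and then sets this to zero. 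Since you already identified the need for \eqref{2019-12-26-01} and referred to ``the expansions underlying Theorem~\ref{th1-2}'', the fix is simply to cite \eqref{C10} rather than Remark~\ref{rem1.3}.
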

\vskip0.2cm
\begin{rem}
Both Theorems \ref{th1-1} and  \ref{th1-2} can be iterated to handle  the case in which $k$  small holes are removed from $\O$.
\end{rem}

The paper is organized as follows.  In Section \ref{s1} we recall some properties of the Green function. In Section \ref{s2} we split our solution $u_\e$ in different parts which will be estimated
in the next sections. Section  \ref{s3} contains some technical computations which allow  (in Section  \ref{s4}) to give the estimate of $u_\e$, $\nabla u_\e$ and $\nabla^2 u_\e$.
Section \ref{s5} is devoted to the proof of Theorems \ref{I7} and \ref{th1-1}.
In   Section \ref{s7} we give the main theorems when $\nabla u_0(P)=0$. Finally in Section \ref{s6} we give some applications and extensions of the previous theorems.

\vfill\eject

\section{Properties of the Green function}\label{s1}
In this section we collect some properties of the Green function which play a crucial role in the paper.
First we recall that, for $(x,y)\in\O\times\O$, $x\ne y$, the Green function $G(x,y)$ verifies
\begin{equation*}
\begin{cases}
-\D_x G(x,y)=\delta(y)&\hbox{in }\O,\\
G(x,y)=0&\hbox{on }\partial\O,
\end{cases}
\end{equation*}
in the sense of distribution. Next we recall the classical representation formula,
\begin{equation}\label{G}
G(x,y)=S(x,y)+H(x,y),
\end{equation}
where $S(x,y)$ is the classical {\em fundamental solution} given by
\begin{equation*}
S(x,y)=
\begin{cases}
-\frac{1}{2\pi}\log \big|x-y\big|&~\mbox{if}~N=2,\\
\frac{1}{N(N-2)\omega_N}\frac{1}{|x-y|^{N-2}} &~\mbox{if}~N\geq 3,
\end{cases}
\end{equation*}
with $\omega_N$ the volume of the unit ball $\R^N$ and $H(x,y)$ is the {\em regular part of the Green function}.\\
Since in the paper we need to consider the Green function in different domains, we denote by $G_U(x,y)$  as  the Green function on $U$. Next result is also classical.
\begin{teo}[Green's representation formula]
If $u\in C^2(\bar U)$, then it holds
\begin{equation}\label{grf}
u(x)=-\int_{\partial U}u(y)\frac{\partial G_U(x,y)}{\partial \nu_y}d\sigma(y)
-\int_{U} \Delta u(y) G_U(x,y)dy~\,~\mbox{for}~x\in U,
\end{equation}
where $\nu_y$ is the outer normal vector on $\partial U$.
\end{teo}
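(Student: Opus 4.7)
The plan is to apply Green's second identity to $u$ and $v(y) := G_U(x,y)$ for a fixed $x \in U$. The immediate obstruction is that $v$ has a singularity at $y=x$ matching the fundamental solution $S(x,y)$, so the identity cannot be invoked on the whole of $U$. The standard fix is to excise a small ball $B(x,\rho)$ and pass to the limit as $\rho \to 0^+$.

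Concretely, on $U_\rho := U \setminus \overline{B(x,\rho)}$ the function $v$ is classically harmonic in $y$ (the Dirac mass sits outside $\overline{U_\rho}$), so Green's second identity becomes
\begin{equation*}
-\int_{U_\rho} G_U(x,y)\Delta u(y)\,dy = \int_{\partial U} u(y)\frac{\partial G_U(x,y)}{\partial \nu_y}\,d\sigma(y) + \int_{\partial B(x,\rho)} \Bigl(u(y)\frac{\partial v}{\partial \nu_y} - v(y)\frac{\partial u}{\partial \nu_y}\Bigr)\,d\sigma(y),
\end{equation*}
where $\nu_y$ is the outer unit normal to $U_\rho$. Since $G_U$ vanishes on $\partial U$, the boundary integral over $\partial U$ is already the first term in \eqref{grf}, so the task reduces to tracking the contributions from $\partial B(x,\rho)$ as $\rho\to 0$.

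Using the decomposition \eqref{G}, $G_U=S+H$, the regular part $H$ is smooth near $x$ and therefore contributes $O(\rho^{N-1})$ to both boundary integrals, vanishing in the limit. For the singular part, a direct computation shows that on $\partial B(x,\rho)$ the outer normal $\nu_y$ to $U_\rho$ points toward $x$ and $\frac{\partial S(x,y)}{\partial \nu_y} = \frac{1}{N\omega_N \rho^{N-1}}$, so
\begin{equation*}
\int_{\partial B(x,\rho)} u(y)\frac{\partial v}{\partial \nu_y}\,d\sigma(y) = \frac{1}{N\omega_N \rho^{N-1}}\int_{\partial B(x,\rho)} u(y)\,d\sigma(y) + o(1) \;\longrightarrow\; u(x)
\end{equation*}
by continuity of $u$. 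The conjugate term is $O(\rho)$ for $N\geq 3$ and $O(\rho|\log\rho|)$ for $N=2$, since $v$ has an integrable singularity and $\nabla u$ is bounded. Finally, the volume integral on the left passes to the limit by integrability of $G_U(x,\cdot)$ at $y=x$, and rearranging the terms yields \eqref{grf}.

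The only delicate point is verifying that the singular boundary integral reproduces exactly $u(x)$; this is the classical mean value computation for the fundamental solution of $-\Delta$, and nothing beyond it is really needed. All other steps are bookkeeping based on the decomposition \eqref{G} and the boundary condition $G_U=0$ on $\partial U$.
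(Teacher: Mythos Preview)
Your proof is correct and is precisely the standard argument found in Gilbarg--Trudinger, which the paper simply cites rather than reproducing. Nothing is missing; the excision-and-limit procedure you describe, together with the computation of $\partial S/\partial \nu_y$ on $\partial B(x,\rho)$, is exactly how formula (2.21) on page 19 of \cite{GT1983} is derived.
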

\begin{proof}
This can be found at page 19 of \cite{GT1983}.
\end{proof}
Let us denote by $G_0(w,z)$ the {\em Green function} of $\R^N\backslash B(0,1)$ given by (see \cite{BF1996})
\begin{equation}\label{new1}
G_0(w,z)=
\begin{cases}
-\frac{1}{2\pi}\left(\log \big|{w-z}\big|-\log \big||w|z-\frac{w}{|w|}\big|\right)&~\mbox{if}~N=2,\\
\frac{1}{N(N-2)\omega_N}\left(\frac{1}{|w-z|^{N-2}}-\frac{1}{\big||w|z-\frac{w}{|w|}\big|^{N-2}}\right) &~\mbox{if}~N\geq 3.
\end{cases}
\end{equation}
 We have the following computations.
\begin{lemma}\label{G1}
We have that,
 \begin{equation}\label{daaa2019-11-02-10}
\frac{\partial G_0(w,z)}{\partial \nu_z}=
\frac{1}{N\omega_N} \frac{1-|w|^2}{|w-z|^N},~\mbox{for}~|w|>1,~|z|=1~\mbox{and}~\nu_z=-z,
\end{equation}
\begin{equation}\label{2019-12-09-01}
\begin{split}
\frac{\partial G_0(w,z)}{\partial w_i}=O\Big(\frac{|z|}{|w-z|^{N-1}}\Big),~\mbox{for}~|w|,|z|>1,
\end{split}
\end{equation}
 \begin{equation}\label{2019-12-09-02}
\begin{split}
  \frac{\partial G_0(w,z)}{\partial w_i}
=&-\frac{1}{N\omega_N} \frac{w_i-z_i}{|w-z|^N} +O\Big(\frac{1}{|w|^{N-1}\cdot|z|^{N-2}}\Big),~\mbox{for}~|w|\to \infty,|z|>1.
\end{split}
\end{equation}
For any $\phi\in C^2\big(\overline{B(0,1)}\big)$ it holds
\begin{equation}\label{G2}
\phi(s)=\frac{1}{N\omega_N}\int_{\partial B(0,1)} \frac{1-|s|^2}{|s-y|^N}\phi(y)d\sigma(y)
-\int_{B(0,1)} \Delta \phi(y) G_0(s,y)dy.
\end{equation}
\end{lemma}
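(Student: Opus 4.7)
The plan is to derive every assertion directly from the explicit formula \eqref{new1}, the main algebraic tools being the identity
$$\bigl||w|z-w/|w|\bigr|^2 \;=\; |w|^2|z|^2 - 2\langle w,z\rangle +1 \;=\; |w-z|^2 + (|w|^2-1)(|z|^2-1),$$
which shows that $|a|:=\bigl||w|z-w/|w|\bigr|$ reduces to $|w-z|$ on $|z|=1$ or $|w|=1$ (so $G_0$ vanishes on the unit sphere), and the companion identity
$$\bigl||z|^2 w - z\bigr|^2 \;=\; |z|^2\bigl(|w|^2|z|^2-2\langle w,z\rangle+1\bigr) \;=\; |z|^2|a|^2,$$
which, as we shall see, yields the gradient estimates almost for free. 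To establish \eqref{daaa2019-11-02-10} we differentiate \eqref{new1} in $z$: the two summands produce $c_N(N-2)(w_i-z_i)/|w-z|^N$ and $c_N(N-2)(|w|^2z_i-w_i)/|a|^N$ respectively, where $c_N=1/(N(N-2)\omega_N)$. On $|z|=1$ the first identity forces $|a|=|w-z|$, so the two denominators coincide and the numerators add to $(|w|^2-1)z_i$, giving
$$\nabla_z G_0\big|_{|z|=1}=\frac{1}{N\omega_N}\cdot\frac{(|w|^2-1)z}{|w-z|^N}.$$
Dotting with $\nu_z=-z$ and using $|z|=1$ produces \eqref{daaa2019-11-02-10}.

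For the derivatives \eqref{2019-12-09-01} and \eqref{2019-12-09-02} we exploit the symmetry $G_0(w,z)=G_0(z,w)$, which is immediate from the symmetric form of $|a|^2$, to write
$$\frac{\partial G_0}{\partial w_i}(w,z)=\frac{1}{N\omega_N}\left[\frac{z_i-w_i}{|w-z|^N}+\frac{|z|^2 w_i-z_i}{|a|^N}\right].$$
The first summand on the right-hand side is exactly the explicit leading term isolated in \eqref{2019-12-09-02}, so both estimates reduce to a bound on the remainder. The companion identity above yields $\bigl||z|^2 w_i-z_i\bigr|\le\bigl||z|^2 w-z\bigr|=|z|\,|a|$, and hence
$$\left|\frac{|z|^2 w_i-z_i}{|a|^N}\right|\le\frac{|z|}{|a|^{N-1}}.$$
Combining with $|a|\ge|w-z|$ (an immediate consequence of the first identity when $|w|,|z|\ge1$) gives \eqref{2019-12-09-01}. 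For \eqref{2019-12-09-02}, as $|w|\to\infty$ with $|z|$ bounded one has $|a|^2=|w|^2|z|^2+1-2\langle w,z\rangle\sim|w|^2|z|^2$, so $|z|/|a|^{N-1}=O\bigl(1/(|w|^{N-1}|z|^{N-2})\bigr)$, which is exactly the claimed error term.

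Finally \eqref{G2} is a direct application of the Green representation formula \eqref{grf} to $U=B(0,1)$. Although the paper introduces $G_0$ as the Green function of the exterior, the same explicit expression also serves as the Green function of $B(0,1)$: the singularity at $w=z$ is correct, the image point $w/|w|^2$ lies in the complementary region and is harmless, and $G_0$ vanishes on $|y|=1$ by the first identity. With outward normal $\nu_y=y$ on $\partial B(0,1)$ the normal derivative on the sphere is computed exactly as in \eqref{daaa2019-11-02-10} but with the opposite sign, yielding the Poisson kernel $(1-|s|^2)/(N\omega_N|s-y|^N)$; plugging into \eqref{grf} produces \eqref{G2}. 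The one step that might look delicate at first sight is \eqref{2019-12-09-01}, since bounding $|z|^2|w|/|a|^N$ uniformly for $|w|,|z|>1$ would otherwise demand a case split on the relative sizes of $|w-z|$ and $\sqrt{(|w|^2-1)(|z|^2-1)}$; the identity $\bigl||z|^2 w-z\bigr|=|z|\,|a|$ bypasses all of this and is really the only nontrivial observation in the proof.
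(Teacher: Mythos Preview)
Your proof is correct and follows essentially the same route as the paper: both compute $\partial_{w_i}G_0$ explicitly as in \eqref{2019-12-08-29}, both exploit the symmetry $\bigl||z|w-z/|z|\bigr|=\bigl||w|z-w/|w|\bigr|$ together with the bound $\bigl||z|^2w_i-z_i\bigr|\le|z|\,\bigl||z|w-z/|z|\bigr|$ (your ``companion identity'' is exactly this inequality sharpened to an equality of norms), and both use $|a|\ge|w-z|$ for \eqref{2019-12-09-01} and $|a|\ge\tfrac12|w||z|$ for \eqref{2019-12-09-02}. One small slip: in your argument for \eqref{2019-12-09-02} you write ``with $|z|$ bounded'', but the statement requires the estimate for all $|z|>1$, and indeed your own bound $|z|/|a|^{N-1}$ together with $|a|\ge|w||z|-1\ge\tfrac12|w||z|$ for $|w|\ge2$ already gives the conclusion uniformly in $|z|>1$ --- no boundedness is needed, so just drop that clause.
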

\begin{proof}
Formula \eqref{daaa2019-11-02-10} is a straightforward computation and  \eqref{G2} follows by \eqref{grf} and the well known Poisson kernel.
Concerning \eqref{2019-12-09-01} we have that
\begin{equation}\label{2019-12-08-29}
\frac{\partial G_0(w,z)}{\partial w_i} =
-\frac{1}{N\omega_N}\left(\frac{w_i-z_i}{|w-z|^N}-
\frac{w_i|z|^2-z_i}{\left(|w|^2|z|^2-2\langle w,z\rangle+1\right)^\frac N2}\right).
 \end{equation}
Since $\Big||z|w- \frac{z}{|z|}\Big|=\Big||w| z- \frac{w}{|w|}\Big|\geq |w-z|$ for $|w|\geq 1$ and $|z|\geq 1$, and $\Big|w_i|z|^2-z_i\Big|\le|z|\cdot\Big||z|w- \frac{z}{|z|}\Big|$, we find
\begin{equation*}
\begin{split}
\frac{\partial G_0(w,z)}{\partial w_i}
=&O\left(\frac{1}{|w-z|^{N-1}}+\frac{|z|}{\big||z|w- \frac{z}{|z|}\big|^{N-1}}\right)
=O\left(\frac{|z|}{|w-z|^{N-1}}\right).
\end{split}
\end{equation*}
Finally if
$|w|\to \infty$  we have $\left||z| w- \frac{z}{|z|}\right|\geq \frac{1}{2}|z|\cdot|w|$ and                                                                                                                                                                                                                                                                                                                                                                                                                                                            so by \eqref{2019-12-08-29} we get
 \begin{equation*}
\begin{split}
  \frac{\partial G_0(w,z)}{\partial w_i}
=&-\frac{1}{N\omega_N} \frac{w_i-z_i}{|w-z|^N} +O\left(\frac{|w|\cdot|z|^2+|z|}{\left||z|w- \frac{z}{|z|}\right|^N}\right),
\end{split}
\end{equation*}
which proves \eqref{2019-12-09-02}.
\end{proof}
\begin{rem}
Let us point out that  the {\em Green function} $G_0$ of $\R^N\backslash B(0,1)$  and
the {\em Poisson kernel} of $B(0,1)$  has the same formula (see \cite{BF1996}). This will be used to compute some  integral in $\R^N\backslash B(0,1)$.
\end{rem}
Now let us recall the {\em Newtonian potential} of a function $p\in C^{0,\a}(U)$ is given by
\begin{equation*}
L(x)=\int_U S(x,y)p(y)dy.
\end{equation*}
Next result computes the second derivative of the function $A$. It will be used in Section \ref{s4}.
\begin{lemma}\label{G5}
We have that
\begin{equation}\label{G33}
\frac{\partial^2  L(x)}{\partial x_i\partial x_j}=\int_{B_R(x)}\frac{\partial^2S(x,y)}{\partial x_i\partial x_j}\big(p(y)-p(x)\big)dy-\frac1Np(x)\delta_{ij},
\end{equation}
where $B_R(x)$ is any ball centered at $x$ containing $U$,  $\delta_{ij}$ is the Kronecker delta and $p$ is extended to vanish outside $U$.
\end{lemma}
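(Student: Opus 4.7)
The identity is the classical representation of the second derivative of a Newtonian potential (analogous to Lemma~4.2 of Gilbarg--Trudinger). The plan is to regularize the singularity of $\partial^2_{ij}S$ with a smooth cutoff, exploit the H\"older continuity of $p$ via the splitting $p(y)=[p(y)-p(x)]+p(x)$, and identify $-p(x)\delta_{ij}/N$ as the residual contribution of the cutoff.

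I would first observe that $\partial_i L(x)=\int_{B_R(x)}\partial_{x_i}S(x,y)\,p(y)\,dy$ by dominated convergence, since $\partial_{x_i}S=O(|x-y|^{1-N})$ is locally integrable and $p$ is extended by zero outside $U\subset B_R(x)$. For the second derivative I would pick a smooth cutoff $\psi_\eta(t)$ with $\psi_\eta\equiv 0$ on $t\le\eta/2$ and $\psi_\eta\equiv 1$ on $t\ge\eta$, consider the regularized quantity
\[
v^\eta_i(x):=\int_{B_R(x)}\partial_{x_i}S(x,y)\,\psi_\eta(|x-y|)\,p(y)\,dy,
\]
approximate $p$ by smooth functions $p_n\to p$ in $C^{0,\alpha}$, and use the translation invariance $\partial_{x_j}p_n(x+z)=\partial_{z_j}p_n(x+z)$ to integrate by parts in the variable $z=y-x$. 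Letting $n\to\infty$, this yields
\[
\partial_j v^\eta_i(x)=\int_{B_R(x)}\partial^2_{ij}S\,\psi_\eta\,p(y)\,dy-\int_{B_R(x)}\partial_{x_i}S\,\psi_\eta'(|x-y|)\,\tfrac{y_j-x_j}{|x-y|}\,p(y)\,dy,
\]
the outer boundary term on $\partial B_R(x)$ vanishing because $p\equiv 0$ there.

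Finally I would split $p(y)=[p(y)-p(x)]+p(x)$ in each of the two integrals and pass $\eta\to 0$. In the first integral the constant piece vanishes identically for every $\eta$, because the spherical mean of $\partial^2_{ij}S(x,y)$ at each fixed radius $|x-y|=r$ is zero (the angular integrand $-\delta_{ij}/(N\omega_N)+\omega_i\omega_j/\omega_N$ has zero integral over $S^{N-1}$), while the H\"older piece is $O(|x-y|^{\alpha-N})$, integrable, and converges by dominated convergence to $\int_{B_R(x)}\partial^2_{ij}S\,[p(y)-p(x)]\,dy$. In the second integral the H\"older piece is $O(\eta^\alpha)\to 0$ on the cutoff annulus $\eta/2\le|x-y|\le\eta$, whereas the constant piece evaluates, in polar coordinates, to $-\tfrac{p(x)\delta_{ij}}{N}\int_0^R\psi_\eta'(r)\,dr=-\tfrac{p(x)\delta_{ij}}{N}$ via the identity $\int_{S^{N-1}}\omega_i\omega_j\,d\sigma=\omega_N\delta_{ij}$. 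Adding these contributions gives \eqref{G33}. The main technical obstacle is precisely this final step: the correction $-p(x)\delta_{ij}/N$ is extracted from the derivative of the cutoff, and the H\"older hypothesis on $p$ is used in an essential way to show that the non-constant remainder vanishes in the limit; the independence of the formula from the radius $R$ is then ensured by the vanishing of the spherical mean of $\partial^2_{ij}S$.
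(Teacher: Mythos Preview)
Your proposal is correct and is precisely the argument behind Lemma~4.2 of Gilbarg--Trudinger, which is all the paper invokes: its entire proof of this lemma is the single sentence ``It is a straightforward consequence of Lemma~4.2 in \cite{GT1983}.'' One small remark: the approximation by smooth $p_n$ and integration by parts in the $z$-variable is unnecessary, since once the cutoff $\psi_\eta$ removes the singularity you may differentiate $v_i^\eta$ under the integral sign directly; but your route is also valid.
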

\begin{proof}
It is a straightforward consequence of  Lemma 4.2 in \cite{GT1983}.
\end{proof}

Now we list some lemmas which will be used in the paper.
\begin{lemma}Let $u(x)$ is a $harmonic$ function in $\Omega$ and $B=B(x,R)\subset\subset \Omega$, then
\begin{equation}\label{a2019-11-02-01}
\big|\nabla u(x)\big|\leq \frac{N}{R}\sup_{\partial B(x,R)} |u|.
\end{equation}
\end{lemma}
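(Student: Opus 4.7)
The plan is to derive the estimate as a direct consequence of the mean value property for harmonic functions, coupled with the divergence theorem. First I would observe that since $\Delta$ commutes with the partial derivative $\partial_i$, each component $\partial_i u$ is itself harmonic on $\Omega$; because $B(x,R)\Sub\Omega$, the mean value property applies to $\partial_i u$ on this ball, giving
$$\partial_i u(x)=\frac{1}{\omega_N R^N}\int_{B(x,R)}\partial_i u(y)\,dy.$$

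Next I would rewrite the integrand as $\partial_i u = \operatorname{div}(u\,e_i)$ and apply the divergence theorem. This converts the volume integral into a boundary integral that involves only $u$ itself, yielding the vectorial representation
$$\nabla u(x)=\frac{1}{\omega_N R^N}\int_{\partial B(x,R)}u(y)\,\nu(y)\,d\sigma(y),$$
where $\nu$ is the outer unit normal on $\partial B(x,R)$.

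Finally I would take moduli, use $|\nu(y)|=1$, pull the pointwise bound $|u(y)|\le \sup_{\partial B(x,R)}|u|$ outside the integral, and recall that the surface area of $\partial B(x,R)$ equals $N\omega_N R^{N-1}$. This gives
$$|\nabla u(x)|\le \frac{1}{\omega_N R^N}\cdot N\omega_N R^{N-1}\sup_{\partial B(x,R)}|u|=\frac{N}{R}\sup_{\partial B(x,R)}|u|,$$
which is exactly \eqref{a2019-11-02-01}. There is no real obstacle here; the argument is a textbook computation (see, for instance, Theorem 2.10 of \cite{GT1983}), and I mention it essentially for completeness since \eqref{a2019-11-02-01} will be invoked later to turn $C^0$ estimates on harmonic correctors into $C^1$ estimates.
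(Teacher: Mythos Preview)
Your argument is correct and is precisely the standard proof; the paper itself does not give a proof but simply refers to page 22 of \cite{GT1983}, which is exactly the mean value plus divergence theorem computation you wrote out.
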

\begin{proof}
This can be found at page 22 of \cite{GT1983}.
\end{proof}

Next lemma will be used in the proof of Lemma \ref{B3}.
 \begin{lemma}\label{G3}
Let $N=2$ and $\psi_\e$ be the function which verifies
 \begin{equation}\label{G4}
\begin{cases}
\Delta_x\psi_\e(x,z)=0&~\mbox{in}~\O\setminus B(P,\e),\\
\psi_\e(x,z)=1&~\mbox{on}~\partial\O,\\
\psi_\e(x,z)=0&~\mbox{on}~\partial B(P,\e).
\end{cases}
\end{equation}
Then we have that
\begin{equation*}
\psi_\e(x,z)=1+\frac{2\pi}{\log\e}\big( 1+o(1)\big)G(x,P)+O\left(\frac1{|\log\e|^2}\right).
\end{equation*}
\end{lemma}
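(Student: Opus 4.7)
The plan is to construct $\psi_\e$ as a multiple of $G(\cdot,P)$ plus a small harmonic correction. Since the constant $1$ already satisfies the Laplace equation and the right datum on $\partial\O$ (but not on $\partial B(P,\e)$), and since $G(\cdot,P)$ is harmonic away from $P$ with a logarithmic singularity at $P$, the natural ansatz is
\[
\psi_\e(x)=1+c_\e\,G(x,P)+r_\e(x),
\]
for some constant $c_\e\in\R$ to be chosen, with $r_\e$ harmonic in $\O\setminus B(P,\e)$ and automatically vanishing on $\partial\O$ because $G(\cdot,P)$ does.

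I would then expand $G(x,P)$ on the inner boundary. Using the splitting \eqref{G} together with the smoothness of $H(\cdot,P)$ near $P$, one has
\[
G(x,P)=-\fr1{2\pi}\log\e+H(P,P)+O(\e)\q\hbox{for }|x-P|=\e.
\]
Imposing $\psi_\e\equiv0$ on $\partial B(P,\e)$ and choosing $c_\e$ to kill the constant part of the boundary data gives
\[
c_\e=\fr{1}{\fr1{2\pi}\log\e-H(P,P)}=\fr{2\pi}{\log\e}\l(1+O\l(\fr1{|\log\e|}\r)\r)=\fr{2\pi}{\log\e}\big(1+o(1)\big),
\]
and leaves the residual datum $r_\e(x)=O(c_\e\e)=O(\e/|\log\e|)$ on $\partial B(P,\e)$. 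Since $r_\e$ is harmonic in $\O\setminus B(P,\e)$, vanishes on $\partial\O$, and is $O(\e/|\log\e|)$ on $\partial B(P,\e)$, the maximum principle gives $\|r_\e\|_{L^\infty(\O\setminus B(P,\e))}=O(\e/|\log\e|)=o(|\log\e|^{-2})$. Substituting back into the ansatz yields the claim.

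The argument is essentially routine once the correct ansatz is fixed. The only subtle point is that the correction coming from $H(P,P)$ must be absorbed into the factor $(1+o(1))$ multiplying $\fr{2\pi}{\log\e}G(x,P)$, rather than into the additive $O(|\log\e|^{-2})$ term: indeed $G(x,P)$ blows up as $x\to P$, so an additive coefficient of size $|\log\e|^{-2}$ in front of $G$ would not be uniformly small on $\O\setminus B(P,\e)$, whereas the multiplicative form keeps the error controlled up to the inner boundary.
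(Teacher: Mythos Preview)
Your proof is correct and uses the same basic ingredients as the paper (the Green function $G(\cdot,P)$ as the leading corrector, the expansion of $G$ on $\partial B(P,\e)$, and the maximum principle for the remainder), but the organisation is different and somewhat more efficient. The paper does not choose the optimal constant $c_\e$ in one shot; instead it first sets $\zeta_\e=\frac{1}{H(P,P)}\big[\frac{\log\e}{2\pi}(\psi_\e-1)-G(x,P)\big]$, observes that $\zeta_\e$ is harmonic with $\zeta_\e=-1+O(\e)$ on $\partial B(P,\e)$, and then \emph{repeats} the same trick on $\zeta_\e$, looking at $\frac{\log\e}{2\pi}\zeta_\e-G(x,P)$ and applying the maximum principle to conclude $\zeta_\e=\frac{2\pi}{\log\e}G(x,P)+O(|\log\e|^{-1})$. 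Substituting back gives the claim. Your approach absorbs the $H(P,P)$ correction directly into $c_\e=\big(\frac{1}{2\pi}\log\e-H(P,P)\big)^{-1}$, which collapses the two iterations into one and yields a sharper residual bound $r_\e=O(\e/|\log\e|)$ rather than $O(|\log\e|^{-2})$. Your closing remark explaining why the $o(1)$ must sit multiplicatively in front of $G(x,P)$ is also a point the paper leaves implicit.
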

\begin{proof}
 We set
$$\zeta_\e(x,z)=\frac1{H(P,P)}\left[\frac{\log\e}{2\pi}\big(\psi_\e(x,z)-1\big)-G(x,P)\right],$$
which solves
 \begin{equation*}
\begin{cases}
\Delta_x\zeta_\e(x,z)=0&~\mbox{in}~\O\setminus B(P,\e),\\
\zeta_\e(x,z)=0&~\mbox{on}~\partial\O,\\
\zeta_\e(x,z)=-\frac1{H(P,P)}\Big(\frac{\log\e}{2\pi}+G(x,P)\Big)=-1+O(\e)&~\mbox{on}~\partial B(P,\e).
\end{cases}
\end{equation*}
Hence repeating the same procedure we get
 \begin{equation*}
\begin{cases}
\Delta_x\left[\frac{\log\e}{2\pi}\zeta_\e(x,z)-G(x,P)\right]=0&~\mbox{in}~\O\setminus B(P,\e),\\
\frac{\log\e}{2\pi}\zeta_\e(x,z)-G(x,P)=0&~\mbox{on}~\partial\O,\\
\frac{\log\e}{2\pi}\zeta_\e(x,z)-G(x,P)=-H(x,P)+O(\e|\log\e|)&~\mbox{on}~\partial B(P,\e).
\end{cases}
\end{equation*}
  Then
by the maximum principle we get that
$$\frac{\log\e}{2\pi}\zeta_\e(x,z)-G(x,P)=O(1)~\,~\mbox{in}~\O\setminus B(P,\e),$$
which gives
\begin{equation*}
\begin{split}
\zeta_\e(x,z)=&\frac{2\pi}{\log\e}G(x,P)+O\left(\frac1{|\log\e|}\right).
\end{split}
\end{equation*}
Then we find
\begin{equation*}
\psi_\e(x,z)=1+\frac{2\pi}{\log\e}\Big( H(P,P)\zeta_\e(x,z)+G(x,P)\Big)=
1+\frac{2\pi}{\log\e}\big( 1+o(1)\big)G(x,P)+O\left(\frac1{|\log\e|^2}\right),
\end{equation*}
which gives the claim.
\end{proof}
\vfill\eject

\section{Splitting of the solution $u_\varepsilon$}\label{s2}
Let us write down the equation satisfied by $u_\e-u_0$ where $u_0$ and $u_\e$ are solutions of \eqref{1} and \eqref{aa2} respectively,
\begin{equation}\label{A1}
\begin{cases}
-\Delta \big(u_\varepsilon-u_0\big)=f\big(u_{\varepsilon}\big)-f\big(u_{0}\big)~&\mbox{in}~\Omega_\varepsilon,\\
u_\varepsilon-u_0=0~&\mbox{on}~\partial\Omega,\\
u_\varepsilon-u_0=-u_0~&\mbox{on}~\partial B(P,\varepsilon).
\end{cases}
\end{equation}
By Green's representation formula \eqref{grf},  we get
\begin{equation}\label{2019-11-25-01}
u_\varepsilon(x)=u_0(x)+
\int_{\partial B(P,\varepsilon)} \frac{\partial G_\varepsilon(x,y)}{\partial\nu_y}u_0(y)d\sigma(y)
+\int_{\Omega_\varepsilon} \Big(f\big(u_\varepsilon(y)\big)-f\big(u_0(y)\big)\Big)
G_\varepsilon(x,y)dy,
\end{equation}
where $\nu_y=-\frac{y-P}{|y-P|}$ is the outer normal vector of $\partial \big(\R^N\backslash  B(P,\varepsilon)\big)$ and $G_\varepsilon(x,y)$ is the Green function of $-\Delta$ in $\O_\varepsilon$ with zero Dirichlet boundary condition.
\vskip 0.2 cm

The behavior of $u_\e$ near $\partial B(P,\varepsilon)$ is crucial and a key point is to understand the limit of $G_\varepsilon(x,y)$ according to the location of $x$. We have the following two cases:
\vskip 0.2cm
\noindent\textup{(1)}  $x$ is far away from $P$, namely $|x-P|\ge C>0$.
\vskip 0.2cm
\noindent\textup{(2)}  $|x-P|=o(1)$.
\vskip 0.2cm
The first case is much easier as stated in the following lemma.
\begin{lemma}\label{lemma-1}
Let  $u_0$ and $u_\e$ be solutions of \eqref{1} and \eqref{aa2} respectively. Then for any fixed $R>0$, it holds
\begin{equation*}
 u_\varepsilon\rightarrow  u_0 ~\mbox{uniformly in}~C^2\big(\Omega\backslash B(P,R)\big).
\end{equation*}
\end{lemma}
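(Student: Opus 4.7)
The statement strengthens the interior $C^2_{\mathrm{loc}}$ convergence recorded in \eqref{I15} by adding uniform control up to the fixed outer boundary $\partial\O$. The plan is to promote \eqref{2019-11-25-46} into a uniform $C^{2,\alpha}$ bound on $\overline{\O}\setminus B(P,R)$ via standard elliptic theory, and then pass to the limit by compactness.

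First I would fix an auxiliary radius $R'\in(0,R)$ so that for all small $\e$ the closed set $K:=\overline{\O}\setminus B(P,R)$ lies inside $\O_\e\setminus\overline{B(P,R')}$ and remains a positive distance $R-R'$ away from $\partial B(P,R')$, and in particular from the hole $\partial B(P,\e)$. On this region $u_\e$ classically solves $-\Delta u_\e=f(u_\e)$ with $u_\e=0$ on the smooth portion $\partial\O$. Because $f$ is smooth and \eqref{2019-11-25-46} holds, $f(u_\e)$ is uniformly bounded in $L^\infty$, and I would then run the standard elliptic bootstrap: interior and boundary $W^{2,p}$ estimates up to the smooth part $\partial\O$ (cf.\ Theorems~9.11 and~9.13 of \cite{GT1983}), followed by Sobolev embedding to $C^{1,\alpha}$, and then interior and boundary Schauder estimates (cf.\ Chapter~6 of \cite{GT1983}). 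This should yield
\[
\|u_\e\|_{C^{2,\alpha}(K)}\le C,
\]
with $C$ independent of $\e$.

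Once this uniform bound is in hand, Arzel\`a--Ascoli makes $\{u_\e\}$ precompact in $C^2(K)$. Along any subsequence $\e_n\to 0$ I would extract a further $C^2(K)$-convergent subsequence; its limit must agree with $u_0$ on the interior of $K$ by \eqref{I15}, and hence on all of $K$ by continuity. Since every subsequence produces the same limit $u_0$, the whole family converges in $C^2(K)$, which is the claim.

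The only delicate point is the uniformity of the Schauder constants in $\e$; I expect this to be where one must be careful but not really a serious obstacle, since $\partial\O$ is fixed and smooth while $K$ sits at a uniform positive distance from the shrinking hole, so the standard local and boundary estimates apply with constants depending only on $\O$, $R$, $R'$, $f$, and the uniform $L^\infty$ bound \eqref{2019-11-25-46}.
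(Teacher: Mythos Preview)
Your proposal is correct and follows essentially the same approach as the paper: the paper's proof is a one-line invocation of ``standard regularity theory'' applied to \eqref{A1} together with the uniform bound \eqref{2019-11-25-46}, and what you have written is precisely the detailed unpacking of that phrase (uniform $W^{2,p}$ and Schauder estimates away from the hole and up to $\partial\O$, then compactness and identification of the limit via \eqref{I15}). The only cosmetic difference is that the paper frames the argument in terms of the difference $u_\e-u_0$ solving \eqref{A1}, while you work with $u_\e$ directly; both lead to the same conclusion.
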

\begin{proof}
Since $u_\e-u_0$ satisfies \eqref{A1} the claim follows by \eqref{2019-11-25-46} and the standard regularity theory.
\end{proof}
The rest of the paper is focused to estimate \eqref{2019-11-25-01} if $x$ is  approaching to $P$. It requires  delicate computations. We start by setting
$$x=P+\e w,\ y=P+\e z$$
and  introducing for $N\ge3$ the function $F_\e(w,z):\left(\frac{\O-P}\e\setminus B(0,1)\right)\times\left(\frac{\O-P}\e\setminus B(0,1)\right)$ defined as
\begin{equation}\label{2019-11-20-01}
F_\e(w,z)=
\e^{N-2}G_\varepsilon(P+\e w,P+\e z).
\end{equation}
A straightforward computation gives that
\begin{equation*}
\begin{cases}
-\D_w F_\varepsilon(w,z)=\delta(z)&\hbox{in }\frac{\O-P}\e\setminus B(0,1),\\
F_\varepsilon(w,z)=0&\hbox{on }\partial B(0,1).
\end{cases}
\end{equation*}
Note that $\frac{\O-P}\e\setminus B(0,1)\to \R^N\setminus B(0,1)$ and from the regularity theory we have
$F_\varepsilon(w,z)\to G_0(w,z) ~\hbox{in}~K$,
where $K$ is any compact set in $\R^N\setminus B(0,1)$.

Next, for $x=P+\e w$ we set
\begin{equation*}
I_{\e}(w)=\int_{\partial B(0,1)} \frac{\partial G_0(w,z)}{\partial\nu_z}u_0(P+\e z)d\sigma(z),
\end{equation*}
\begin{equation*}
J_{\e}(w)=\int_{\partial B(0,1)}\left(\frac{\partial F_\varepsilon(w,z)}{\partial\nu_z}-\frac{\partial G_0(w,z)}{\partial\nu_z}\right)u_0(P+\e z)d\sigma(z),
\end{equation*}
and
\begin{equation}\label{defA}
A_\varepsilon(x)=\int_{\Omega_\varepsilon} \Big(f\big(u_\varepsilon(y)\big)-f\big(u_0(y)\big)\Big)
G_\varepsilon(x,y)dy,
\end{equation}
where $\nu_z=-\frac{z}{|z|}$ is the outer normal vector of $\partial \big(\R^N\backslash  B(0,1)\big)$.
In this way, \eqref{2019-11-25-01}  becomes
\begin{equation}\label{m18}
\boxed{u_\varepsilon(x)=
u_0(x)+I_{\e}(w)+J_{\e}(w)+A_\varepsilon(x) }
\end{equation}
which gives our fundamental splitting of $u_\e$. In next sections we estimate all terms of \eqref{m18} separately.
We will see that for $N\ge3$, the quantity $u_0(x)+I_{\e}(w)$ turns to be the leading term of the expansion. For $N=2$ the analogous of \eqref{2019-11-20-01} is not enough to prove our estimates. We have a more delicate situation which will described in Section \ref{s3}.
\vfill\eject

\section{Estimates for  $I_{\e}$, $J_{\e}$ and $A_\e$}\label{s3}
This section is divided in three parts where we estimate $I_{\e}$, $J_{\e}$ and $A_\varepsilon$ respectively.
\vskip 0.2cm
\subsection{Estimate of  $I_{\e}(w)=\displaystyle\int_{\partial B(0,1)} \frac{\partial G_0(w,z)}{\partial\nu_z}u_0(P+\e z)d\sigma(z)$}
\begin{lemma}
For $N\geq 2$ and $|w|>1$, it holds
\begin{equation}\label{a2019-11-15-05}
 I_{\e}(w)=-\frac{1}{|w|^{N-2}}\big(u_0(P)+o(1)\big),
 \end{equation}
\begin{equation}\label{ab2019-11-15-05}
\frac{\partial I_{\e}(w)}{\partial w_i}=\frac{(N-2)w_i}{|w|^N}\big(u_0(P)+o(1)\big)+
 O\left(\frac{\varepsilon}{|w|^N}\right),
 \end{equation}
and
\begin{equation}\label{2019-11-15-05b}
\begin{split}
\frac{\partial^2  I_{\e}(w)}{\partial w_i\partial w_j}=
 \frac{N-2}{|w|^N}\left(\delta_{ij}-\frac{Nw_iw_j}{|w|^2}+o\big(1\big)\right)
 + O\left(\frac{\varepsilon}{|w|^{N+1}}\right),
\end{split}
\end{equation}
where $\delta_{ij}$
is the Kronecker delta.
\end{lemma}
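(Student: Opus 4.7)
The plan is to recognize $-\partial G_{0}(w,z)/\partial\nu_{z}$ as the Poisson kernel for the exterior problem on $\R^{N}\setminus\overline{B(0,1)}$. By formula \eqref{daaa2019-11-02-10} and the standard Poisson integral identity \eqref{G2}, $I_{\e}(w)=-v_{\e}(w)$, where $v_{\e}$ is the harmonic extension of the boundary trace $u_{0}(P+\e z)\big|_{|z|=1}$ to $\R^{N}\setminus\overline{B(0,1)}$ which decays at infinity when $N\geq 3$ and is bounded at infinity when $N=2$. Because $u_{0}\in C^{2}(\overline{\O})$, Taylor's formula gives
\begin{equation*}
u_{0}(P+\e z)=u_{0}(P)+\e\,\nabla u_{0}(P)\cdot z+r_{\e}(z),\qquad \|r_{\e}\|_{C^{2}(\partial B(0,1))}=O(\e^{2}).
\end{equation*}

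By linearity of the harmonic extension,
\begin{equation*}
v_{\e}(w)=u_{0}(P)\,|w|^{2-N}+\e\sum_{j=1}^{N}\partial_{j}u_{0}(P)\,\frac{w_{j}}{|w|^{N}}+R_{\e}(w),
\end{equation*}
where the first two terms are the explicit harmonic extensions of the constant $1$ and of the coordinate $z_{j}$ (one checks each is harmonic in $\R^{N}\setminus\{0\}$, matches the boundary data on $|w|=1$, and has the correct behavior at infinity; note that $|w|^{2-N}\equiv 1$ when $N=2$, which is the bounded extension of a constant), and $R_{\e}$ is the harmonic extension of $r_{\e}$. By the maximum principle (comparing with $|w|^{2-N}$), $|R_{\e}(w)|\le C\e^{2}\,|w|^{2-N}$ for $N\geq 3$ and $|R_{\e}(w)|\le C\e^{2}$ for $N=2$.

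Next I would compute derivatives of the explicit pieces directly:
\begin{equation*}
\frac{\partial}{\partial w_{i}}|w|^{2-N}=-(N-2)\frac{w_{i}}{|w|^{N}},\qquad
\frac{\partial^{2}}{\partial w_{i}\partial w_{j}}|w|^{2-N}=-(N-2)\left(\frac{\delta_{ij}}{|w|^{N}}-N\frac{w_{i}w_{j}}{|w|^{N+2}}\right),
\end{equation*}
and $\partial_{w_{i}}(w_{j}/|w|^{N})=O(|w|^{-N})$, $\partial^{2}_{w_{i}w_{k}}(w_{j}/|w|^{N})=O(|w|^{-N-1})$. For the derivatives of $R_{\e}$, I would apply the interior gradient estimate \eqref{a2019-11-02-01} on balls of radius $\sim(|w|-1)$ to get $|\nabla R_{\e}(w)|\le C\e^{2}|w|^{1-N}$ and $|\nabla^{2}R_{\e}(w)|\le C\e^{2}|w|^{-N}$ when $|w|$ is bounded away from $1$; for $|w|$ close to $1$, boundary Schauder estimates with the $C^{2}$ data $r_{\e}$ give $|\nabla R_{\e}|,|\nabla^{2}R_{\e}|=O(\e^{2})$ up to the boundary. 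Assembling $I_{\e}=-v_{\e}$ and collecting terms yields \eqref{a2019-11-15-05}--\eqref{2019-11-15-05b}: the $\e$ contribution from the linear Taylor term is exactly $O(\e/|w|^{N})$ for first derivatives and $O(\e/|w|^{N+1})$ for second derivatives, and $\nabla R_{\e}$, $\nabla^{2}R_{\e}$ are absorbed into the same $O$-symbols using the bound $\e|w|\le\operatorname{diam}(\O)$ (since $w=(x-P)/\e$ with $x\in\O$), which turns $O(\e^{2}/|w|^{N-1})$ into $O(\e/|w|^{N})$ and $O(\e^{2}/|w|^{N})$ into $O(\e/|w|^{N+1})$.

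The main bookkeeping obstacle is ensuring the $o(1)$ and $O(\cdot)$ remainders are uniform in $w$ across the full range $1<|w|\lesssim 1/\e$, especially near $|w|=1$ where the interior harmonic gradient estimate degenerates; this is precisely where one must invoke the smoothness of $r_{\e}$ (inherited from $u_{0}\in C^{2}$) to obtain uniform boundary estimates for $\nabla R_{\e}$ and $\nabla^{2}R_{\e}$.
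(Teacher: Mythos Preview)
Your argument is correct but takes a genuinely different route from the paper. The paper exploits the Kelvin inversion $s=w/|w|^{2}$: since for $|z|=1$ one has $\dfrac{|w|^{2}-1}{|w-z|^{N}}=|w|^{N-2}\dfrac{1-|s|^{2}}{|s-z|^{N}}$, the exterior Poisson integral equals $|w|^{2-N}$ times the interior Poisson integral at $s$. Applying \eqref{G2} with $\phi(s)=u_{0}(P+\e s)$ then gives the \emph{closed-form} identity
\[
I_{\e}(w)=-\frac{u_{0}\!\left(P+\frac{\e w}{|w|^{2}}\right)}{|w|^{N-2}}+\frac{\e^{2}}{|w|^{N-2}}\,L\!\left(\frac{w}{|w|^{2}}\right),
\]
with $L$ solving a Dirichlet problem on $B(0,1)$ (hence $L,\nabla L,\nabla^{2}L$ uniformly bounded). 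All three estimates follow by straightforward differentiation of this exact formula, with no maximum-principle or Schauder input required and the case $|w|$ close to $1$ handled automatically.

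Your approach---Taylor-expand the boundary trace, write down the explicit exterior harmonic extensions $|w|^{2-N}$ and $w_{j}/|w|^{N}$ of the constant and linear parts, and control the remainder $R_{\e}$ by comparison and interior/boundary gradient estimates---is also valid and arguably more transparent about the provenance of each error term. The price is the extra bookkeeping you already flagged: the boundary Schauder step near $|w|=1$ (which needs $u_{0}\in C^{2,\alpha}$, available here since $f$ is smooth) and the use of $\e|w|\le\operatorname{diam}(\O)$ to absorb $\nabla R_{\e},\nabla^{2}R_{\e}$. One small wrinkle: formula \eqref{G2} is stated for the interior ball, so your identification of $v_{\e}$ with the exterior harmonic extension strictly rests on the Kelvin relation above (or an independent exterior Poisson formula) rather than on \eqref{G2} directly.
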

\begin{proof}
First, taking $s=\frac{w}{|w|^2}=\frac{\varepsilon(x-P)}{|x-P|^2}$ in \eqref{daaa2019-11-02-10} we get
\begin{equation*}
\begin{split}
\int_{\partial B(0,1)} &\frac{\partial G_0(w,z)}{\partial\nu_z}u_0(P+\e z)d\sigma(z)=-\frac{1}{N\omega_N|w|^{N-2}}\int_{\partial B(0,1)} \frac{1-|s|^2}{|s-z|^N}u_0(P+\e z)d\sigma(z).
\end{split}
\end{equation*}
For $|s|<1$ and  choosing $\phi(s)=u_0(P+\e s)$ in \eqref{G2} we find
\begin{equation*}
u_0(P+\e s)= \frac{1}{N\omega_N}\int_{\partial B(0,1)} \frac{1-|s|^2}{|s-z|^N}u_0(P+\e z)d\sigma(z)
+\varepsilon^2\int_{B(0,1)} f\big(u_0(P+\e z)\big)G_0(s,z)dz.
\end{equation*}
From the above computations  we  get
\begin{equation}\label{B22}
\begin{split}
I_{\e}(w)=&\int_{\partial B(0,1)}\frac{\partial G_0(w,z)}{\partial\nu_z}u_0(P+\e z)d\sigma(z)\\=&-\frac{u_0\left(P+ \frac{\varepsilon w}{|w|^2}\right)}
{|w|^{N-2}}+\frac{\varepsilon^2}{|w|^{N-2}}\underbrace{\int_{B(0,1)} f\big(u_0(P+\e z)\big)G_0(s,z)dz}_{=L(s)},
\end{split}
\end{equation}
and differentiating \eqref{B22} with respect to $w_i$
\begin{equation}\label{aaa2019-11-15-04}
\begin{split}
\frac{\partial I_{\e}(w)}{\partial w_i}=&
\frac{(N-2)w_i}{|w|^N}u_0\left(P+ \frac{\varepsilon w}{|w|^2}\right)
-\frac{\e}{|w|^{N}}\left(\frac{\partial u_0(P+ \frac{\varepsilon w}{|w|^2})}{\partial x_i}-2\frac{w_i}{ |w|^2}\sum_{j=1}^N\frac{\partial u_0(P+ \frac{\varepsilon w}{|w|^2})}{\partial x_j}w_j\right)\\&
-(N-2)\frac{\varepsilon^2w_i}{|w|^{N}}L\left(\frac w{|w|^2}\right)+\frac{\varepsilon^3}{|w|^{N}}
\left(\frac{\partial L}{\partial x_i}\left(\frac w{|w|^2}\right)+2\frac{w_i}{ |w|^2}\sum_{j=1}^Nw_j\frac{\partial L}{\partial x_j}\left(\frac w{|w|^2}\right)\right).
\end{split}
\end{equation}
Let us observe that $L$ verifies
\begin{equation}\label{B31}
\begin{cases}
-\Delta L= f\big(u_0(P+\e s)\big)&\hbox{in }B(0,1),\\
L=0&\hbox{on }\partial B(0,1),
\end{cases}
\end{equation}
and then $L(s)$, $\nabla L(s)$ and $\nabla^2 L(s)$ are uniformly bounded in $B(0,1)$. So we deduce \eqref{a2019-11-15-05} and  \eqref{ab2019-11-15-05}
by \eqref{B22} and  \eqref{aaa2019-11-15-04}.
Finally differentiating \eqref{aaa2019-11-15-04} with respect to $w_j$, we get \eqref{2019-11-15-05b} .
\end{proof}

\vskip 0.2cm

\subsection{Estimate of  $J_{\e}(w)=\displaystyle\int_{\partial B(0,1)}\left(\frac{\partial F_\varepsilon(w,z)}{\partial\nu_z}-\frac{\partial G_0(w,z)}{\partial\nu_z}\right)u_0(P+\e z)d\sigma(z)$}

\begin{lemma}\label{B3}
We have the following estimates.
\vskip 0.2cm\noindent
\begin{equation}\label{ad2019-11-21-23}
J_{\e}(w)=
\begin{cases}
O\left(\varepsilon^{N-2}\right) &\hbox{if }N\ge3,\\
\frac{\log |w|+2\pi H(P,P)}{|\log\e|}\big(u_0(P)+o(1)\big)&\hbox{if }N=2,
\end{cases}
\end{equation}
uniformly for $w\in\frac{\O-P}\e \setminus B(0,1)$.
\vskip 0.2cm\noindent
 If $|w|\rightarrow +\infty$ and $|x-P|=o(1)$, then it holds
\begin{equation}\label{2019-11-23-05}
\frac{\partial J_{\e}(w)}{\partial w_i}=
\begin{cases}
O\left(\frac{\varepsilon^{N-2}}{|w|}\right)~&\hbox{if }N\ge3,\\
\frac{w_i}{ |\log \e|\cdot|w|^2}\big(u_0(P)+o(1)\big) &\hbox{if }N=2,
\end{cases}
\end{equation}
and
\begin{equation}\label{B15}
\frac{\partial^2 {J}_{\e}(w)}{\partial w_i\partial  w_j}=
\begin{cases}
O\left(\frac{\varepsilon^{N-2}}{|w|^2}\right)~&\hbox{if }N\ge3,\\
\frac{ u_0(P)}{  |\log \varepsilon| \cdot |w|^2} \Big(\delta_{ij}-\frac{2w_iw_j}{|w|^2}+o\big(1\big)\Big)&\hbox{if }N=2.
\end{cases}
\end{equation}
\end{lemma}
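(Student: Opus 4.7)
Plan of proof. The key observation is that $J_\e(w)$, viewed as a function of $w\in U_\e:=(\Omega-P)/\e\setminus B(0,1)$, is harmonic in $w$ (being a boundary integral of $\partial_{\nu_z}F_\e(w,z)-\partial_{\nu_z}G_0(w,z)$, each of which is harmonic in $w$ for $w\ne z$) and has identifiable Dirichlet data on $\partial U_\e$. Once those data are known, the estimates will follow from the maximum principle combined with the harmonic gradient estimate \eqref{a2019-11-02-01}.

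First I would compute $J_\e$ on $\partial U_\e$. Recognising $\int_{\partial B(0,1)}\partial_{\nu_z}F_\e(w,z)u_0(P+\e z)\,d\sigma(z)$ as (minus) the Poisson integral in $U_\e$ of the inner datum $u_0(P+\e z)|_{\partial B(0,1)}$ with zero outer datum, one obtains $J_\e\equiv 0$ on $\partial B(0,1)$ and $J_\e|_{\text{outer}}=-I_\e|_{\text{outer}}$. Plugging the explicit formula \eqref{B22} for $I_\e$ at points with $|w|\sim1/\e$ gives $J_\e|_{\text{outer}}=(u_0(P)+o(1))\e^{N-2}$ when $N\ge 3$ and $J_\e|_{\text{outer}}=u_0(P)+O(\e^2)$ when $N=2$.

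For $N\ge 3$, the maximum principle immediately yields $|J_\e|=O(\e^{N-2})$ throughout $U_\e$, which is \eqref{ad2019-11-21-23}. Being harmonic, $J_\e$ satisfies the gradient estimate \eqref{a2019-11-02-01} on the ball $B(w,|w|/2)\subset U_\e$ (valid for $|w|\gg 1$ and $|w|\ll 1/\e$), giving $|\nabla J_\e(w)|\le (C/|w|)\|J_\e\|_{L^\infty}=O(\e^{N-2}/|w|)$; reapplying the same estimate to the harmonic functions $\partial_{w_i}J_\e$ on $B(w,|w|/4)$ produces $|\nabla^2 J_\e(w)|=O(\e^{N-2}/|w|^2)$, covering \eqref{2019-11-23-05} and \eqref{B15} in this case.

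For $N=2$, the outer datum of $J_\e$ is of order one and the maximum principle alone would give only $|J_\e|=O(1)$. The refinement comes by comparing $J_\e$ to $u_0(P)\psi_\e(P+\e w)$, where $\psi_\e$ is the harmonic measure introduced in Lemma \ref{G3}. Set $\rho_\e(w):=J_\e(w)-u_0(P)\psi_\e(P+\e w)$: the function $\rho_\e$ is harmonic in $U_\e$, vanishes on $\partial B(0,1)$, and equals $O(\e^2)$ on the outer boundary (using $J_\e|_{\text{outer}}=u_0(P)+O(\e^2)$ and $\psi_\e\equiv 1$ on $\partial\Omega$). The maximum principle then gives $|\rho_\e|=O(\e^2)$, and \eqref{a2019-11-02-01} sharpens this to $|\nabla^k\rho_\e(w)|=O(\e^2/|w|^k)$ for $k=1,2$. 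Substituting the Lemma \ref{G3} expansion of $\psi_\e$, together with $G(P+\e w,P)=-\frac{1}{2\pi}(\log\e+\log|w|)+H(P,P)+O(\e)$, into $J_\e=u_0(P)\psi_\e(P+\e w)+\rho_\e$ reads off the leading-order term claimed in \eqref{ad2019-11-21-23}; differentiating the same Green-function expansion in $w$ (the $\rho_\e$ remainder being negligible in the regime $|w|\to+\infty$, $\e|w|=o(1)$) yields \eqref{2019-11-23-05} and \eqref{B15} for $N=2$.

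The main obstacle will be the $N=2$ case: one must track carefully how the $O(1/|\log\e|^2)$ remainder and the $o(1)$ factor in Lemma \ref{G3} propagate through the identity for $G(P+\e w,P)$ and through one or two derivatives in $w$, and verify that in the stated regime they do not disturb the leading logarithmic behaviour.
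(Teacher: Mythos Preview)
Your approach is correct and matches the paper's: the paper applies the maximum principle and the gradient estimate \eqref{a2019-11-02-01} to the harmonic integrand $M_\e(w,z)=\partial_{\nu_z}(G_0-F_\e)(w,z)$ rather than to the integrated $J_\e$, and for $N=2$ compares to the same harmonic measure $\tilde M_\e(w)=\psi_\e(P+\e w)$, so that your $u_0(P)\psi_\e(P+\e w)$ and $\rho_\e$ correspond precisely to the paper's $\tilde J_{\e,1}$ and $\tilde J_{\e,2}$.

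For the obstacle you flag, note that you cannot differentiate the expansion of Lemma~\ref{G3} termwise, since its $o(1)$ and $O(1/|\log\e|^2)$ are only pointwise bounds. The resolution (and this is exactly what the paper does) is to observe that $J_\e(w)-u_0(P)\tfrac{\log|w|}{|\log\e|}$ is itself harmonic in $w$ and, by the already-established \eqref{ad2019-11-21-23}, equals a constant plus $o(1/|\log\e|)$ in sup norm on $B\big(w,(|w|-1)/2\big)$; applying \eqref{a2019-11-02-01} to this difference (and once more to its first derivatives) yields \eqref{2019-11-23-05} and \eqref{B15} directly. One minor correction: in your expansion $G(P+\e w,P)=-\tfrac{1}{2\pi}(\log\e+\log|w|)+H(P,P)+O(\e)$, the remainder $H(P+\e w,P)-H(P,P)$ is $O(\e|w|)$, not $O(\e)$; in the regime $\e|w|=o(1)$ this is still $o(1)$ and causes no trouble.
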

\begin{proof}[
\underline{Proof of \eqref{ad2019-11-21-23}}.]
First consider $N\ge3$ and set
 \begin{equation*}
M_\e(w,z)=\sum_{i=1}^N\left(\frac{\partial G_0(w,z)}{\partial z_i}-\frac{\partial F_\varepsilon(w,z)}{\partial z_i}\right)z_i~\mbox{with}~|z|=1
\end{equation*}
and write down the equation satisfied by $M_\e$,
\begin{equation}\label{eqG}
\begin{cases}
\Delta_wM_\e=0&~\mbox{in}~\frac{\O-P}\e\setminus B(0,1),\\
M_\e=0&~\mbox{on}~\partial B(0,1),\\
M_\e=-\frac{\partial G_0(w,z)}{\partial\nu_z}=\frac{1}{N\omega_N} \frac{|w|^2-1}{|w-z|^N}&~\mbox{on}~\frac{\partial\O-P}\e.
\end{cases}
\end{equation}
Since $w=\frac{x-P}\e$, we have that if $x\in\partial\O$, then it holds
\begin{equation}\label{ddaeqG}
0<\frac{|w|^2-1}{|w-z|^N}=\frac{\left|\frac{x-P}\e\right|^2-1}{\left|\frac{x-P}\e-z\right|^N}=
\e^{N-2}\left(\frac1{|x-P|^{N-2}}+o(1)\right)\le\frac{2\e^{N-2}}{\big(\mbox{dist}(P,\partial\O)\big)^{N-2}}.
\end{equation}
Hence by the maximum principle for harmonic function, \eqref{eqG} and \eqref{ddaeqG}, we deduce that
\begin{equation}\label{est}
\sup_{w\in\frac{\O-P}\e \setminus B(0,1)}\left|M_\e\right|\le C\e^{N-2}.
\end{equation}
Recalling that
\begin{equation*}
J_{\e}(w)=\int_{\partial B(0,1)}\left(\frac{\partial F_\varepsilon(w,z)}{\partial\nu_z}-\frac{\partial G_0(w,z)}{\partial\nu_z}\right)u_0(P+\e z)d\sigma(z),
\end{equation*}
we have that \eqref{ad2019-11-21-23} follows for $N\geq 3$.

If $N=2$ the proof of \eqref{ad2019-11-21-23} requires better approximations. So we introduce the following function:
$$M_{\e,2}(w,z)=\sum_{i=1}^2\left(\frac{\partial G_0(w,z)}{\partial z_i}-\frac{\partial F_\varepsilon(w,z)}{\partial z_i}\right)z_i-\frac{1}{2\pi}\widetilde M_\e(w,z)$$
with
\begin{equation*}
\begin{cases}
\Delta_w\widetilde{M}_\e(w,z)=0&~\mbox{in}~\frac{\O-P}\e\setminus B(0,1),\\
\widetilde{M}_\e(w,z)=0&~\mbox{on}~\partial B(0,1),\\
\widetilde{M}_\e(w,z)=1&~\mbox{on}~\frac{\partial\O-P}\e.
\end{cases}
\end{equation*}
Hence $J_{\e}(w)$ can be written as
\begin{equation}\label{B5}
\begin{split}
J_{\e}(w)=&\underbrace{\frac{1}{2\pi}\int_{\partial B(0,1)} \widetilde{M}_{\e}(w,z)u_0(P+\e z)d\sigma(z)}_{:=\widetilde{J}_{\e,1}(w)} +\underbrace{\int_{\partial B(0,1)} M_{\e,2}(w,z)u_0(P+\e z)d\sigma(z)}_{:=\widetilde{J}_{\e,2}(w)}.
\end{split}
\end{equation}
Next, we find
\begin{equation*}
\begin{cases}
\Delta_wM_{\e,2}(w,z)=0&~\mbox{in}~\frac{\O-P}\e\setminus B(0,1),\\
M_{\e,2}(w,z)=0&~\mbox{on}~\partial B(0,1),\\
M_{\e,2}(w,z)=\frac{1}{2\pi}\left(\frac{|w|^2-1}{|w-z|^2}-1\right)&~\mbox{on}~\frac{\partial\O-P}\e.
\end{cases}
\end{equation*}
Since for any $w\in \frac{\partial\O-P}\e$,  we get that
\begin{equation*}
\begin{split}
 \frac{|w|^2-1}{|w-z|^2}-1=& O\Big(\frac{1}{|w|}\Big)=O\Big(\e\Big).
\end{split}
\end{equation*}
Then  by the maximum principle, we find
$$|M_{\e,2}(w,z)|=O(\varepsilon)~\mbox{for}~w\in\frac{\O-P}\e \setminus B(0,1),$$
and then \begin{equation}\label{m4}
\widetilde{J}_{\e,2}(w)=O(\varepsilon)~\mbox{for}~w\in\frac{\O-P}\e \setminus B(0,1).
\end{equation}

Next we estimate $\widetilde{M}_\e(w,z)$. To do this let us introduce the function $\psi_\e(x,z):\O\to\R$ as $\psi_\e(x,z)=\widetilde M_\e\left(\frac{x-P}\e,z\right)$ which solves \eqref{G4}. Then by Lemma \ref{G3} we have that
\begin{equation*}
\psi_\e(x,z)=1+\frac{2\pi}{\log\e}\big( 1+o(1)\big)G(x,P)+O\left(\frac1{|\log\e|^2}\right).
\end{equation*}
 Coming back to $\widetilde M_\e$, we get
 \begin{equation*}
\begin{split}
 \widetilde M_\e(w,z)=&1+\frac{2\pi}{\log\e}\big( 1+o(1)\big)G(\varepsilon w+P,P)+O\left(\frac1{|\log\e|^2}\right)\\=&
\frac{\log |w|+2\pi H(P,P)}{|\log\e|}
+o\left(\frac{1}{|\log\e|}\right).
\end{split}
\end{equation*}
 In last estimate  we used that $\varepsilon |w|=|x-P|\rightarrow 0$. And then
 \begin{equation}\label{m3}
\begin{split}
\widetilde{J}_{\e,1}(w)= \frac{\log |w|+2\pi H(P,P)}{|\log\e|}\big(u_0(P)+o(1)\big)
+o\left(\frac{1}{|\log\e|}\right).
\end{split}
\end{equation}
Then \eqref{ad2019-11-21-23}
follows by \eqref{B5}, \eqref{m4} and \eqref{m3} when $N=2$.
\vskip 0.2cm\noindent

\noindent{\em \underline{Proof of \eqref{2019-11-23-05} and \eqref{B15}.}}
 Now we remark that, since $B\big(w,\frac{|w|-1}{2}\big) \subset\subset \frac{\O-P}\e\setminus B(0,1)$, using  \eqref{a2019-11-02-01} and \eqref{est}, we get
\begin{equation}\label{2019-11-12-02}
\begin{split}
&\big|\nabla_w M_\varepsilon(w,z) \big| \le \frac{2N}{|w|-1}\sup_{\zeta\in\partial B\big(w,\frac{|w|-1}{2}\big)}\left|M_\varepsilon(\zeta,z)\right|=
O\left(\frac{\e^{N-2}}{|w|-1}\right),~\mbox{for}~N\geq 3.
\end{split}\end{equation}
So if $|w|\rightarrow +\infty$ and $|x-P|=o(1)$, we have
\begin{equation*}
\begin{split}
&\Big|\frac{\partial J_{\e}(w)}{\partial w_i}\Big|\le \int_{\partial B(0,1)}\left|\nabla_w M_\varepsilon(w,z) \right| u_0(P+\e z)d\sigma(z)=O\left(\frac{\e^{N-2}}{|w|}\right),~\mbox{for}~N\geq 3,
\end{split}\end{equation*}
which gives \eqref{2019-11-23-05}.
Moreover since $\frac{\partial  M_\varepsilon(w,z)}{\partial w_i}$ is again a harmonic function, using again \eqref{2019-11-12-02} we find
\begin{equation*}
\begin{split}
&\left|\nabla_w^2 M_\varepsilon(w,z) \right| \le \frac{4N^2}{(|w|-1)^2}\sup_{\zeta\in\partial B\big(w,\frac{|w|-1}{2}\big)}\left| M_\varepsilon(\zeta,z) \right|=
O\left(\frac{\e^{N-2}}{|w|^2}\right),
\end{split}\end{equation*}
which implies that $\frac{\partial^2 J_{\e}(w)}{\partial w_i\partial w_j}=O\left(\frac{\e^{N-2}}{|w|^2}\right)$. This proves \eqref{B15} and it ends the proof for $N\ge3$.

Similarly, if $N=2$ and  $|w|\to \infty$, from \eqref{a2019-11-02-01} and \eqref{m3}, it holds
\begin{equation}\label{2019-11-21-33}
\frac{\partial \widetilde{J}_{\e,2}(w)}{\partial w_i}=O\left(\frac{\e}{|w|}\right)~\mbox{and}~
\frac{\partial^2 \widetilde{J}_{\e,2}(w)}{\partial w_i\partial w_j}=O\left(\frac\e{|w|^2}\right).
\end{equation}
 Also $\widetilde M_\e(w,z)-
\frac{\log |w|}{|\log\e|}$ is a harmonic function with respect to $w$ in $\frac{\O-P}\e\setminus B(0,1)$, then using \eqref{a2019-11-02-01} we have
\begin{equation}\label{2019-11-24-18}
\left|\nabla_w\left(\widetilde M_\e(w,z)-
\frac{\log |w|}{|\log\e|}\right)\right|=o\left(\frac{1}{(|w|-1)|\log\e|}\right)
\end{equation}
and
\begin{equation}\label{2019-11-24-19}
\left|\nabla_w^2\left(\widetilde M_\e(w,z)-
\frac{\log |w|}{|\log\e|}\right)\right|=o\left(\frac{1}{(|w|-1)^2|\log\e|}\right).
\end{equation}
Hence from \eqref{2019-11-24-18}, we have
\begin{equation}\label{2019-11-21-35}
\frac{\partial \widetilde{J}_{\e,1}(w)}{\partial w_i}
=\frac{w_i}{ |\log \varepsilon|\cdot |w|^2}\big(u_0(P)+o(1)\big),
\end{equation}
and \eqref{2019-11-24-19} gives
\begin{equation}\label{2019-11-21-36}
\frac{\partial^2 \widetilde{J}_{\e,1}(w)}{\partial w_i\partial w_j}
=\frac{u_0(P)}{ |\log \varepsilon|\cdot |w|^2}\left(\delta_{ij}-\frac{2w_iw_j}{|w|^2}+o\big(1\big)\right).
\end{equation}
Hence \eqref{2019-11-23-05} and \eqref{B15} can be deduced by \eqref{B5},
\eqref{2019-11-21-33},  \eqref{2019-11-21-35} and \eqref{2019-11-21-36} for $N=2$.
\end{proof}
\vskip 0.2cm
\subsection{Estimates of  $A_\varepsilon(x)=\displaystyle\int_{\Omega_\varepsilon} \Big(f\big(u_\varepsilon(y)\big)-f\big(u_0(y)\big)\Big)
G_\varepsilon(x,y)dy$}~\\
In this section we estimate $A_\varepsilon(x)$ and $\frac{\partial A_\varepsilon(x)}{\partial x_i}$. The second derivative of $A_\e$ will be considered in the next section. Set
$p_\e=f(u_\e)-f(u_0)$.
We have the following result.

\begin{lemma}\label{adL1}
We have that
\begin{equation}\label{B11}
 A_\varepsilon\to0\hbox{ uniformly in }\O.
\end{equation}
If $|x-P|\varepsilon^{-1}\rightarrow\infty$  and $|x-P|\rightarrow 0$,
it holds
\begin{equation}\label{aa2019-11-25-10}
\frac{\partial A_\varepsilon(x)}{\partial x_i}=
\begin{cases}
 o\left(1+\frac{\e^{N-2}}{|x-P|^{N-1}}\right)&\hbox{if }N\ge3,\\
 o\Big(1+\frac{1}{|x-P|\cdot |\log \varepsilon|}\Big)&\hbox{if }N=2.
\end{cases}
\end{equation}
\end{lemma}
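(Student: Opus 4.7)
The function $A_\e$ is the unique solution of $-\Delta A_\e=p_\e$ in $\O_\e$ with $A_\e=0$ on $\partial\O_\e$, where $p_\e:=f(u_\e)-f(u_0)$. Two properties of $p_\e$ will drive the argument: \emph{(i)} $\|p_\e\|_{L^\infty(\O_\e)}\le C$ uniformly in $\e$, by \eqref{2019-11-25-46} and the smoothness of $f$; \emph{(ii)} for every fixed $\delta>0$, $\|p_\e\|_{L^\infty(\O\setminus B(P,\delta))}\to 0$ as $\e\to 0$, by the $C^2$ convergence in \eqref{I15}. I will combine these with the pointwise bounds $0\le G_\e(x,y)\le C|x-y|^{-(N-2)}$ (with a $|\log|x-y||$ factor when $N=2$) and $|\nabla_x G_\e(x,y)|\le C|x-y|^{-(N-1)}$, uniform in $\e$; the first comes from comparison with the Green function of a ball containing $\O$, and the second follows, in the regime $|x-P|/\e\to\infty$ relevant to the lemma, from interior gradient estimates for harmonic functions applied on a ball of radius $\sim |x-y|$ contained in $\O_\e$ (since there $x$ sits at distance of order $|x-P|$ from the hole).

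\textbf{Proof of \eqref{B11}.} Fix a small $\delta>0$ and decompose
\begin{equation*}
A_\e(x)=\int_{B(P,\delta)\cap\O_\e}p_\e(y)G_\e(x,y)\,dy+\int_{\O\setminus B(P,\delta)}p_\e(y)G_\e(x,y)\,dy.
\end{equation*}
The first integral is dominated by $C\int_{B(P,\delta)}|x-y|^{-(N-2)}\,dy\le C\delta^2$ (and $C\delta^2|\log\delta|$ when $N=2$), uniformly in $x\in\O$. The second is dominated by $\|p_\e\|_{L^\infty(\O\setminus B(P,\delta))}\cdot\int_{\O_\e}G_\e(x,y)\,dy$; the latter integral is bounded uniformly in $\e$ since the solution of $-\Delta v=1$ in $\O_\e$ with $v=0$ on $\partial\O_\e$ is dominated by the corresponding object on $\O$, so this piece is $o_\e(1)$. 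Sending $\e\to 0$ first and then $\delta\to 0$ yields $\|A_\e\|_{L^\infty(\O_\e)}\to 0$.

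\textbf{Proof of \eqref{aa2019-11-25-10}.} Differentiating under the integral sign gives $\partial_{x_i}A_\e(x)=\int_{\O_\e}p_\e(y)\partial_{x_i}G_\e(x,y)\,dy$, and I split the domain as before. The outer contribution is bounded by $\|p_\e\|_{L^\infty(\O\setminus B(P,\delta))}\int_\O|x-y|^{-(N-1)}\,dy$, and the integral is uniformly bounded in $x\in\O$ because $|z|^{-(N-1)}$ is locally integrable in $\R^N$; hence this piece is $o_\e(1)$. The inner contribution is at most $C\int_{B(P,\delta)}|x-y|^{-(N-1)}\,dy$, which equals $O(\delta)$ when $|x-P|\le 2\delta$ and $O(\delta^N|x-P|^{-(N-1)})$ otherwise; in both cases it is $\le C\delta$. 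Therefore $|\nabla A_\e(x)|\le o_\e(1)+C\delta$ uniformly in $x\in\O_\e$ in the regime considered, and letting $\e\to 0$ then $\delta\to 0$ gives a uniform $o(1)$ bound. Since $1+\e^{N-2}/|x-P|^{N-1}\ge 1$ (respectively $1+1/(|x-P||\log\e|)\ge 1$ for $N=2$), a uniform $o(1)$ estimate is automatically of the form claimed in \eqref{aa2019-11-25-10}.

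\textbf{Main obstacle.} The principal technical point is to justify the uniform-in-$\e$ bound $|\nabla_x G_\e(x,y)|\le C|x-y|^{-(N-1)}$ in the range of $(x,y)$ entering the integrals. For $x$ with $\mathrm{dist}(x,\partial\O_\e)\gtrsim|x-y|$ it follows from interior gradient estimates applied to $G_\e(\cdot,y)$ on $B(x,|x-y|/2)$, using the pointwise bound on $G_\e$; in the regime $|x-P|/\e\to\infty$, $\mathrm{dist}(x,\partial\O_\e)$ is indeed of order $|x-P|$, large enough relative to $|x-y|$ for $y$ away from $x$ by the relevant scale. For the remaining configurations one uses the decomposition $G_\e=S-H_\e$ with $H_\e$ harmonic in $\O_\e$, controlling $\nabla H_\e$ via the scaling structure of the rescaled Green function $G_0$ from \eqref{new1} and the estimates \eqref{2019-12-09-01}--\eqref{2019-12-09-02}.
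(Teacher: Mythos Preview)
Your argument for \eqref{B11} is correct and, while organized differently from the paper (which compares $A_\e$ with the solution $\widetilde A_\e$ of $-\Delta\widetilde A_\e=\widetilde p_\e$ in all of $\O$ and then uses the maximum principle for the harmonic difference), your near/far splitting works just as well.

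The gradient part, however, has a genuine gap. Your entire argument for \eqref{aa2019-11-25-10} rests on the uniform bound $|\nabla_x G_\e(x,y)|\le C|x-y|^{-(N-1)}$, and this bound is \emph{false} in the regime of the lemma. Your interior-gradient justification requires $B(x,c|x-y|)\subset\O_\e$, i.e.\ $|x-y|\lesssim\mathrm{dist}(x,\partial\O_\e)\sim|x-P|$; but for $y$ at a fixed positive distance from $P$ one has $|x-y|\gg|x-P|$, and the ball swallows the hole. The ``remaining configurations'' paragraph points to \eqref{2019-12-09-01}--\eqref{2019-12-09-02}, but if you actually use them you do \emph{not} recover $|x-y|^{-(N-1)}$: scaling \eqref{2019-12-09-02} back to the $x,y$ variables gives
\[
|\nabla_x G_\e(x,y)|\ \le\ \frac{C}{|x-y|^{N-1}}\ +\ \frac{C\,\e^{N-2}}{|x-P|^{N-1}\,|y-P|^{N-2}}\qquad(N\ge3),
\]
and the second term is \emph{not} dominated by the first when $|y-P|^{N-2}|x-P|^{N-1}\ll\e^{N-2}|x-y|^{N-1}$, which certainly occurs for $|x-P|\ll\e^{(N-2)/(N-1)}$ and $|y-P|\sim 1$. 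Integrating this extra kernel against $|p_\e|$ yields
\[
\frac{C\,\e^{N-2}}{|x-P|^{N-1}}\int_{\O_\e}\frac{|p_\e(y)|}{|y-P|^{N-2}}\,dy\ =\ o\!\left(\frac{\e^{N-2}}{|x-P|^{N-1}}\right),
\]
which is exactly the second scale in \eqref{aa2019-11-25-10} and is \emph{not} $o(1)$ in general. So your conclusion ``$|\nabla A_\e|=o(1)$ uniformly'' is too strong; the hole genuinely contributes a term of order $\e^{N-2}/|x-P|^{N-1}$, and the content of the lemma is that its coefficient is $o(1)$.

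The paper's proof makes this precise: it rewrites $\partial_{x_i}G_\e$ via the rescaled kernel $F_\e$, splits $\partial_{w_i}F_\e=\partial_{w_i}G_0+(\partial_{w_i}F_\e-\partial_{w_i}G_0)$, and shows through several maximum-principle comparisons that the total equals $\partial_{x_i}G(x,y)$ (the Green function of $\O$, which \emph{does} satisfy the uniform $|x-y|^{-(N-1)}$ bound) plus errors of size $O\big(\e^{N-2}/(|x-P|^{N-1}|y-P|^{N-2})\big)$. Your splitting idea then applies cleanly to the $\partial_{x_i}G$ piece to give the $o(1)$, while the error piece supplies the $o(\e^{N-2}/|x-P|^{N-1})$. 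In short: replace $G_\e$ by $G$ in your integral, but you must first prove that the cost of doing so is $o(\e^{N-2}/|x-P|^{N-1})$ --- that is the substance you are missing.
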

\begin{proof}[\underline{Proof of \eqref{B11}}]  Let us  introduce  $\widetilde{A_\e}$ which verifies
\begin{equation*}
\begin{cases}
-\Delta\widetilde{A_\e}=\widetilde{p}_\e&~\mbox{in}~\O,\\
 \widetilde{A_\e}=0&~\mbox{on}~\partial\O,
\end{cases}
\end{equation*}
where $\widetilde{p}_\e$ is the extension to zero of $p_\e$ to $0$ in $\O$, namely
\begin{equation*}
\widetilde{p}_\e(x)=\begin{cases}
p_\e(x)&~\mbox{if}~x\in\O_\e,\\
0&~\mbox{if}~x\in B(P,\e).
\end{cases}
\end{equation*}
Since $\widetilde{p}_\e\to0$ a.e in $\O$ and by \eqref{2019-11-25-46}, we find that $|\widetilde{p}_\e|\le C$ with $C$ independent of $\e$. And then by the standard regularity theory we get that $\widetilde{A_\e}\to0$ $uniformly$ in $\O$. On the other hand we have that $\widetilde{A_\e}-A_\e$ is a harmonic function in $\O_\e$ and then
\begin{equation*}
\inf\limits_{x\in\partial\O_\e}\widetilde{A_\e}(x)\le\widetilde{A_\e}(x)-
A_\e(x)\le\sup\limits_{x\in\partial\O_\e}\widetilde{A_\e}(x).
\end{equation*}
Hence  $\widetilde{A_\e}-A_\e\to0$ uniformly in $\O$ and this implies \eqref{B11}.

\vskip 0.2cm

\noindent\underline{\emph{Proof of \eqref{aa2019-11-25-10}.}}
Setting $x=P+\e w$ and $y=P+\e z$ we get
$$\frac{\partial G_\e(x,y)}{\partial x_i}
=\frac1{\e^{N-1}}\frac{\partial G_0\left(w,z\right)}{\partial w_i}
+\frac1{\e^{N-1}}\Big(\frac{\partial F_\varepsilon\left(w,z\right)}{\partial w_i}
-\frac{\partial G_0\left(w,z\right)}{\partial w_i}\Big),$$
and from \eqref{defA} we have
\begin{equation}\label{2019-12-10-16}
\begin{split}
\frac{\partial  A_\varepsilon(x)}{\partial x_i}= &
 \underbrace{\frac1{\e^{N-1}}\int_{\Omega_\varepsilon}
\Big(f\big(u_\varepsilon(y)\big)-f\big(u_0(y)\big)\Big) \frac{\partial
G_0\left(w,\frac{y-P}\e\right)}{\partial w_i}dy}_{:=K_{i,1}(w)}  \\&
+  \underbrace{\frac1{\e^{N-1}}\int_{\Omega_\varepsilon}
\Big(f\big(u_\varepsilon(y)\big)-f\big(u_0(y)\big)\Big)
\left(\frac{\partial F_\varepsilon\left(w,\frac{y-P}\e\right)}{\partial w_i}
-\frac{\partial G_0\left(w,\frac{y-P}\e\right)}{\partial w_i}\right)dy}_{:=K_{i,2}(w)}.
 \end{split}\end{equation}
Let $N\ge3$,  we start again by the decomposition in \eqref{2019-12-10-16}. If
$|w|\to \infty$  we have                                                                                                                                                                                                                                                                                                                                                                                                                                                              from  \eqref{2019-12-09-02} and the dominate convergence theorem that
\begin{equation}\label{2019-12-10-33}
\begin{split}
K_{i,1}(w)= & -\frac{1}{N\omega_N} \int_{\Omega_\varepsilon}\Big(f\big(u_\varepsilon(y)\big)-f\big(u_0(y)\big)\Big)
\frac{x_i-y_i}{|x-y|^{N}}dy \\&
+O\Big(\frac{\varepsilon^{N-2}}{|x-P|^{N-1}}\Big)
\int_{\Omega_\varepsilon}\Big|f\big(u_\varepsilon(y)\big)-f\big(u_0(y)\big)\Big|
\frac{1}{|y-P|^{N-2}}dy\\=&
  -\frac{1}{N\omega_N} \int_{\Omega_\varepsilon}\Big(f\big(u_\varepsilon(y)\big)-f\big(u_0(y)\big)\Big)
\frac{x_i-y_i}{|x-y|^{N}}dy +o\left(\frac{\varepsilon^{N-2}}{|x-P|^{N-1}}\right).
\end{split}
\end{equation}
Now let
\begin{equation}\label{2019-12-10-23}
\gamma_\varepsilon(w,z)=\frac{ \partial F_\varepsilon(w,z)}{\partial w_i}-\frac{ \partial G_0(w,z)}{\partial w_i}+\widetilde{\gamma}_\varepsilon(w,z)
\end{equation}
with
\begin{equation}\label{2019-12-10-25}
\begin{cases}
\Delta_z \widetilde{\gamma}_\varepsilon(w,z) =0&~\mbox{for}~z\in \frac{\O-P}\e\setminus B(0,1),\\
\widetilde{\gamma}_\varepsilon(w,z)=0&~\mbox{for}~z\in\partial B(0,1),\\
\widetilde{\gamma}_\varepsilon(w,z)=-
\frac{1}{N\omega_N} \frac{w_i-z_i}{|w-z|^N}&~\mbox{for}~z\in\frac{\partial\O-P}\e.
\end{cases}
\end{equation}
Then we find
\begin{equation*}
\begin{cases}
\Delta_z {\gamma}_\varepsilon(w,z) =0&~\mbox{for}~z\in \frac{\O-P}\e\setminus B(0,1),\\
 {\gamma}_\varepsilon(w,z)=0&~\mbox{for}~z\in\partial B(0,1),\\
 {\gamma}_\varepsilon(w,z)=-
\frac{1}{N\omega_N}  \frac{w_i|z|^2-z_i}{\left(|w|^2|z|^2-2\langle w,z\rangle+1\right)^\frac
N2}&~\mbox{for}~z\in\frac{\partial\O-P}\e.
\end{cases}
\end{equation*}
Hence, for $z\in\frac{\partial\O-P}\e$ and arguing similarly to the proof of Lemma \ref{G1}, it holds
\begin{equation*}
\left| \frac{w_i|z|^2-z_i}{\left(|w|^2|z|^2-2\langle w,z\rangle+1\right)^\frac
N2}\right|\leq \frac{C|z|}{(|w|\cdot |z|)^{N-1}}=\frac{C\varepsilon^{2N-3}}{|x-P|^{N-1}}.
\end{equation*}
Then by the maximum principle we get
\begin{equation}\label{2019-12-10-22}
\sup_{z\in\frac{\O-P}\e\setminus B(0,1)}|\gamma_\varepsilon(w,z)|\leq  \frac{C\varepsilon^{2N-3}}{|x-P|^{N-1}}.
\end{equation}
Hence from \eqref{2019-12-10-16},  \eqref{2019-12-10-23}, \eqref{2019-12-10-22} and the dominate convergence theorem, we have
\begin{equation}\label{2019-12-10-32}
\begin{split}
 K_{i,2}(w)=\frac1{\e^{N-1}}\int_{\Omega_\varepsilon}
\Big(f\big(u_\varepsilon(y)\big)-f\big(u_0(y)\big)\Big)
\widetilde{\gamma}_\e\left(w,\frac{y-P}{\varepsilon}\right)dy +o\Big(\frac{\varepsilon^{N-2}}{|x-P|^{N-1}}\Big).
 \end{split}\end{equation}
 Next we estimate $\widetilde{\gamma}_\varepsilon(w,z)$. Let us   define
 $\varphi_\e(x,y):\O\times\O\to\R$ as $\varphi_\e(x,y)=\widetilde\gamma_\e\left(\frac{x-P}{\varepsilon},\frac{y-P}\e\right)$ and from \eqref{2019-12-10-25} it verifies
 \begin{equation*}
\begin{cases}
\Delta_y\varphi_\e(x,y)=0&~ \mbox{for}~y\in \O\setminus B(P,\e),\\
\varphi_\e(x,y)=0&~\mbox{for}~y\in\partial B(P,\e),\\
\varphi_\e(x,y)=-
\frac{1}{N\omega_N} \frac{x_i-y_i}{|x-y|^N}\varepsilon^{N-1}&~\mbox{for}~y\in \partial\O.
\end{cases}
\end{equation*}
Recalling the decomposition of the Green function in \eqref{G}, we set
$$\xi_\e(x,y)=\varphi_\e(x,y)+\frac{\partial H(x,y)}{\partial x_i}\varepsilon^{N-1}
-\frac{\partial H(x,P)}{\partial x_i}\frac{\varepsilon^{2N-3}}{|y-P|^{N-2}},$$
which satisfies
 \begin{equation*}
\begin{cases}
\Delta_y \xi_\e(x,y)=0&~\mbox{for}~y\in \O\setminus B(P,\e),\\
\xi_\e(x,y)=\varepsilon^{N-1}\Big (\frac{\partial H(x,y)}{\partial x_i}
-\frac{\partial H(x,P)}{\partial x_i}\Big)=O\big(\varepsilon^{N}\big)&~\mbox{for}~y\in\partial B(P,\e),\\
\xi_\e(x,y)=-\frac{\partial H(x,P)}{\partial x_i}\frac{\varepsilon^{2N-3}}{|y-P|^{N-2}}=
O\big(\varepsilon^{2N-3}\big)&~\mbox{for}~y\in \partial\O.
\end{cases}
\end{equation*}
Hence
by the maximum principle we get
\begin{equation*}
\begin{split} \xi_\e(x,y)=O\Big(\varepsilon^{N}\Big)\,\,\,~\mbox{for}~y\in\O\setminus B(P,\e),
\end{split}\end{equation*}
and then
\begin{equation*}
\begin{split} \varphi_\e(x,y)=-\frac{\partial H(x,y)}{\partial x_i}\varepsilon^{N-1}
+ O\left(\frac{\varepsilon^{2N-3}}{|y-P|^{N-2}} +\varepsilon^{N}\right)\,\,\,~\mbox{in}~\O\setminus B(P,\e).
\end{split}\end{equation*}
Coming back to the initial function $\widetilde{\gamma}_\e(w,z)$, we get for $x=P+\e w$,
\begin{equation}\label{a2019-12-10-32}
\begin{split}
\widetilde{\gamma}_\e(w,z)
\Big|_{z=\frac{y-P}{\varepsilon}}=&-\frac{\partial H(x,y)}{\partial x_i}\varepsilon^{N-1}
+O\left(\frac{\varepsilon^{2N-3}}{|y-P|^{N-2}}+\varepsilon^{N}\right)\,\,\,~\mbox{for}~x\in\O\setminus B(P,\e).
\end{split}\end{equation}
Hence from \eqref{2019-12-10-32} and \eqref{a2019-12-10-32},
  we find
\begin{equation*}
\begin{split}
K_{i,2}(w)=
- \int_{\Omega_\varepsilon}\Big(f\big(u_\varepsilon(y)\big)-f\big(u_0(y)\big)\Big)
\frac{\partial H(x,y)}{\partial x_i}dy+o\left(\frac{\varepsilon^{N-2}}{|x-P|^{N-1}} +\varepsilon\right),
\end{split}\end{equation*}
which jointly with \eqref{2019-12-10-33} and \eqref{2019-11-25-46} imply
\begin{equation}\label{2019-12-26-02}
\begin{split}
\frac{\partial  A_\varepsilon(x)}{\partial x_i}=&   \int_{\Omega_\varepsilon}\Big(f\big(u_\varepsilon(y)\big)-f\big(u_0(y)\big)\Big)
\frac{\partial G(x,y)}{\partial x_i}dy+o\left(\frac{\varepsilon^{N-2}}{|x-P|^{N-1}}+\varepsilon\right)\\=
&o\left(1+\frac{\varepsilon^{N-2}}{|x-P|^{N-1}}\right).
 \end{split}\end{equation}
So  \eqref{aa2019-11-25-10} follows for $N\geq 3$.

Lastly we consider the case $N=2$. First we write \eqref{2019-12-08-29} in the following way,
  \begin{equation}\label{B12}
  \begin{split}
  \frac{\partial G_0(w,z)}{\partial w_i} =&
  -\frac{1}{2\pi}\left(\frac{w_i-z_i}{|w-z|^2}-\frac{w_i|z|^2-z_i}{|w|^2|z|^2-2w\cdot
  z+1}\right)\\=&
-\frac{1}{2\pi}\left(\frac{w_i-z_i}{|w-z|^2}- \frac{w_i}{|w|^2}\right)
 + \underbrace{\frac{1}{2\pi}\frac{2\langle w,z\rangle w_i-z_i|w|^2-w_i}{
 \left(|w|^2|z|^2-2\langle w,z\rangle+1\right)|w|^2}}_{:=\widetilde{K}_{i,1}(w,z)}.
  \end{split}\end{equation}
Since $|z|\geq 1$ and $|w|\to \infty$, we find
$$\big(|w|^2|z|^2-2\langle w,z\rangle+1\big)^{\frac12}\geq
|w|\cdot|z|-1\geq \frac{C-1}{C}|w|\cdot|z|,$$
and
$$\Big|2\langle w,z\rangle w_i-z_i|w|^2-w_i\Big|\leq
4|w|^2\cdot|z|.$$
Then it holds
\begin{equation}\label{2019-12-10-02}
\widetilde{K}_{i,1}(w,z)=
O\left(\frac{1}{|w|^2\cdot|z|}\right)=
O\left(\frac{\varepsilon^3}{|x-P|^2\cdot|y-P|}\right).
  \end{equation}
Hence from \eqref{2019-12-10-16}, \eqref{B12} and \eqref{2019-12-10-02},  we get
\begin{equation}\label{2019-12-13-05}
\begin{split}
K_{i,1}(w)
=&\int_{\Omega_\varepsilon}
      \Big(f\big(u_\varepsilon(y)\big)-f\big(u_0(y)\big)\Big)
      \Big(\frac{\partial S(x,y)}{\partial x_i}-\frac{\partial S(x,P)}{\partial x_i}\Big)
      dy
 +o\left(\frac{\varepsilon^2}{|x-P|^2}\right).
\end{split}
\end{equation}
Now we compute the term $K_{i,2}(w)$ in \eqref{2019-12-10-16}. Analogously to \eqref{2019-12-10-23} set
\begin{equation}\label{2019-12-10-62}
\delta_\e(w,z)=\frac{\partial F_\varepsilon(w,z)}{\partial w_i}
-\frac{\partial G_0(w,z)}{\partial w_i}-\frac{w_i\log |z|}{2\pi|w|^2\log\varepsilon}-\widetilde{\delta}_\e(w,z),\end{equation}
where $\widetilde{\delta}_\e$  verifies
\begin{equation*}
\begin{cases}
\Delta_z \widetilde{\delta}_\e(w,z) =0&~\mbox{for}~z\in \frac{\O-P}\e\setminus B(0,1),\\
\widetilde{\delta}_\e(w,z)=0&~\mbox{for}~z\in\partial B(0,1),\\
\widetilde{\delta}_\e(w,z)=
 \frac{1}{2\pi} \frac{w_i-z_i}{|w-z|^2}&~\mbox{for}~z\in\frac{\partial\O-P}\e.
\end{cases}
\end{equation*}
So we get  for $\delta_\e$
\begin{equation*}
\begin{cases}
\Delta_z {\delta}_\e(w,z) =0&~\mbox{for}~z\in \frac{\O-P}\e\setminus B(0,1),\\
 {\delta}_\e(w,z)=0&~\mbox{for}~z\in\partial B(0,1),\\
 {\delta}_\e(w,z)=
\frac{1}{2\pi}  \frac{w_i|z|^2-z_i}{\left(|w|^2|z|^2-2\langle w,z\rangle+1\right)}+\frac{w_i\log |z|}{2\pi|w|^2\log\varepsilon}&~\mbox{for}~z\in\frac{\partial\O-P}\e.
\end{cases}
\end{equation*}
Observe that for $z\in\frac{\partial\O-P}\e$, arguing as in \eqref{2019-12-10-02} we have (here $\zeta=\e z+P\in\partial\O)$
\begin{equation*}
\begin{split}
\frac{1}{2\pi} & \frac{w_i|z|^2-z_i}{\left(|w|^2|z|^2-2\langle w,z\rangle+1\right)}+\frac{w_i\log |z|}{2\pi|w|^2\log\varepsilon}\\&=
\frac{1}{2\pi}\left[ \frac{w_i|z|^2-z_i}{\left(|w|^2|z|^2-2\langle w,z\rangle+1\right)}-\frac{w_i}{|w|^2}
\right]+\frac{w_i\log |\zeta-P|}{2\pi|w|^2\log\varepsilon}\\&=
O\left(\frac{\varepsilon^3}{|x-P|^2}\right)+O\left(\frac{\varepsilon}{|x-P|\cdot|\log\varepsilon|}\right)
=O\left(\frac{\varepsilon}{|x-P|\cdot|\log\varepsilon|}\right).
\end{split}\end{equation*}
Then the maximum principle gives us that
\begin{equation}\label{2019-12-10-61}
\sup_{z\in\frac{\partial\O-P}\e}{\delta}_\e(w,z)
\leq  \frac{C\varepsilon}{|x-P|\cdot|\log\varepsilon|}.
\end{equation}
Hence by \eqref{2019-12-10-16}, \eqref{2019-12-10-62} and \eqref{2019-12-10-61}, we have
\begin{equation}\label{2019-12-13-01}
\begin{split}
K_{i,2}(w)=& \frac1{\e}\int_{\Omega_\varepsilon}
\Big(f\big(u_\varepsilon(y)\big)-f\big(u_0(y)\big)\Big)
\widetilde{\delta}_\e\left(w,\frac{y-P}{\varepsilon}\right)dy
 \\&
+\frac1{\e}\frac{w_i}{|w|^2}\int_{\Omega_\varepsilon}
\Big(f\big(u_\varepsilon(y)\big)-f\big(u_0(y)\big)\Big)
\frac{\log\left|\frac{y-P}{\varepsilon}\right|}{2\pi\log\varepsilon}dy
+o\left(\frac{1}{|x-P|\cdot|\log\varepsilon|}\right)\\=&
\frac1{\e}\int_{\Omega_\varepsilon}
\Big(f\big(u_\varepsilon(y)\big)-f\big(u_0(y)\big)\Big)
\widetilde{\delta}_\e\left(w,\frac{y-P}{\varepsilon}\right)dy
+o\left(\frac{1}{|x-P|\cdot|\log\varepsilon|}\right)\\&
+\frac{\partial S(x,P)}{\partial x_i}\int_{\Omega_\varepsilon}
      \Big(f\big(u_\varepsilon(y)\big)-f\big(u_0(y)\big)\Big) dy.
 \end{split}\end{equation}
Finally we estimate the term $\widetilde{\delta}_\e(w,z)$. First let us   define  $\phi_\e(x,y):\O_\e\times\O_\e\to\R$ as $\phi_\e(x,y)=\widetilde\delta_\e\left(\frac{x-P}{\e},\frac{y-P}\e\right)$, which verifies
 \begin{equation*}
\begin{cases}
\Delta_y\phi_\e(x,y)=0&~ \mbox{for}~y\in \O\setminus B(P,\e),\\
\phi_\e(x,y)=0&~\mbox{for}~y\in\partial B(P,\e),\\
\phi_\e(x,y)= -\frac{\partial S(x,y)}{\partial x_i}\e&~\mbox{for}~y\in \partial\O.
\end{cases}
\end{equation*}
Then we set
$$\widetilde{\phi}_\e(x,y)= \phi_\e(x,y)-\frac{\partial H(x,y)}{\partial x_i}\e
+\frac{\partial H(x,P)}{\partial x_i}\frac{\varepsilon }{\log \varepsilon}\log |y-P|,$$
which satisfies
 \begin{equation*}
\begin{cases}
\Delta_y \widetilde{\phi}_\e(x,y)=0&~\mbox{for}~y\in \O\setminus B(P,\e),\\
\widetilde{\phi}_\e(w,y)= \left(-\frac{\partial H(x,y)}{\partial x_i}+\frac{\partial H(x,P)}{\partial x_i}\right)\e=O\big(\e^2\big)&~\mbox{for}~y\in\partial B(P,\e),\\
\widetilde{\phi}_\e(x,y)=\frac{\partial H_\varepsilon(x,P)}{\partial x_i}\frac{\varepsilon}{\log \varepsilon}\log |y-P|=O\Big(\frac{\varepsilon }{|\log\e|}\Big)&~\mbox{for}~y\in \partial\O.
\end{cases}
\end{equation*}
Then
by the maximum principle we get that
 \begin{equation}\label{2019-12-13-02}
 \widetilde{\delta}_\e(w,z)=\phi_\e(x,y)=- \frac{\partial H(x,y)}{\partial x_i}\e+O\Big(
\frac{\varepsilon}{|\log \varepsilon| }\Big)\,\,\,~\mbox{in}~\O\setminus B(P,\e).
\end{equation}
Hence from \eqref{2019-12-13-01} and \eqref{2019-12-13-02},
  we find
\begin{equation}\label{2019-12-08-03}
\begin{split}
K_{i,2}(w)=
\int_{\Omega_\varepsilon}
      \Big(f\big(u_\varepsilon(y)\big)-f\big(u_0(y)\big)\Big)
      \Big(\frac{\partial H(x,y)}{\partial x_i}+\frac{\partial S(x,P)}{\partial x_i}\Big)
      dy
+o\left(\frac{1}{|x-P|\cdot|\log\varepsilon|}\right).
\end{split}\end{equation}
Then \eqref{2019-12-13-05} and \eqref{2019-12-08-03} imply
\begin{equation}\label{2019-12-26-02ad}
\begin{split}
\frac{\partial  A_\varepsilon(x)}{\partial x_i}=&   \int_{\Omega_\varepsilon}\Big(f\big(u_\varepsilon(y)\big)-f\big(u_0(y)\big)\Big)
\frac{\partial G(x,y)}{\partial x_i}dy+o\left(\frac{1}{|x-P|\cdot|\log\varepsilon|}\right)\\=
&o(1)+o\left(\frac{1}{|x-P|\cdot|\log\varepsilon|}\right),
 \end{split}\end{equation}
 which proves \eqref{aa2019-11-25-10} for $N=2$ and ends the proof.
 \end{proof}
\vfill\eject

\section{Estimates for $u_\e$, $\nabla u_\e$ and $\nabla^2u_\e$}\label{s4}
In this section we  write some expansions for $u_\e$, $\nabla u_\e$ and $\nabla^2u_\e$ in $\O_\e$.
A first consequence of the estimate of the previous section is the following result which extends Lemma \ref{lemma-1}.
\begin{prop}\label{B16}
If  $|x-P|=o(1)$ and $\frac{|x-P|}{\varepsilon}\rightarrow +\infty$, we have that
\begin{equation}\label{ll2}
u_\e(x)-u_0(x)\to0~\mbox{when}~N\geq 3.
\end{equation}
Moreover, if $\frac{\log |x-P|}{\log \e}\to 0$, then it holds
\begin{equation}\label{2020-02-03-4}
u_\e(x)-u_0(x)\to0 ~\mbox{when}~N=2.
\end{equation}
\end{prop}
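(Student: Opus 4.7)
The plan is to take the fundamental splitting \eqref{m18}, namely
\[
u_\e(x)-u_0(x)=I_\e(w)+J_\e(w)+A_\e(x),\qquad w=\frac{x-P}{\e},
\]
and to show that under the stated regime (so in particular $|w|\to\infty$) each of the three correction terms is $o(1)$, with the understanding that in the planar case the first two are not individually small but cancel exactly.

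The remainder $A_\e(x)$ is already handled by Lemma~\ref{adL1}: estimate \eqref{B11} gives $A_\e\to 0$ uniformly on $\O$, so this contribution costs nothing. For $N\ge 3$ both of the remaining terms are directly small: by \eqref{a2019-11-15-05} one has $I_\e(w)=O(|w|^{-(N-2)})\to 0$ since $N-2>0$ and $|w|\to\infty$, while \eqref{ad2019-11-21-23} gives $J_\e(w)=O(\e^{N-2})\to 0$. Summing the three estimates proves \eqref{ll2}.

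The core difficulty is the planar case $N=2$, where $|w|^{N-2}=1$ and neither $I_\e$ nor $J_\e$ is small by itself; one must extract the correct leading constants and watch them cancel. Specializing \eqref{B22} to $N=2$ gives
\[
I_\e(w)=-u_0\!\left(P+\tfrac{\e w}{|w|^2}\right)+\e^2 L\!\left(\tfrac{w}{|w|^2}\right)\longrightarrow -u_0(P),
\]
because $|\e w/|w|^2|=\e/|w|\to 0$ and $L$ is uniformly bounded via \eqref{B31}. From \eqref{ad2019-11-21-23},
\[
J_\e(w)=\frac{\log|w|+2\pi H(P,P)}{|\log\e|}\bigl(u_0(P)+o(1)\bigr).
\]
The role of the extra hypothesis $\log|x-P|/\log\e\to 0$ is precisely to make the ratio $\log|w|/|\log\e|\to 1$: indeed, $\log|w|=\log|x-P|+|\log\e|$, hence $\log|w|/|\log\e|=1+\log|x-P|/|\log\e|\to 1$. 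This forces $J_\e(w)\to u_0(P)$, so the leading $-u_0(P)$ from $I_\e$ and the leading $+u_0(P)$ from $J_\e$ cancel, giving $I_\e+J_\e=o(1)$; together with $A_\e=o(1)$ this yields \eqref{2020-02-03-4}. Without the hypothesis $\log|x-P|/\log\e\to 0$ this cancellation would fail, which is why it appears in the statement and is the main obstacle to overcome.
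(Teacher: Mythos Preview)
Your proof is correct and follows essentially the same route as the paper. Both arguments rest on the decomposition \eqref{m18} together with \eqref{B11} for $A_\e$; for $N\ge3$ you cite \eqref{a2019-11-15-05} and \eqref{ad2019-11-21-23} exactly as the paper does, and for $N=2$ the paper re-assembles the boundary integral from the intermediate estimates \eqref{B22}, \eqref{B5}, \eqref{m4}, \eqref{m3} rather than quoting the packaged formula \eqref{ad2019-11-21-23}, but the resulting cancellation $-u_0(P)+u_0(P)$ and the use of the hypothesis $\log|x-P|/\log\e\to0$ are identical.
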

\begin{proof}
From \eqref{2019-11-25-01} we have
\begin{equation*}
\begin{split}
u_\e(x)-u_0(x)=&
\int_{\partial B(P,\varepsilon)} \frac{\partial G_\varepsilon(x,y)}{\partial\nu_y}u_0(y)d\sigma(y)
+ A_\varepsilon(x)\\=
& \e^{N-2}\int_{\partial B(0,1)} \frac{\partial  G_\varepsilon(P+\e w,P+\e z)}{ \partial\nu_z}u_0(P+\e z)d\sigma(z) +  A_\varepsilon(x) \\=
& \int_{\partial B(0,1)} \frac{\partial F_\varepsilon(w,z)}{ \partial\nu_z}u_0(P+\e z)d\sigma(z) +  A_\varepsilon(x).
\end{split}
\end{equation*}
Now if $N\ge3$, arguing as in  \eqref{ddaeqG} and \eqref{est}, if  $\frac{|x-P|}{\varepsilon}\rightarrow +\infty$  we get
\begin{equation}\label{B8}
\begin{split}
 \int_{\partial B(0,1)} \frac{\partial F_\varepsilon(w,z)}{ \partial\nu_z}u_0(P+\e z)d\sigma(z)
=O\left(\frac{\e^{N-2}}{|x-P|^{N-2}}\right)=o\big(1\big).
\end{split}
\end{equation}
So the claim \eqref{ll2} follows by \eqref{B11} and \eqref{B8}.

If $N=2$, from \eqref{B22}, \eqref{B5}, \eqref{m4} and \eqref{m3}, we find
\begin{equation}
\begin{split}
 \int_{\partial B(0,1)} &\frac{\partial F_\varepsilon(w,z)}{ \partial\nu_z}u_0(P+\e z)d\sigma(z)
\\=
&-u_0\left(P+ \frac{\varepsilon w}{|w|^2}\right)+\varepsilon^2\underbrace{\int_{B(0,1)} f\big(u_0(P+\e z)\big)G_0(\frac{w}{|w|^2},z)dz}_{=O(1)\hbox{ by }\eqref{B31}
} \\&-\frac{\log|w|}{2\pi\log \e }\int_{\partial B(0,1)} u_0(P+\e z)d\sigma(z)+
O\Big(\frac{1}{|\log \e|}\Big)\\=&
-u_0(P)+o(1)-\frac{\log|x-P|-\log\e}{\log \e }\Big( u_0(P)+o(1)\Big)
\\=&\frac{\log|x-P|}{\log \e }\Big( u_0(P)+o(1)\Big)+o(1) =o(1),
\end{split}\end{equation}
by assumption. This and \eqref{B11} give \eqref{2020-02-03-4}.

\end{proof}
Next two propositions state some fundamental estimates for  $u_\e$.
\begin{prop}\label{prop3}
 Let  $u_0$ and $u_\varepsilon$ be the solutions to \eqref{1} and \eqref{aa2} respectively. Then for any fixed $R>1$,
\vskip 0.2cm\noindent
\begin{equation}\label{m10}
u_\e(x)=
\begin{cases}
u_0(x)-\frac{u_0(P)}{|x-P|^{N-2}}\e^{N-2}+o(1)&\hbox{if }N\ge3,\\
u_0(x)-\frac{\log |x-P|+2\pi H(P,P)}{\log\e}u_0(P)+o(1)&\hbox{if }N=2,
\end{cases}
\end{equation}
\vskip 0.2cm\noindent
in $C^2\big(B(P,R\e)\setminus B(P,\e)\big).$
\end{prop}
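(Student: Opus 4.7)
\emph{Plan of proof.} The strategy is to substitute the estimates of Section \ref{s3} into the splitting \eqref{m18} and then upgrade the resulting pointwise expansion to $C^2$ via elliptic regularity in rescaled variables. Set $w := (x-P)/\e$; in the annulus $B(P,R\e)\setminus B(P,\e)$ one has $|w|\in[1,R]$, bounded.

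\emph{Pointwise expansion.} From \eqref{a2019-11-15-05}, $I_\e(w) = -|w|^{-(N-2)}(u_0(P)+o(1))$, which, since $|w|$ is bounded, rewrites as $-\e^{N-2}|x-P|^{-(N-2)}u_0(P) + o(1)$ for $N\ge 3$ and as $-u_0(P)+o(1)$ for $N=2$. From \eqref{ad2019-11-21-23}, $J_\e = O(\e^{N-2}) = o(1)$ for $N\ge 3$, while for $N=2$ it equals $-\frac{\log|w|+2\pi H(P,P)}{\log\e}(u_0(P)+o(1))$. Lemma \ref{adL1} gives $A_\e = o(1)$ uniformly. For $N\ge3$ these three corrections combine directly via \eqref{m18} into the first line of \eqref{m10}. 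For $N=2$, writing $\log|w| = \log|x-P| - \log\e$, the constant $-u_0(P)$ coming from $I_\e$ cancels against the $-\log\e/\log\e$ factor inside $J_\e$; since $\log|x-P|/\log\e$ remains bounded on the annulus, the multiplicative $o(1)$ in $J_\e$ can be absorbed into an additive $o(1)$. This produces the second line of \eqref{m10}.

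\emph{$C^2$ upgrade.} Pick $R' > R$ and rescale $\phi_\e(w) := u_\e(P+\e w)$ on the fixed annulus $\Lambda := \overline{B(0,R')\setminus B(0,1)}$. Then
\begin{equation*}
-\Delta_w \phi_\e = \e^2 f(\phi_\e)\quad \hbox{in } \Lambda,\qquad \phi_\e\equiv0\ \hbox{on }\{|w|=1\},
\end{equation*}
and $\phi_\e$ is uniformly bounded by \eqref{2019-11-25-46}. Boundary Schauder estimates at the smooth Dirichlet circle $\{|w|=1\}$, combined with interior Schauder estimates away from this circle, give a uniform $C^{2,\alpha}$ bound of $\phi_\e$ on $\overline{B(0,R)\setminus B(0,1)}$. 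Arzel\`a-Ascoli then yields precompactness of $\phi_\e$ in $C^2(\overline{B(0,R)\setminus B(0,1)})$; the pointwise expansion already established identifies the unique limit as the harmonic profile $\phi_0(w)=u_0(P)(1-|w|^{-(N-2)})$ for $N\ge3$ (and the corresponding logarithmic profile for $N=2$), so the full sequence converges in $C^2$. Translating back to $x$-variables gives the $C^2$ version of \eqref{m10}.

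\emph{Main obstacle.} The delicate step is the $N=2$ case: the constant $-u_0(P)$ produced by $I_\e$ has to combine precisely with the $\log\e$ piece hidden inside $J_\e$ to recover the advertised logarithmic profile, and careful bookkeeping of the several $o(1)$ factors is the main source of possible mistakes. Once that algebraic identity is in place, the $C^2$ upgrade is a routine application of elliptic regularity in the rescaled annulus.
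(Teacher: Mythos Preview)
Your proof is correct and follows essentially the same strategy as the paper: identify the pointwise limit from the splitting \eqref{m18} together with the estimates \eqref{a2019-11-15-05}, \eqref{ad2019-11-21-23}, \eqref{B11}, and upgrade to $C^2$ via elliptic regularity on the rescaled annulus. The only cosmetic difference is that the paper works with $\eta_\e(w)=u_\e(P+\e w)-u_0(P+\e w)$ (whose right-hand side is $\e^2 c_\e(w)\eta_\e$) while you work with $\phi_\e(w)=u_\e(P+\e w)$ directly; both give the same compactness, and your identification of the limit profile is the same as the paper's identification of $\eta_0$.
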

\begin{proof}
Set, for $x=P+\e w$,
\begin{equation*}
\eta_\e(w)=u_\e(P+\e w)-u_0(P+\e w)
\end{equation*}
which verifies the equation
\begin{equation*}
-\D\eta_\e=\e^2\Big[f\big(u_\e(P+\e w)\big)-f\big(u_0(P+\e w)\big)\Big]=\e^2c_\e(w)\eta_\e\quad\hbox{in }\frac{\O_\e-P}\e,
\end{equation*}
with $$c_\e(w)=\int_0^1f'\big(tu_\e(P+\e w)+(1-t)u_0(P+\e w)\big)dt.$$
Since $\eta_\e$ and $c_\e$ are bounded in $L^\infty$ by the standard regularity theory we get that
\begin{equation}\label{m1}
\eta_\e\to \eta_0\quad\hbox{in }C^2\Big(B(0,R)\setminus B(0,1)\Big).
\end{equation}
On the other hand, using the decomposition \eqref{m18},
 \eqref{a2019-11-15-05}, \eqref{ad2019-11-21-23} and \eqref{B11} we deduce
\begin{equation*}
\begin{split}
\eta_\e(w)=&u_\e(P+\e w)-u_0(P+\e w)=I_{\e}(w)+J_{\e}(w)+A_\varepsilon(x)\\=
&\begin{cases}
-\frac{u_0(P)+o(1)}{|w|^{N-2}}+o(1)&\hbox{if }N\ge3,\\
-u_0(P)+\frac{\log |w|+2\pi H(P,P)}{|\log\e|}\big(u_0(P)+o(1)\big)+o(1)&\hbox{if }N=2,
\end{cases}
\end{split}
\end{equation*}
which implies that $\eta_0(w)=\begin{cases}
-\frac{u_0(P)}{|w|^{N-2}}&\hbox{if }N\ge3,\\
-u_0(P)+\frac{\log |w|+2\pi H(P,P)}{|\log\e|}u_0(P)&\hbox{if }N=2.
\end{cases}$

So by \eqref{m1}, we find \eqref{m10}.

\end{proof}
\begin{prop}\label{prop2.4}
Let  $u_0$ and $u_\varepsilon$ be the solutions to \eqref{1} and \eqref{aa2} respectively. Then we have that,
\begin{equation}\label{2019-11-15-01}
u_\e(x)=
\begin{cases}
u_0(x)-\frac{u_0(P)+o(1)}{|x-P|^{N-2}}\e^{N-2}+o(1)&\hbox{if }N\ge3,\\
u_0(x)-\big(u_0(P)+o(1)\big)\frac{\log |x-P|}{\log\e}+o(1)&\hbox{if }N=2,
\end{cases}
\end{equation}
in $C^1(D_\e)$ with $D_\e=\left\{x\in\Omega_\e,~\frac{|x-P|}{\varepsilon}\rightarrow +\infty\hbox{ and }|x-P|=o(1)\right\}$.
\end{prop}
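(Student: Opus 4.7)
My plan is to directly substitute the estimates of Section \ref{s3} into the fundamental splitting \eqref{m18}, namely $u_\e(x)=u_0(x)+I_\e(w)+J_\e(w)+A_\e(x)$ with $w=(x-P)/\e$. On $D_\e$ both regimes $|w|\to\infty$ and $|x-P|\to 0$ hold simultaneously, so the asymptotic expansions for $I_\e$, $J_\e$, $A_\e$ and their gradients apply in full. This is the natural counterpart to Proposition \ref{prop3}, the difference being that there the rescaled variable $w$ stays bounded and a $C^2$-compactness argument was used, whereas here we must rely on the explicit pointwise estimates.

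For the $C^0$ part and $N\ge 3$, I would feed \eqref{a2019-11-15-05} into the splitting: $I_\e(w)=-(u_0(P)+o(1))/|w|^{N-2}$, which reverts to $-(u_0(P)+o(1))\e^{N-2}/|x-P|^{N-2}$. By \eqref{ad2019-11-21-23}, $J_\e(w)=O(\e^{N-2})$, and since $|x-P|=o(1)$ this is $o(\e^{N-2}/|x-P|^{N-2})$, absorbed into the coefficient's $o(1)$. Finally $A_\e=o(1)$ by \eqref{B11} gives the trailing $+o(1)$. For $N=2$ I would combine $I_\e=-u_0(P)+o(1)$ with $J_\e=\frac{\log|w|+2\pi H(P,P)}{|\log\e|}(u_0(P)+o(1))$, using $\log|w|=\log|x-P|+|\log\e|$; after cancellation the two contributions reassemble into $-(u_0(P)+o(1))\log|x-P|/\log\e+o(1)$.

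For the $C^1$ upgrade I would differentiate the splitting: $\partial_{x_i}(u_\e-u_0)=\e^{-1}\partial_{w_i}I_\e+\e^{-1}\partial_{w_i}J_\e+\partial_{x_i}A_\e$. For $N\ge 3$, \eqref{ab2019-11-15-05} gives $\e^{-1}\partial_{w_i}I_\e=(N-2)(u_0(P)+o(1))\e^{N-2}(x_i-P_i)/|x-P|^N+O(\e^{N-1}/|x-P|^N)$, which is exactly the gradient of the leading correction $-(u_0(P)+o(1))\e^{N-2}/|x-P|^{N-2}$. By \eqref{2019-11-23-05}, $\e^{-1}\partial_{w_i}J_\e=O(\e^{N-2}/|x-P|)$, which because $|x-P|=o(1)$ is $o(\e^{N-2}/|x-P|^{N-1})$. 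By \eqref{aa2019-11-25-10}, $\partial_{x_i}A_\e=o(1+\e^{N-2}/|x-P|^{N-1})$. Combining all three yields $\nabla(u_\e-u_0)$ matching the gradient of the claimed expansion. The $N=2$ case proceeds identically using the $N=2$ branches of \eqref{ab2019-11-15-05}, \eqref{2019-11-23-05} and \eqref{aa2019-11-25-10}, with the leading gradient contribution $\sim1/(|x-P|\cdot|\log\e|)$.

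The main obstacle is purely bookkeeping of error terms. All the analytic heavy lifting (the $C^0$ estimates of $I_\e$, $J_\e$, $A_\e$ and the corresponding gradient estimates) was carried out in Section \ref{s3}; here one simply verifies that on the intermediate region $D_\e$ the two hypotheses play complementary roles. The condition $|x-P|/\e\to\infty$ ensures that the $|x-P|$-independent remainders coming from $J_\e$ (and the similarly structured pieces of $A_\e$) are dominated by the leading $\e^{N-2}/|x-P|^{N-2}$ correction and its gradient, while the condition $|x-P|\to 0$ ensures that the various powers $1/|x-P|^k$ can be compared correctly across differentiations. For $N=2$ one must track the logarithmic cancellations between $I_\e$ and $J_\e$ carefully, but the structural argument is identical to the higher-dimensional case.
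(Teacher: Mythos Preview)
Your proposal is correct and follows essentially the same approach as the paper: both plug the estimates \eqref{a2019-11-15-05}, \eqref{ab2019-11-15-05}, \eqref{ad2019-11-21-23}, \eqref{2019-11-23-05}, \eqref{B11} and \eqref{aa2019-11-25-10} into the splitting \eqref{m18} and its differentiated version, and then reorganize the error terms using $|w|\to\infty$ and $|x-P|\to 0$. The paper in fact only writes out the gradient computation explicitly (leaving the $C^0$ part as understood), whereas you spell out both; the logarithmic cancellation you describe for $N=2$ at the $C^0$ level is exactly what is needed.
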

\begin{proof}
We only prove \eqref{2019-11-15-01} for $\nabla u_\e$. We observe that by  \eqref{m18} we have that
\begin{equation*}
\frac{\partial u_\e(x)}{\partial x_i}=\frac{\partial u_0(x)}{\partial x_i}+\frac1\e\frac{\partial I_{\e}(w)}{\partial w_i}+\frac1\e\frac{\partial J_{\e}(w)}{\partial w_i}+\frac1\e\frac{\partial A_\e(x)}{\partial x_i}.
\end{equation*}

Let $N\ge3$,  $|w|=\frac{|x-P|}{\varepsilon}\rightarrow +\infty$ and $|x-P|=o(1)$.
Hence using again the decomposition \eqref{m18}, \eqref{a2019-11-15-05}, \eqref{ab2019-11-15-05},  \eqref{2019-11-23-05} and \eqref{aa2019-11-25-10},  we have
\begin{equation*}
\begin{split}
\frac{\partial u_\e(x)}{\partial x_i}=&\frac{\partial u_0(x)}{\partial x_i}
+\big(u_0(P)+o(1)\big)\frac{(N-2)w_i}{\varepsilon|w|^N}+
 o(1)+o\left(1+\frac{\e^{N-2}}{|x-P|^{N-1}}\right)\\=
&\frac{\partial u_0(x)}{\partial x_i}+o(1)+(N-2)\big(u_0(P)+o(1)\big)\frac{x_i-P_i}{|x-P|^N}\e^{N-2}.
\end{split}\end{equation*}
So the claim \eqref{2019-11-15-01} follows for $N\geq 3$.

In the same way, if $N=2$ we get
\begin{equation*}
\begin{split}
\frac{\partial u_\e(x)}{\partial x_i}=&\frac{\partial u_0(x)}{\partial x_i}
-\frac{w_i}{\e\log \e\cdot|w|^2}\big(u_0(P)+o(1)\big)+o\left(1+\frac{1}{|x-P|\cdot |\log \varepsilon|}\right)\\=
&\frac{\partial u_0(x)}{\partial x_i}
+o(1)-\big(u_0(P)+o(1)\big)\frac{x_i-P_i}{\log\e\cdot|x-P|^2}\,
\end{split}\end{equation*}
which gives the claim \eqref{2019-11-15-01} follows for $N=2$.
\end{proof}

A first interesting consequence of the previous estimate is the following $necessary$ condition on the location of critical points ``close'' to $\partial B(P,\e)$.
\begin{prop}\label{B10}
If $x_\e$ is a critical point of $u_\e$ such that $x_\e\to P$ as $\e\to0$ and $\nabla u_0(P)\neq 0$, then we have that
\begin{equation}\label{2019-11-27-04}
x_\e=P+
\begin{cases}
\Big(C_N+o(1)\Big)\nabla u_0(P)\e^\frac{N-2}{N-1}~&\mbox{for}~N\geq 3,\\
\Big(C_2+o(1)\Big)\nabla u_0(P) \frac{1}{|\log \varepsilon|}~&\mbox{for}~N=2,
\end{cases}
\end{equation}
where
$C_N=-\left[\frac{(N-2)u_0(P)}{|\nabla u_0(P)|^N}\right]^\frac 1{N-1}$ for $N\geq 3$ and $C_2=-\frac{ u_0(P)}{ |\nabla u_0(P)|^2}$.
\end{prop}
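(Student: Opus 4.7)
The statement should follow from the sharp $C^1$ expansion in Proposition~\ref{prop2.4}: one plugs $x_\e$ into the formula for $\nabla u_\e$, sets it equal to zero, and matches the leading orders. The preliminary step is to check that $x_\e$ actually lies in the regime $D_\e=\{|x-P|/\e\to\infty,\ |x-P|\to 0\}$ where Proposition~\ref{prop2.4} applies.

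First I would rule out the alternative scenario $|x_\e-P|/\e=O(1)$. Suppose, passing to a subsequence, that $x_\e=P+\e w_\e$ with $1\le|w_\e|\le R$. Differentiating the $C^2$ expansion of Proposition~\ref{prop3} gives, for $N\ge 3$,
\begin{equation*}
\nabla u_\e(x_\e)=\nabla u_0(P)+\frac{(N-2)\,u_0(P)}{\e}\,\frac{w_\e}{|w_\e|^N}+o(1),
\end{equation*}
whose second summand has magnitude at least $(N-2)u_0(P)/(\e R^{N-1})\to\infty$ while the first is bounded. These cannot cancel for small $\e$, contradicting $\nabla u_\e(x_\e)=0$. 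The case $N=2$ works identically with $\e|\log\e|$ in place of $\e$. Combined with the hypothesis $x_\e\to P$, this places $x_\e\in D_\e$ eventually.

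Once inside $D_\e$, differentiating the $C^1$ expansion of Proposition~\ref{prop2.4} and using continuity of $\nabla u_0$ at $P$ turns the critical-point equation into
\begin{equation*}
0=\nabla u_0(P)+(N-2)\bigl(u_0(P)+o(1)\bigr)\frac{x_\e-P}{|x_\e-P|^N}\e^{N-2}+o(1)\qquad(N\ge 3),
\end{equation*}
with a logarithmic analogue for $N=2$. Writing $x_\e-P=r_\e v_\e$ with $|v_\e|=1$, matching magnitudes yields $r_\e^{N-1}=\frac{(N-2)u_0(P)}{|\nabla u_0(P)|}\e^{N-2}(1+o(1))$, while matching directions forces $v_\e=-\nabla u_0(P)/|\nabla u_0(P)|+o(1)$. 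Multiplying these recovers \eqref{2019-11-27-04} with $C_N$ as in \eqref{I23}; the $N=2$ computation is parallel and yields $C_2=-u_0(P)/|\nabla u_0(P)|^2$.

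The main obstacle is the first step: a priori the critical point $x_\e$ could collapse into the boundary layer $|x_\e-P|\sim\e$, where Proposition~\ref{prop2.4} is not available. What makes the exclusion work is that the $\e^{-1}$ (respectively $(\e|\log\e|)^{-1}$) singularity in the expansion of Proposition~\ref{prop3} overwhelms the bounded vector $\nabla u_0(P)$; once this near-hole case is ruled out, the remaining balance is a routine asymptotic calculation.
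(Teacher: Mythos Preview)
Your approach is essentially the paper's own: the authors also invoke Propositions~\ref{prop3} and~\ref{prop2.4} together, the former implicitly to rule out the boundary-layer regime $|x_\e-P|\sim\e$ and the latter to obtain the balance equation you wrote, then extract $|x_\e-P|$ and the direction exactly as you do. One small point: since the $C^2$ convergence in Proposition~\ref{prop3} is really in the rescaled variable $w$, the remainder in your displayed formula for $\nabla u_\e(x_\e)$ should be $o(1/\e)$ rather than $o(1)$, but this is still negligible next to the singular term of exact order $1/\e$, so your exclusion argument goes through unchanged.
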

\begin{proof}
By Propositions \ref{prop3} and \ref{prop2.4}, if $\nabla u(x_\e)=0$  we have that
\begin{equation}\label{B14}
-\frac{\partial u_0(P)}{\partial x_i}
=o(1)+\begin{cases}
 (N-2)\big(u_0(P)+o(1)\big)\frac{x_i-P_i}{|x-P|^N}\e^{N-2}&~\mbox{if}~N\geq 3,\\
\frac{x_i-P_i}{|\log\e|\cdot|x-P|^2}\big(u_0(P)+o(1)\big)&~\mbox{if}~N=2,
 \end{cases}
\end{equation}
and then  for $N\ge3$ we find
\begin{equation*}
\lim\limits_{\e\to0}\frac{\e^{N-2}}{|x_\e-P|^{N-1}}=\frac{|\nabla u_0(P)|}{(N-2)u_0(P)}.
\end{equation*}
Jointly with  \eqref{B14} this gives the claim \eqref{2019-11-27-04}. The case $N=2$ is analogous.
\end{proof}
Next aim is to estimate the second derivative of $A_\e$. This case is more complicated than the previous ones and also uses Propositions \ref{B16} and \ref{B10}. Since the proofs are quite long we separate the cases $N\ge3$ and $N=2$.
\begin{lemma}\label{B17}
Assume $N\geq 3$ and $
C_1\e^{\beta}\leq  |x-P| \leq C_2\e^{\beta}$
for some $C_1,C_2>0$ and $\beta\in (0,\frac{N-1}{N})$. Then it holds
\begin{equation}\label{2019-11-25-35}
\frac{\partial^2 A_\varepsilon(x)}{\partial x_i\partial x_j}=o\left(1 +\frac{\varepsilon^{N-2}}{|x-P|^{N}}\right).
\end{equation}
\end{lemma}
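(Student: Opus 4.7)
The strategy is to apply Calder\'on--Zygmund theory via Lemma \ref{G5}. First, I would split the Green function of $\O_\e$ into three layers: $G_\e(x,y)=S(x,y)+H(x,y)+[G_\e(x,y)-G(x,y)]$, where $G=S+H$ is the Green function of $\O$. This yields the decomposition
\[
A_\e(x)=L_\e(x)+W_\e(x)+C_\e(x),
\]
with $L_\e(x)=\int_{\O_\e}p_\e(y)S(x,y)\,dy$ the Newtonian potential of $p_\e$, $W_\e(x)=\int_{\O_\e}p_\e(y)H(x,y)\,dy$ the $H$-piece, and $C_\e(x)=\int_{\O_\e}p_\e(y)[G_\e(x,y)-G(x,y)]\,dy$ the ``hole correction''. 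The Hessian of each piece is then estimated separately; throughout I write $d=|x-P|\sim\e^\beta$.

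The pieces $W_\e$ and $C_\e$ are the easy ones. Since $H$ is smooth in $x$ with $\nabla^2_xH$ uniformly bounded on the relevant set, dominated convergence (using $p_\e\to 0$ a.e.\ and $|p_\e|\le C$) yields $\nabla^2 W_\e=o(1)$. The kernel $G_\e(x,y)-G(x,y)$ is harmonic in $x$ on $\O_\e$ (the $x=y$ singularities cancel), hence $C_\e$ is harmonic there, and by the argument from the proof of \eqref{B11}, $C_\e\to 0$ uniformly. Interior estimates for harmonic functions on $B(x,d/2)\Subset\O_\e$ give $\|\nabla^2 C_\e\|_{L^\infty(B(x,d/4))}\le (C/d^2)\|C_\e\|_{L^\infty}=o(1/d^2)$; since $\beta<1$ forces $d\gg\e$, one has $1/d^2\le\e^{N-2}/d^N$, so this contribution is $o(\e^{N-2}/d^N)$.

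The main term $\nabla^2 L_\e$ is handled by Lemma \ref{G5}:
\[
\frac{\partial^2L_\e(x)}{\partial x_i\partial x_j}=\int_{B_R(x)}\frac{\partial^2S(x,y)}{\partial x_i\partial x_j}\bigl(\tilde p_\e(y)-p_\e(x)\bigr)\,dy-\frac{p_\e(x)}{N}\delta_{ij}.
\]
The local term is $o(1)$ by Proposition \ref{B16}. I then split the integral into three regions: the near-$x$ region $B(x,d/4)$; the near-$P$ region $B(P,d/4)$; and the far region $B_R(x)\setminus(B(x,d/4)\cup B(P,d/4))$. On the near-$x$ region the Lipschitz bound $\|\nabla p_\e\|_{L^\infty(B(x,d/4))}=O(\e^{N-2}/d^{N-1})+O(1)$, deduced from Proposition \ref{prop2.4}, gives contribution $O(\e^{N-2}/d^{N-2})+O(d)=o(\e^{N-2}/d^N)+o(1)$. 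On the near-$P$ region, the hole $B(P,\e)$ (where $\tilde p_\e=0$) contributes $O(|p_\e(x)|\e^N/d^N)=o(\e^{N-2}/d^N)$, and the annulus $\e\le|y-P|\le d/4$ is controlled by the pointwise bound $|p_\e(y)|\le C\e^{N-2}/|y-P|^{N-2}+o(1)$ from Propositions \ref{prop3}--\ref{prop2.4}, together with $|x-y|\sim d$.

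The delicate piece is the far region, where a naive absolute-value bound on $|\partial^2 S|\sim|x-y|^{-N}$ would produce a $|\log\e|$-loss that violates the $o(1)$ target when $\beta$ is small. The resolution, and the main obstacle, is the cancellation identity
\[
\int_{B_R(x)\setminus B(x,d/4)}\frac{\partial^2S(x,y)}{\partial x_i\partial x_j}\,dy=0
\]
(annular integrals of $\partial_i\partial_j S$ vanish by symmetry). This makes the $p_\e(x)$-correction drop out of the far-field integral, up to the bounded term $\int_{B(P,d/4)}\partial^2 S\,dy$, leaving only $\int_{\mathrm{far}}\partial^2 S\cdot\tilde p_\e\,dy$, which is controlled by a dyadic decomposition in $|y-P|$ combined with the pointwise bounds on $|p_\e|$. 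For the additive $o(1)$ remainder in those pointwise bounds, one first upgrades it to a polynomial rate $O(\e^\kappa)$ with $\kappa>0$ via $L^p$-elliptic regularity applied to $\tilde A_\e$ and the maximum principle, which then beats $|\log\e|$ and closes the estimate.
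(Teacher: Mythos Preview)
There is a genuine error in your treatment of $C_\e$. You claim that $d\gg\e$ gives $1/d^2\le\e^{N-2}/d^N$, but this inequality is equivalent to $d^{N-2}\le\e^{N-2}$, i.e.\ $d\le\e$, the \emph{opposite} of what holds. With $d\sim\e^\beta$ and $\beta<1$ one always has $1/d^2\gg\e^{N-2}/d^N$; for instance at the relevant scale $\beta=\frac{N-2}{N-1}$ of Remark \ref{re}, $1/d^2\sim\e^{-2(N-2)/(N-1)}$ while $\e^{N-2}/d^N\sim\e^{-(N-2)/(N-1)}$, so $o(1/d^2)$ is far too weak. The fix within your framework would be to exploit that $C_\e$ is harmonic in $\O_\e$, vanishes on $\partial\O$, and is $o(1)$ on $\partial B(P,\e)$: comparison with (a multiple of) $\e^{N-2}/|x-P|^{N-2}$ yields the pointwise decay $|C_\e(x)|=o\big(\e^{N-2}/|x-P|^{N-2}\big)$, after which the interior estimate on $B(x,d/2)$ gives $|\nabla^2 C_\e(x)|=o\big(\e^{N-2}/d^N\big)$ as required. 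You did not supply this step.

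Your far-field treatment of $L_\e$ is also incomplete: the promised upgrade of the additive $o(1)$ in the pointwise bounds on $p_\e$ to a polynomial rate $O(\e^\kappa)$ is asserted but not justified---$\|\tilde p_\e\|_{L^p}\to 0$ by dominated convergence does not by itself give any rate in $\e$. The paper avoids the $|\log\e|$-loss by a different mechanism: after removing the small set $\tilde\O_\e=B\big(P,|x-P|^{(N-\alpha)/(N-1)}\big)\setminus B(P,\e)$, it integrates by parts on the complement to turn $\int\partial^2 S\,(p_\e-p_\e(x))$ into $\int\partial S\cdot\nabla p_\e$ plus boundary terms; since $\int_{\O}|\partial S(x,y)|\,dy=O(1)$ has no logarithmic divergence, the $o(1)$ part of $\nabla p_\e$ contributes only $o(1)$ directly. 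For the regular-part contribution the paper does not use your $H$-of-$\O$ route either: it scales to the exterior domain and compares the scaled regular part $E_\e$ to the regular part $H_0$ of the exterior Green function $G_0$ via the maximum principle (see \eqref{2019-12-10-41}), which yields the precise $o\big(\e^{N-2}/|x-P|^N\big)$ rate without any appeal to interior estimates on balls of shrinking radius.
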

\begin{rem}\label{re}
According to Proposition \ref{B10} we have that critical points $x_\e$ which converge to $P$ necessarily satisfy $C_1\e^{\frac{N-2}{N-1}}\leq  |x_\e-P| \leq C_2\e^{\frac{N-2}{N-1}}$. This is the case $\beta=\frac{N-2}{N-1}$ in Lemma \ref{B17}.  Since we will be interested to compute the second derivatives at $x_\e$ this assumption is not restrictive.
\end{rem}
\begin{proof}[Proof of Lemma \ref{B17}]
Setting $U=\O_\e$ in Lemma \ref{G5} and
$p_\e=f(u_\e)-f(u_0)$,
we get by \eqref{G33},
\begin{equation}\label{B7}
\begin{split}
&\frac{\partial^2  A_\varepsilon(x)}{\partial x_i\partial x_j}=\int_{B_R(x)}\frac{\partial^2S(x,y)}{\partial x_i\partial x_j}\big(p_\e(y)-p_\e(x)\big)dy-\frac1Np_\e(x)\delta_{ij}+\int_{\O_\e}\frac{\partial^2  H_\varepsilon(x,y)}{\partial x_i\partial x_j}p_\e(y)dy.
 \end{split}\end{equation}
Analogously to  \eqref{2019-11-20-01} let us introduce
\begin{equation*}
E_\e(w,z)=\e^{N-2}H_\varepsilon(P+\e w,P+\e z),
\end{equation*}
and rewrite \eqref{B7} as follows
\begin{equation}\label{2019-12-10-41}
\begin{split}
\frac{\partial^2  A_\varepsilon(x)}{\partial x_i\partial x_j}= &
\underbrace{\int_{B_R(x)}\frac{\partial^2S(x,y)}{\partial x_i\partial x_j}\big(p_\e(y)-p_\e(x)\big)dy-\frac1Np_\e(x)\delta_{ij}}_{=\overline{K}_0(w)} \\
&+\underbrace{\frac1{\e^{N}}\int_{\Omega_\varepsilon}
 \frac{\partial^2H_0\left(w,\frac{y-P}\e\right)}{\partial w_i\partial w_j}p_\e(y)
dy}_{=\overline{K}_1(w)}  \\&
+\underbrace{ \frac1{\e^{N}}\int_{\Omega_\varepsilon}
\left(\frac{\partial^2 E_\varepsilon\left(w,\frac{y-P}\e\right)}{\partial w_i\partial w_j}
-\frac{\partial^2 H_0\left(w,\frac{y-P}\e\right)}{\partial w_i\partial w_j}\right)p_\e(y)
dy}_{=\overline{K}_2(w)},
 \end{split}\end{equation}
where $H_0(w,z)$ is the regular part of $G_0(w,z)$ in \eqref{new1}.

Let us estimate $\overline{K}_0(w)$, $\overline{K}_1(w)$, $\overline{K}_2(w)$.
\vskip0.2cm

\noindent \underline{Estimate of $\overline{K}_0(w)$}
\vskip0.2cm

First we note that by Proposition \ref{B16} we have that
$p_\e(x)=o(1)$.
Now we estimate the integral in  $\overline{K}_0(w)$.
\begin{equation*}
\begin{split}
\int_{\O_\e} &\frac{\partial^2  S(x,y)}{\partial x_i\partial x_j}\big(p_\e(y)-p_\e(x)\big)dy
\\=& \int_{\O_\e\backslash \widetilde{\O}_\e} \frac{\partial^2  S(x,y)}{\partial x_i\partial x_j}\big(p_\e(y)-p_\e(x)\big)dy
+\int_{\widetilde{\O}_\e} \frac{\partial^2  S(x,y)}{\partial x_i\partial x_j}\big(p_\e(y)-p_\e(x)\big)dy,
\end{split}\end{equation*}
where $\widetilde{\O}_\e=B\big(P,|x-P|^{\frac{N-\a}{N-1}}\big)\backslash B(P,\e)$ with  $\a\in (0,1)$
and ${\frac{\beta(N-\a)}{N-1}}<1$.  It is immediate to verify that in this case $\widetilde{\O}_\e$ is nonempty.
Since $|x-y|\ge|x-P|-\e$, we find
\begin{equation*}
\begin{split}|x-y|^N\ge|x-P|^N\left(1-\frac\e{|x-P|}\right)^N=
|x-P|^N\left(1+O(\e^\frac{1-\a}{N-\a})\right)^N\ge\frac12|x-P|^N.
\end{split}\end{equation*}
Also we have that $\frac{\partial^2  S(x,y)}{\partial x_i\partial x_j}=O\left(\frac{1}{|x-P|^N}\right)$ for $y\in \widetilde{\O}_\e$. Moreover, being
 $p_\e$ bounded in $\Omega_\e$, we find
\begin{equation*}
 \int_{\widetilde{\O}_\e} \frac{\partial^2  S(x,y)}{\partial x_i\partial x_j}\big(p_\e(y)-p_\e(x)\big)dy
 =O\left(|x-P|^{\frac{N(1-\a)}{N-1}}\right)=o(1),
 \end{equation*}
 because $|x-P|\to0$.
Next,
\begin{equation}\label{B19}
\begin{split}
\int_{\O_\e\backslash \widetilde{\O}_\e} &\frac{\partial^2  S(x,y)}{\partial x_i\partial x_j}\big(p_\e(y)-p_\e(x)\big)dy
\\=&-\int_{\O_\e\backslash \widetilde{\O}_\e} \frac{\partial^2  S(x,y)}{\partial x_i\partial y_j}\big(p_\e(y)-p_\e(x)\big)dy\\=&
-\int_{\partial \big(\O_\e\backslash \widetilde{\O}_\e\big)} \frac{\partial  S(x,y)}{\partial x_i }\big(p_\e(y)-p_\e(x)\big)\nu_jd\sigma(y)
+\int_{\O_\e\backslash \widetilde{\O}_\e} \frac{\partial  S(x,y)}{\partial x_i }\frac{\partial p_\e(y)}{\partial y_j}dy,
\end{split}\end{equation}
and using that $u_\e,u_0\big|_{\partial\O}=0$ we get
\begin{equation*}
\begin{split}
&\int_{\partial \big(\O_\e\backslash \widetilde{\O}_\e\big)} \frac{\partial  S(x,y)}{\partial x_i }\big(p_\e(y)-p_\e(x)\big)\nu_jd\sigma(y)\\=&
-p_\e(x)\int_{\partial \Omega} \frac{\partial  S(x,y)}{\partial x_i }\nu_jd\sigma(y)-\int_{\partial B\big(P,|x-P|^{\frac{N-\alpha}{N-1}}\big)}\underbrace{ \frac{\partial S(x,y)}{\partial x_i}}_{=O\left(|x-P|^{N-1}\right)}\big(p_\e(y)-p_\e(x)\big)\nu_jd\sigma(y)\\=&
O\Big(|p_\e(x)|\Big) +O\Big( |x-P|^\frac{(2-\alpha)N-1}{N-1} \Big)=o(1).
\end{split}\end{equation*}
By Proposition \ref{B16}, the last integral is estimated as follows,
\begin{equation}\label{B20}
\begin{split}
\int_{\O_\e\backslash \widetilde{\O}_\e} &\frac{\partial S(x,y)}{\partial x_i}\frac{\partial p_\e(y)}{\partial y_j}dy\\=&
\int_{\O_\e\backslash \widetilde{\O}_\e} \frac{\partial S(x,y)}{\partial x_i }\Big(
f'\big(u_\e(y)\big)\frac{ \partial u_\e(y)}{\partial y_j}-f'\big(u_0(y)\big)\frac{ \partial u_0(y)}{\partial y_j}\Big)dy\\=&
\int_{\O_\e\backslash \widetilde{\O}_\e} \frac{\partial S(x,y)}{\partial x_i }\Big(
f'\big(u_\e(y)\big) -f'\big(u_0(y)\big)\Big)\frac{ \partial u_0(y)}{\partial y_j}dy\\+&
\int_{\O_\e\backslash \widetilde{\O}_\e} \frac{\partial S(x,y)}{\partial x_i }
f'\big(u_\e(y)\big)
\Big(\frac{ \partial u_\e(y)}{\partial y_j}-\frac{ \partial u_0(y)}{\partial y_j}
\Big) dy.
\end{split}\end{equation}
Using that $f\in C^{1,\g}$ with $\gamma\in[0,1]$ we get
\begin{equation*}
\begin{split}
 \int_{\O_\e\backslash \widetilde{\O}_\e} \frac{\partial S(x,y)}{\partial x_i }\Big(
f'\big(u_\e(y)\big) -f'\big(u_0(y)\big)\Big)\frac{ \partial u_0(y)}{\partial y_j}dy
=O\left(\int_{\O_\e\backslash \widetilde{\O}_\e} \frac{|
 u_\e(y)-u_0(y)|^\g}{|x-y|^{N-1}}dy\right)=o\big(1\big).
\end{split}\end{equation*}
Now observe that by the fact $
C_1\e^{\beta}\leq  |x-P| \leq C_2\e^{\beta}$ and if $y\in \O_\e\backslash \widetilde{\O}_\e$ then
 $$\frac{|y-P|}\e\ge\frac{|x-P|^{\frac{N-\a}{N-1}}}\e\ge \e^{{\frac{\beta(N-\a)}{N-1}}-1}\to+\infty,$$
because $|x-P|\to0$. Hence, using \eqref{m18}, \eqref{a2019-11-15-05}, \eqref{2019-11-23-05} and \eqref{aa2019-11-25-10} we find
\begin{equation}\label{B21}
\begin{split}
 \frac{ \partial u_\e(y)}{\partial y_j}-\frac{ \partial u_0(y)}{\partial y_j}
 =O\left(\frac{\varepsilon^{N-2}}{|y-P|^{N-1}}\right)+o(1)=
 O\left(\frac{\varepsilon^{N-2}}{|x-P|^{N-\a}}\right)+o(1),~\mbox{for}~y\in \O_\e\backslash \widetilde{\O}_\e.
\end{split}\end{equation}
Finally we have
\begin{equation*}
\begin{split}
\int_{\O_\e\backslash \widetilde{\O}_\e} &\frac{\partial S(x,y)}{\partial x_i }
f'\big(u_\e(y)\big)
\left(\frac{ \partial u_\e(y)}{\partial y_j}-\frac{ \partial u_0(y)}{\partial y_j}
\right) dy \\&= O\left(\frac{\varepsilon^{N-2}}{|x-P|^{N-\alpha}}\right)+o(1)
= o\left(\frac{\varepsilon^{N-2}}{|x-P|^{N}}\right)+o(1).
\end{split}\end{equation*}
Hence the above estimates show
\begin{equation*}
\int_{\O_\e}\frac{\partial^2  S(x,y)}{\partial x_i\partial x_j}\big(p_\e(y)-p_\e(x)\big)dy=o\left(\frac{\varepsilon^{N-2}}{|y-P|^{N}}\right)+o(1),
\end{equation*}
which ends the estimate of $\overline{K}_0(w)$.
\vskip0.2cm
\noindent \underline{Estimate of $\overline{K}_1(w)$}\vskip0.2cm

Since by assumption $|x-P|=o(1)$ and $|w|=\frac{|x-P|}{\varepsilon}\rightarrow +\infty$,
we have that, for any $y\in \Omega_\varepsilon$,
\begin{equation*}
\begin{split}
\frac{\partial^2H_0\left(w,z\right)}{\partial w_i\partial w_j}=&
\frac{1}{N\omega_N} \left(
 \frac{|z|^2\delta_{ij}}{\left(|w|^2|z|^2-2\langle w,z\rangle+1\right)^\frac N2}-
 \frac{N(w_i|z|^2-z_i)(w_j|z|^2-z_j)}{\left(|w|^2|z|^2-2\langle w,z\rangle+1\right)^{\frac N2+1}}\right)\\ =
 &O\left(\frac{1}{|w|^N|z|^{N-2}}\right)
= O\left( \frac{\varepsilon^{2N-2}}{|x-P|^N |y-P|^{N-2}}\right).
\end{split}
\end{equation*}
 This gives us
\begin{equation}\label{2019-12-10-45}
\begin{split}
 \overline{K}_1(w)=&\frac1{\e^{N}}\int_{\Omega_\varepsilon}
 \frac{\partial^2H_0\left(w,\frac{y-P}\e\right)}{\partial w_i\partial w_j}p_\e(y)
dy \\=
&O\left( \frac{\varepsilon^{N-2}}{|x-P|^N}\int_{\Omega_\varepsilon}
 \Big(f\big(u_\varepsilon(y)\big)-f\big(u_0(y)\big)\right)
 \frac{1}{|y-P|^{N-2}}
dy\Big)
= o\left(\frac{\varepsilon^{N-2}}{|x-P|^N}\right).
\end{split}
\end{equation}
\vskip0.2cm
\noindent\underline{Estimate of $\overline{K}_2(w)$}\vskip0.2cm

We have that
\begin{equation}\label{2019-12-10-35}
\begin{cases}
\Delta_z\left(
      \frac{\partial^2 E_\varepsilon\left(w,z\right)}{\partial w_i\partial w_j}
      -\frac{\partial^2 H_0\left(w,z\right)}{\partial w_i\partial w_j}
\right)=0&~\mbox{for}~z\in \frac{\O-P}\e\setminus B(0,1),\\
 \frac{\partial^2 E_\varepsilon\left(w,z\right)}{\partial w_i\partial w_j}
 -\frac{\partial^2 H_0\left(w,z\right)}{\partial w_i\partial w_j}
 = 0&~\mbox{for}~z\in\partial B(0,1),\\
  \frac{\partial^2 E_\varepsilon\left(w,z\right)}{\partial w_i\partial w_j}
   -\frac{\partial^2H_0\left(w,z\right)}{\partial w_i\partial w_j}
= -\frac{\partial^2 H_0\left(w,z\right)}{\partial w_i\partial w_j}&~\mbox{for}~z\in\frac{\partial\O-P}\e.
\end{cases}
\end{equation}
Since $|w| \rightarrow +\infty$, arguing as in \eqref{2019-12-10-45} we have that
\begin{equation}\label{2019-12-10-36}
\frac{\partial^2 H_0\left(w,z\right)}{\partial w_i\partial w_j}=O\left(\frac{ \varepsilon^{N-2}}{|w|^N}\right)\,\,\, ~\mbox{for any}~z\in\frac{\partial\O-P}\e.
\end{equation}
Then by maximum principle, \eqref{2019-12-10-35} and \eqref{2019-12-10-36}  we get
\begin{equation}\label{2019-12-10-37}
\frac{\partial^2E_\varepsilon\left(w,z\right)}{\partial w_i\partial w_j}
 -\frac{\partial^2 H_0\left(w,z\right)}{\partial w_i\partial w_j}
=O\left(\frac{ \varepsilon^{N-2}}{|w|^N}\right)~\mbox{for any}~z\in\frac{\O-P}\e\backslash B(0,1).
\end{equation}
Hence \eqref{2019-12-10-41} and  \eqref{2019-12-10-37} imply
\begin{equation*}
\overline{K}_2(w)=O\left(\frac1{\e^2|w|^N}
 \int_{\Omega_\varepsilon}p_\e(y)dy\right)=o\left(\frac{\varepsilon^{N-2}}{|x-P|^N}\right),
\end{equation*}
which ends the estimate of $\overline{K}_2(w)$.
\vskip0.2cm
Collecting the estimates of $\overline{K}_0(w)$, $\overline{K}_1(w)$, $\overline{K}_2(w)$ by \eqref{2019-12-10-41}, the claim \eqref{2019-11-25-35} follows.
\end{proof}
Finally we consider the case $N=2$. As in the previous lemma we only consider a suitable neighborhood of $P$.
\begin{lemma}\label{B18}
Assume $N=2$ and   $\frac{C_1}{|\log\e|^{\delta}}\leq  |x-P| \leq \frac{C_2}{|\log\e|^{\delta}}$ for some $C_1,C_2,\delta>0$.  It holds
\begin{equation}\label{2019-11-25-35a}
\frac{\partial^2 A_\varepsilon(x)}{\partial x_i\partial x_j}=o\left(\frac{1}{|x-P|^{2}\cdot|\log \e|}\right)+o(1).
\end{equation}
\end{lemma}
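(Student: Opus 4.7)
The plan is to mirror the proof of Lemma \ref{B17}, adapting each step to the planar setting. I apply Lemma \ref{G5} with $U=\Omega_\e$ and $p_\e=f(u_\e)-f(u_0)$, and decompose $G_\e=S+H_\e$ to split
\begin{equation*}
\frac{\partial^2 A_\e(x)}{\partial x_i\partial x_j}
=\overline{K}_0(w)+\overline{K}_1(w)+\overline{K}_2(w),
\end{equation*}
in direct analogy to \eqref{2019-12-10-41}, with $\overline{K}_0$ the singular Newtonian-potential piece, $\overline{K}_1$ the leading contribution from $H_0$ (the regular part of $G_0$ from \eqref{new1}), and $\overline{K}_2$ the correction $\partial^2E_\e-\partial^2H_0$ due to the finite domain.

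For $\overline{K}_0$, I choose $\widetilde{\O}_\e=B(P,|x-P|^{1+\alpha})\setminus B(P,\e)$ with small $\alpha>0$; since $|x-P|\sim|\log\e|^{-\delta}$ we have $|x-P|^{1+\alpha}\gg\e$, so $\widetilde{\O}_\e\ne\emptyset$. On $\widetilde{\O}_\e$, using $|\partial^2S/\partial x_i\partial x_j|=O(|x-P|^{-2})$, boundedness of $p_\e$, and the volume bound, I get a contribution of order $|x-P|^{2\alpha}=o(1)$. Outside $\widetilde{\O}_\e$ I rewrite $\partial^2S/\partial x_i\partial x_j=-\partial^2S/\partial x_i\partial y_j$ and integrate by parts in $y_j$; the boundary piece on $\partial\O$ vanishes since $u_\e=u_0=0$ there, the one on $\partial B(P,|x-P|^{1+\alpha})$ is small by direct size estimate, and the remaining volume integral involves $\partial_{y_j}p_\e=f'(u_\e)\partial_{y_j}u_\e-f'(u_0)\partial_{y_j}u_0$, which by H\"older continuity of $f'$, Proposition \ref{B16}, and the $C^1$ expansion of Proposition \ref{prop2.4} (giving $\nabla u_\e-\nabla u_0=-(u_0(P)+o(1))(y-P)/(\log\e\cdot|y-P|^2)+o(1)$) produces the claimed bound $o(1/(|x-P|^2|\log\e|))+o(1)$.

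For $\overline{K}_1$, I differentiate $H_0$ from \eqref{new1} twice in $w$ and check that for large $|w|$ and $|z|\ge1$ one has $\partial^2H_0/\partial w_i\partial w_j=O(1/|w|^2)$, hence $O(\e^2/|x-P|^2)$. Inserting this into $\overline{K}_1$ and using that $p_\e\to 0$ in $L^1_{\mathrm{loc}}(\O)$ (by Proposition \ref{B16} near $P$ and Lemma \ref{lemma-1} away from $P$), a split of the integration region into a small neighborhood of $P$ and its complement produces the rate $o(1/(|x-P|^2|\log\e|))$. For $\overline{K}_2$, the function $\partial^2E_\e/\partial w_i\partial w_j-\partial^2H_0/\partial w_i\partial w_j$ is harmonic in $z\in(\O-P)/\e\setminus B(0,1)$, vanishes on $\partial B(0,1)$, and by the same size estimate is $O(1/|w|^2)$ on $(\partial\O-P)/\e$; the maximum principle then propagates this bound throughout, and integration against $p_\e$ gives the required estimate.

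The main obstacle is extracting the extra factor $1/|\log\e|$ present in the target bound. In the $N\ge3$ proof this improvement is automatic from the $\e^{N-2}$ scaling, but here it must be harvested from the logarithmic corrections in $N=2$ (precisely those identified in Lemma \ref{G3}, Lemma \ref{B3}, and Proposition \ref{prop2.4}) combined with the $o(1)$ smallness of $p_\e$ away from $\partial B(P,\e)$. The bookkeeping in each of the three pieces, in particular the interplay between $\partial S/\partial x_i$ and the logarithmic gradient correction of $u_\e-u_0$, is the delicate point that must be handled carefully to reach exactly the form $o(1/(|x-P|^2|\log\e|))+o(1)$.
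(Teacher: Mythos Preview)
Your treatment of $\overline{K}_0$ is essentially the paper's (the exponent $1+\alpha$ versus the paper's $2-\alpha$ is a harmless reparametrization), and the integration-by-parts argument together with Proposition~\ref{prop2.4} does give the required $o\big(1/(|x-P|^2|\log\e|)\big)+o(1)$ there.

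The genuine gap is in your handling of $\overline{K}_1$ and $\overline{K}_2$. The crude bound $\partial^2 H_0/\partial w_i\partial w_j=O(1/|w|^2)$ only yields
\[
|\overline{K}_1|\le \frac{C}{|x-P|^2}\int_{\Omega_\e}|p_\e(y)|\,dy,
\]
and your ``split of the integration region'' gives at best $\int|p_\e|\to0$, \emph{not} $\int|p_\e|=o(1/|\log\e|)$. Away from $P$ you only know $p_\e\to0$ uniformly, with no rate; the $o(1)$ in \eqref{2019-11-15-01} comes from \eqref{B11}, which carries no quantitative information. So your argument produces $o(1/|x-P|^2)$, strictly weaker than the target $o\big(1/(|x-P|^2|\log\e|)\big)$. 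The same objection applies verbatim to your maximum-principle bound for $\overline{K}_2$.

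What the paper does instead---and what your proposal is missing---is a \emph{cancellation}. One computes the second derivative of $H_0$ more precisely,
\[
\frac{\partial^2 H_0(w,z)}{\partial w_i\partial w_j}=\frac{1}{2\pi}\Big(\frac{\delta_{ij}}{|w|^2}-\frac{2w_iw_j}{|w|^4}\Big)+O\Big(\frac{1}{|w|^3|z|}\Big),
\]
so that the leading part of $\overline{K}_1$ is $\frac{1}{2\pi}\big(\frac{\delta_{ij}}{|x-P|^2}-\frac{2(x_i-P_i)(x_j-P_j)}{|x-P|^4}\big)\int_{\Omega_\e}p_\e$. For $\overline{K}_2$ one subtracts the harmonic function $\frac{1}{2\pi}\big(\frac{\delta_{ij}}{|w|^2}-\frac{2w_iw_j}{|w|^4}\big)\frac{\log|z|}{\log\e}$ before applying the maximum principle; this reveals that $\overline{K}_2$ carries \emph{exactly the opposite} leading term, so that
\[
\overline{K}_1+\overline{K}_2=o\Big(\frac{1}{|x-P|^2|\log\e|}\Big).
\]
The extra $1/|\log\e|$ is therefore not ``harvested'' from the smallness of $p_\e$ but comes structurally from this cancellation and from the $\log|z|/\log\e$ correction---the same mechanism that already appeared in Lemma~\ref{G3} and in the $N=2$ part of Lemma~\ref{B3}. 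Without identifying this cancellation, the argument you sketch cannot reach the stated bound.
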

\begin{rem}
Analogously to Remark \ref{re}, since critical points $x_\e$ satisfy $\frac{C_1}{|\log\e|} \leq  |x_\e-P| \leq \frac{C_2}{|\log\e|}$ ($\delta=1$ in Lemma \ref{B18}) our assumption is not restrictive.
\end{rem}
\begin{proof}[Proof of Lemma \ref{B18}]
Our starting point is again formula \eqref{2019-12-10-41} as $N=2$. As in the case $N\ge3$ we estimate the three terms.
\vskip0.2cm
\noindent\underline{Estimate of $\overline{K}_0(w)$}
\vskip0.2cm

As in Proposition \ref{B16} we have that
$p_\e(x)=o(1)$.
Now we estimate the integral in  $\overline{K}_0(w)$.
\begin{equation*}
\begin{split}
\int_{\O_\e} &\frac{\partial^2  S(x,y)}{\partial x_i\partial x_j}\big(p_\e(y)-p_\e(x)\big)dy
\\=& \int_{\O_\e\backslash \widetilde{\O}_\e} \frac{\partial^2  S(x,y)}{\partial x_i\partial x_j}\big(p_\e(y)-p_\e(x)\big)dy
+\int_{\widetilde{\O}_\e} \frac{\partial^2  S(x,y)}{\partial x_i\partial x_j}\big(p_\e(y)-p_\e(x)\big)dy,
\end{split}\end{equation*}
where $\widetilde{\O}_\e=B\big(P,|x-P|^{2-\alpha}\big)\backslash B(P,\e)$ and $\alpha\in(0,1)$.

Since
$\frac{\partial^2  S(x,y)}{\partial x_i\partial x_j}=O\left(\frac{1}{|x-P|^2}\right)$ for $y\in \widetilde{\O}_\e$
and $p_\e$ is bounded in $\Omega_\e$, we find
\begin{equation*}
 \int_{\widetilde{\O}_\e} \frac{\partial^2  S(x,y)}{\partial x_i\partial x_j}\big(p_\e(y)-p_\e(x)\big)dy
 =O\left(|x-P|^{2(1-\alpha)}\right)=o(1).
 \end{equation*}
Next, arguing as in \eqref{B19}, we know
\begin{equation*}
\begin{split}
\int_{\O_\e\backslash \widetilde{\O}_\e} &\frac{\partial^2  S(x,y)}{\partial x_i\partial x_j}\big(p_\e(y)-p_\e(x)\big)dy
\\=&
-\int_{\partial \big(\O_\e\backslash \widetilde{\O}_\e\big)} \frac{\partial  S(x,y)}{\partial x_i }\big(p_\e(y)-p_\e(x)\big)\nu_jd\sigma(y)
+\int_{\O_\e\backslash \widetilde{\O}_\e} \frac{\partial S(x,y)}{\partial x_i}\frac{\partial p_\e(y)}{\partial y_j}dy
\\=&
\underbrace{O\Big(|p_\e(x)|\Big) +O\Big( |x-P|^{\alpha}\Big)}_{=o(1)}+\int_{\O_\e\backslash \widetilde{\O}_\e} \frac{\partial S(x,y)}{\partial x_i}\frac{\partial p_\e(y)}{\partial y_j}dy.
\end{split}\end{equation*}
The last integral is estimated as in \eqref{B20},
\begin{equation*}
\begin{split}
\int_{\O_\e\backslash \widetilde{\O}_\e} &\frac{\partial S(x,y)}{\partial x_i}\frac{\partial p_\e(y)}{\partial y_j}dy\\=&
\underbrace{O\left(\int_{\O_\e\backslash \widetilde{\O}_\e} \frac{|
 u_\e(y)-u_0(y)|^\g}{|x-y|^{N-1}}dy\right)}_{=o(1)}+
\int_{\O_\e\backslash \widetilde{\O}_\e} \frac{\partial S(x,y)}{\partial x_i }
f'\big(u_\e(y)\big)
\Big(\frac{ \partial u_\e(y)}{\partial y_j}-\frac{ \partial u_0(y)}{\partial y_j}
\Big) dy.
\end{split}\end{equation*}
Finally since $\frac{|y-P|}\e\ge\frac{|x-P|^{2-\a}}\e\ge\frac C{\e|\log\e|^{(2-\a)\delta}}\to\infty$
 as in \eqref{B21}, we get for $y\in \O_\e\backslash \widetilde{\O}_\e$,
\begin{equation*}
\begin{split}
 \frac{ \partial u_\e(y)}{\partial y_j}-\frac{ \partial u_0(y)}{\partial y_j}
 =O\left(\frac{1}{|y-P|\cdot|\log \e|}\right)+o(1)
 = O\Big(\frac{1}{|x-P|^{2-\alpha}\cdot|\log \e|}\Big)+o(1),
\end{split}\end{equation*}
and
\begin{equation*}
\begin{split}
\int_{\O_\e\backslash \widetilde{\O}_\e} &\frac{\partial S(x,y)}{\partial x_i }
f'\big(u_\e(y)\big)
\left(\frac{ \partial u_\e(y)}{\partial y_j}-\frac{ \partial u_0(y)}{\partial y_j}
\right) dy \\&= O\Big(\frac{1}{|x-P|^{2-\alpha}\cdot|\log \e|}\Big)+o(1)
=o\Big(\frac{1}{|x-P|^{2}\cdot|\log \e|}\Big)+o(1).
\end{split}\end{equation*}
Hence the above estimates show
\begin{equation}\label{B24}
\overline{K}_0(w)=\int_{\O_\e}\frac{\partial^2  S(x,y)}{\partial x_i\partial x_j}\big(p_\e(y)-p_\e(x)\big)dy+o(1)=o\left(\frac{1}{|x-P|^{2}\cdot|\log \e|}\right)+o(1).
\end{equation}
\vskip0.2cm
\noindent \underline{Estimate of $\overline{K}_1(w)$}
\vskip0.2cm
We have that
\begin{equation*}
\begin{split}
 \frac{\partial^2H_0\left(w,z\right)}{\partial w_i\partial w_j}=&
\frac{1}{2\pi} \left(
 \frac{|z|^2\delta_{ij}}{|w|^2|z|^2-2\langle w,z\rangle+1}-
 \frac{2(w_i|z|^2-z_i)(w_j|z|^2-z_j)}{\left(|w|^2|z|^2-2\langle w,z\rangle+1\right)^{2}}\right)\\=&
 \frac{1}{2\pi} \Big(
  \frac{ \delta_{ij}}{|w|^2}-
  \frac{2w_iw_j}{|w|^4}\Big)+O\Big(\frac{1}{|w|^3|z|}\Big),
\end{split}
\end{equation*}
and then
\begin{equation*}
\begin{split}
\overline{K}_1(w)=&\frac1{\e^2}\int_{\Omega_\varepsilon}
 \frac{\partial^2H_0\left(w,\frac{y-P}\e\right)}{\partial w_i\partial w_j}p_\e(y)
dy\\=
&\frac{1}{2\pi} \Big(\frac{ \delta_{ij}}{|x-P|^2}-\frac{2(x_i-P_i)
 (x_j-P_j)}{|x-P|^4}\Big)
  \int_{\Omega_\varepsilon}p_\varepsilon(y)
 dy+O\Big(\frac{\e^3}{|x-P|^3}\Big).
\end{split}
\end{equation*}
We will see that this term will cancel with the main term of $\overline{K}_2(w)$.
\vskip0.2cm
\noindent\underline{Estimate of $\overline{K}_2(w)$}
\vskip0.2cm
Let us introduce the function
$$
\hat{K}_2(w,z):=
   \Big(\frac{\partial^2 E_\varepsilon\left(w,z\right)}{\partial w_i\partial w_j}
   -\frac{\partial^2 H_0\left(w,z\right)}{\partial w_i\partial w_j}\Big)
-\frac{1}{2\pi} \Big(\frac{\delta_{ij}}{|w|^2}-\frac{2w_iw_j}{|w|^4}\Big)\frac{\log
|z|}{\log\varepsilon},
$$
which verifies
 \begin{equation*}
\begin{cases}
\Delta_z\hat{K}_2(w,z)=0&~\mbox{for}~z\in \frac{\O-P}\e\setminus B(0,1),\\
\hat{K}_2(w,z)=0&~\mbox{for}~z\in\partial B(0,1),\\
\hat{K}_2(w,z)=
   -\frac{\partial^2 H_0\left(w,z\right)}{\partial w_i\partial w_j}
-\frac{1}{2\pi} \Big(\frac{ \delta_{ij}}{|w|^2}-\frac{2w_iw_j}{|w|^4}\Big)\frac{\log
|z|}{\log\varepsilon}
&~\mbox{for}~z\in\frac{\partial\O-P}\e.
\end{cases}
\end{equation*}
Then for any $z\in\frac{\partial\O-P}\e$, we have that
\begin{equation*}
   -\frac{\partial^2 H_0\left(w,z\right)}{\partial w_i\partial w_j}
-\frac{1}{2\pi} \Big(\frac{\delta_{ij}}{|w|^2}-\frac{2w_iw_j}{|w|^4}\Big)\frac{\log
|z|}{\log\varepsilon}
=O\Big(\frac{1}{|w|^2\cdot |\log \varepsilon|}\Big).
\end{equation*}
Hence by the maximum principle for harmonic function, for any $y\in \O_\varepsilon$
\big(or $z\in \frac{ \O-P}\e \backslash B(0,1)$\big),
  we deduce that
 \begin{equation*}
 \begin{split}
\frac{\partial^2 E_\varepsilon\left(w,z\right)}{\partial w_i\partial w_j}
 -\frac{\partial^2 H_0\left(w,z\right)}{\partial w_i\partial w_j}
=&
 \frac{1}{2\pi} \Big(\frac{ \delta_{ij}}{|w|^2}-\frac{2w_iw_j}{|w|^4}\Big)\frac{\log
  |z|}{\log\varepsilon}+
O\Big(\frac{\varepsilon^2}{|x-P|^2\cdot|\log \varepsilon|}\Big)\\=& -\frac{1}{2\pi} \Big(\frac{ \delta_{ij}}{|w|^2}-\frac{2w_iw_j}{|w|^4}\Big)
+O\Big(\frac{\varepsilon^2}{|x-P|^2\cdot|\log \varepsilon|}\Big).
   \end{split}\end{equation*}
Then we obtain
 \begin{equation*}
 \begin{split}
\overline{K}_2(w)=&\frac1{\e^{2}}\int_{\Omega_\varepsilon}
\Big(\frac{\partial^2G_0\left(w,z\right)}{\partial w_i\partial w_j}
-\frac{\partial^2 S\left(w,z\right)}{\partial w_i\partial w_j}\Big) \Big|_{z=\frac{y-P}
{\varepsilon}}p_\varepsilon(y)dy  \\=&-\frac{1}{2\pi} \Big(\frac{\delta_{ij}}{|x-P|^2}-\frac{2(x_i-P_i)
 (x_j-P_j)}{|x-P|^4}\Big)
  \int_{\Omega_\varepsilon}p_\varepsilon(y)
 dy+o\left(\frac{1}{|x-P|^2\cdot|\log \varepsilon|}\right),
\end{split}\end{equation*}
which ends the estimate of $\overline{K}_2(w)$.

Now we observe that
 \begin{equation*}
\overline{K}_1(w)+\overline{K}_2(w)=o\left(\frac{1}{|x-P|^2\cdot|\log \varepsilon|}\right)
\end{equation*}
which jointly with \eqref{B24} gives \eqref{2019-11-25-35a}. This ends the proof.
\end{proof}

Now we are in position to give the estimate of the second derivative of $u_\e$.
\begin{prop}\label{B35}
Let  $u_0$ and $u_\varepsilon$ solutions to \eqref{1} and \eqref{aa2} respectively. Then we have the following estimates,
\vskip 0.2cm\noindent
If  $N\geq 3$ and $
C_1\e^{\beta}\leq  |x-P| \leq C_2\e^{\beta}$
for some $C_1,C_2>0$ and $\beta\in (0,\frac{N-1}{N})$, then it holds
\begin{equation}\label{c2019-11-15-01}
\frac{\partial^2 u_\e(x)}{\partial x_i\partial x_j}=\frac{\partial^2 u_0(x)}{\partial x_i\partial x_j}
+o(1)+
\frac{(N-2)u_0(P)}{|x-P|^N}\varepsilon^{N-2}\left(\delta_{ij}-\frac{N(x_i-P_i)(x_j-P_j)}
{|x-P|^2}+o\big(1\big)\right).
\end{equation}
If $N=2$ and $\frac{C_1}{|\log\e|^{\delta}}\leq  |x-P| \leq \frac{C_2}{|\log\e|^{\delta}}$ for some $C_1,C_2,\delta>0$, then it holds
\begin{equation}\label{d2019-11-15-01}
\frac{\partial^2 u_\e(x)}{\partial x_i\partial x_j}=\frac{\partial^2 u_0(x)}{\partial x_i\partial x_j}
+o(1)+
\frac{ u_0(P)}{|\log \varepsilon| \cdot |x-P|^2} \left(\delta_{ij}-\frac{2(x_i-P_i)(x_j-P_j)}{|x-P|^2}+o\left(\frac{1}{|x-P|}\right)\right).
\end{equation}
\end{prop}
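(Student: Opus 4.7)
My plan is to differentiate the master splitting \eqref{m18}, $u_\e(x)=u_0(x)+I_\e(w)+J_\e(w)+A_\e(x)$ with $w=(x-P)/\e$, twice in $x$. Since $\partial_{x_k}=\e^{-1}\partial_{w_k}$ on functions written in $w$, this produces
\begin{equation*}
\frac{\partial^2 u_\e(x)}{\partial x_i\partial x_j} = \frac{\partial^2 u_0(x)}{\partial x_i\partial x_j} + \frac{1}{\e^2}\frac{\partial^2 I_\e(w)}{\partial w_i\partial w_j} + \frac{1}{\e^2}\frac{\partial^2 J_\e(w)}{\partial w_i\partial w_j} + \frac{\partial^2 A_\e(x)}{\partial x_i\partial x_j}.
\end{equation*}
The idea is then to plug into this identity the Hessian estimates \eqref{2019-11-15-05b} for $I_\e$ and \eqref{B15} for $J_\e$, together with the second-derivative bounds for $A_\e$ from Lemmas \ref{B17}--\ref{B18}, and verify that everything not appearing in the displayed main term of \eqref{c2019-11-15-01}/\eqref{d2019-11-15-01} is absorbed in the stated $o$-terms on the $x$-annulus prescribed by the hypothesis.

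For $N\ge 3$ the dominant contribution comes from $I_\e$. Converting $w=(x-P)/\e$, the leading part of \eqref{2019-11-15-05b} produces exactly the expression displayed in \eqref{c2019-11-15-01}, while its remainder $O(\e/|w|^{N+1})$ becomes, after multiplication by $\e^{-2}$, $O(\e^N/|x-P|^{N+1}) = (\e^2/|x-P|)\cdot(\e^{N-2}/|x-P|^N)$, which is $o(\e^{N-2}/|x-P|^N)$ because $\e^{2-\beta}\to 0$ in the range $\beta<(N-1)/N<2$. The $J_\e$ contribution is $\e^{-2}\cdot O(\e^{N-2}/|w|^2) = O(\e^{N-2}/|x-P|^2) = |x-P|^{N-2}\cdot O(\e^{N-2}/|x-P|^N)$, which is subleading since $|x-P|\to 0$. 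Finally Lemma \ref{B17} directly gives $\partial^2_{x_ix_j}A_\e = o(1+\e^{N-2}/|x-P|^N)$, and summing the four pieces yields \eqref{c2019-11-15-01}.

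For $N=2$ the roles of $I_\e$ and $J_\e$ switch. The coefficient $N-2$ in the leading part of \eqref{2019-11-15-05b} vanishes, leaving only the subleading error $O(\e/|w|^3)$ which, divided by $\e^2$, becomes $O(\e^2/|x-P|^3)$ and is $o(1/(|x-P|^2|\log\e|))$ under the hypothesis $|x-P|\sim|\log\e|^{-\delta}$. The leading term now comes from \eqref{B15} with $N=2$, which after rescaling produces the advertised $\frac{u_0(P)}{|\log\e|\,|x-P|^2}\bigl(\delta_{ij}-2(x_i-P_i)(x_j-P_j)/|x-P|^2+o(1)\bigr)$. Lemma \ref{B18} supplies $\partial^2_{x_ix_j}A_\e = o(1/(|x-P|^2|\log\e|))+o(1)$, and adding the four contributions gives \eqref{d2019-11-15-01}.

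The main obstacle has in fact been dispatched before reaching this proposition: it lies in Lemmas \ref{B17}--\ref{B18}, where the H\"older-type bound on the Newtonian potential of $p_\e=f(u_\e)-f(u_0)$ and the careful control of the regular part $E_\e$ against $H_0$ in the rescaled exterior domain had to be obtained. At the level of Proposition \ref{B35} the only conceptual point is the dimension-dependent identification of which splitting term supplies the leading second-order behavior --- the Newtonian piece $I_\e$ for $N\ge 3$ and the boundary-correction piece $J_\e$ (arising from $\widetilde M_\e$) for $N=2$ --- after which everything reduces to careful bookkeeping of the error terms against the main terms $\e^{N-2}/|x-P|^N$ and $1/(|\log\e|\,|x-P|^2)$ respectively.
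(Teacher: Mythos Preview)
Your approach is correct and coincides with the paper's own proof, which simply cites \eqref{2019-11-15-05b}, \eqref{B15}, and Lemmas \ref{B17}--\ref{B18} and leaves the bookkeeping implicit; you have made that bookkeeping explicit and tracked the error terms accurately, including the key observation that for $N\ge3$ the $J_\e$ Hessian contributes only at order $\e^{N-2}/|x-P|^2=|x-P|^{N-2}\cdot\e^{N-2}/|x-P|^N$ while for $N=2$ the factor $(N-2)$ kills the $I_\e$ leading term and $J_\e$ takes over.
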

\begin{proof}
If $N\ge3$ we have that  \eqref{c2019-11-15-01} follows by \eqref{2019-11-15-05b}, \eqref{B15}
and \eqref{2019-11-25-35}. If $N=2$ we have that \eqref{d2019-11-15-01} follows by \eqref{2019-11-15-05b}, \eqref{B15}
and \eqref{2019-11-25-35a}.
\end{proof}
\vfill\eject
\section{ Proofs of main theorems when $\nabla u_0(P)\neq 0$}\label{s5}

In this section we prove our results when  $\nabla u_0(P)\neq 0$. Our first proposition is quite known but we did not find any reference. So we give a complete proof.
\begin{prop}\label{prop3.1}
Assume that $x_0$ is a nondegenerate critical point of $v\in C^2\big(B(x_0,1)\big)$
and $v_\e\in C^2\big(B(x_0,1)\big)$ verifies that $v_\e\to v$ in $C^2\big(B(x_0,1)\big)$. Then
 $v_\e$ has  a unique critical point $x_\e$ in $B(x_0,r)$ which is also nondegenerate for $r$ small enough.
\end{prop}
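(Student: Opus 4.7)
\bigskip

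\noindent\textbf{Proof plan.} The statement is a standard persistence/uniqueness result for nondegenerate critical points under $C^2$-perturbation. My plan is to combine a quantitative inverse-function-theorem argument with the Brouwer degree.

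\emph{Step 1 (setting up the nondegeneracy window).} Since $x_0$ is a nondegenerate critical point of $v$, the Hessian $\nabla^2 v(x_0)$ is invertible. By continuity of $\nabla^2 v$ there exist $r_0\in(0,1)$ and $c>0$ such that
\[
\bigl\|\bigl(\nabla^2 v(x)\bigr)^{-1}\bigr\|\le \tfrac{1}{c}\q\hbox{for all }x\in \overline{B(x_0,r_0)}.
\]
The $C^2$-convergence $v_\e\to v$ then gives, for $\e$ small enough,
\[
\bigl\|\nabla^2 v_\e(x)-\nabla^2 v(x)\bigr\|<\tfrac{c}{2}\q\hbox{uniformly in }\overline{B(x_0,r_0)},
\]
so that $\nabla^2 v_\e(x)$ is invertible and uniformly non-singular on $\overline{B(x_0,r_0)}$. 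In particular, any zero of $\nabla v_\e$ in $B(x_0,r_0)$ is automatically nondegenerate.

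\emph{Step 2 (existence via degree).} By the inverse function theorem, $\nabla v$ is a local diffeomorphism at $x_0$ with $\nabla v(x_0)=0$, hence $\nabla v(x)\ne 0$ on $\partial B(x_0,r)$ for all small $r\in(0,r_0]$, and
\[
\deg\bigl(\nabla v,B(x_0,r),0\bigr)=\mrm{sign}\bigl(\det \nabla^2 v(x_0)\bigr)=\pm 1.
\]
The $C^1$-convergence $\nabla v_\e\to\nabla v$ yields $\nabla v_\e\ne 0$ on $\partial B(x_0,r)$ for $\e$ small, and by homotopy invariance $\deg(\nabla v_\e,B(x_0,r),0)=\pm 1$. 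Consequently $\nabla v_\e$ has at least one zero $x_\e\in B(x_0,r)$.

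\emph{Step 3 (uniqueness via the fundamental theorem of calculus).} Suppose $x_\e^1,x_\e^2\in B(x_0,r)$ are two zeros of $\nabla v_\e$. Writing
\[
0=\nabla v_\e(x_\e^2)-\nabla v_\e(x_\e^1)=\left(\int_0^1 \nabla^2 v_\e\bigl(x_\e^1+t(x_\e^2-x_\e^1)\bigr)\,dt\right)(x_\e^2-x_\e^1),
\]
and observing that, by Step~1, the averaged Hessian is a small perturbation of $\nabla^2 v(x_0)$ and hence invertible, we conclude $x_\e^1=x_\e^2$. Combined with Step~1, the unique critical point $x_\e$ obtained in Step~2 is nondegenerate.

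The main (and only mild) obstacle is keeping track of the two smallness conditions: first fix $r_0$ so that the Hessian of $v$ stays uniformly invertible on $B(x_0,r_0)$, then choose $r\in(0,r_0]$ so that $\nabla v$ does not vanish on $\partial B(x_0,r)$, and finally shrink $\e$ so that the $C^2$-perturbation is small enough to preserve both the non-vanishing on the boundary and the invertibility of the averaged Hessian. No deeper difficulty is expected.
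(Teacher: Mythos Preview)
Your proof is correct. The existence part (Step~2) matches the paper's use of Brouwer degree. For uniqueness, however, you and the paper diverge: the paper stays entirely within degree theory, arguing that by the $C^2$-convergence every zero of $\nabla v_\e$ in $B(x_0,r)$ is nondegenerate and carries the same index sign as $x_0$, so that the degree formula $\deg(\nabla v_\e,B(x_0,r),0)=\sum_{x\in\mathcal{K}_\e}index_x(\nabla v_\e)=\pm n_\e$ forces $n_\e=1$. Your Step~3 instead uses the integral mean-value identity and the invertibility of the averaged Hessian to get injectivity of $\nabla v_\e$ directly. Your route is slightly more self-contained (it does not invoke the additivity of degree over isolated zeros) and simultaneously yields the quantitative injectivity of $\nabla v_\e$ on the whole ball; the paper's route is a bit cleaner conceptually and generalizes verbatim to the vector-field setting of their Remark~\ref{remdeg}, where no Hessian symmetry is available but the index counting still works.
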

\begin{proof}
It is not restrictive to suppose that $index_{x_0}(\nabla u_0)=1$. Since $x_0$ is a nondegenerate critical point of $u$ then it is isolated in $B(x_0,r)$ for $r$ small enough. Moreover by the convergence of $u_\e$ to $u_0$ we get
\begin{equation}\label{deg}
deg\big(\nabla u_\e,0,B(x_0,r)\big)=deg\big(\nabla u_0,0,B(x_0,r)\big)=index_{x_0}(\nabla u_0)=1.
\end{equation}
This gives the existence of a critical point $x_\e$ of $u_\e$ and of course $x_\e\to x_0$. Let us show the uniqueness of the critical point $x_\e$. By the $C^2$ convergence of $u_\e$ to $u_0$ we get that any critical point $x_\e$ is nondegenerate and therefore if $\mathcal{K}_\e=\{x\in B(x_0,r):\nabla u_\e(x)=0\}$ we have that $\sharp\mathcal{K}_\e=n_\e<+\infty$. Moreover again by the $C^2$ convergence of $u_\e$ to $u_0$ we get that $index_{x_\e}(\nabla u_0)=1$. Finally we have that
$$deg\big(\nabla u_\e,0,B(x_0,r)\big)=\sum_{x\in\mathcal{K}_\e}index_{x}(\nabla u_\e)=n_\e$$
which jointly with \eqref{deg}  gives $n_\e=1$. This proves the uniqueness of $x_\e$ .
\end{proof}
\begin{rem}\label{remdeg}
Using the same proof of the previous proposition it is possible to prove that if a smooth vector field $V:B(x_0,1)\subset\R^N\to\R^N$ verifies $V(x_0)=0$ and $\det Jac\big(V(x_0)\big)\ne0$ then any approximating vector field $V_\e:B(x_0,1)\subset\R^N\to\R^N$ such that $V_\e\to V$ in $C^1\big(B(x_0,1)\big)$ admits a unique zero $x_\e$ such that $x_\e\to x_0$.
\end{rem}

Next lemma will be also useful.
\begin{lemma}\label{G9}
Let us consider the matrix $B=\left(\delta_{ij}-N\frac{\xi_i\xi_j}{|\xi|^2}\right)_{1\leq i,j\leq N}$, $\xi=(\xi_1,..,\xi_N)\ne0$ and $N\ge2$. Then we have that the eigenvalues of $B$ are given by
\begin{equation*}
\lambda_1=1-N,\quad\lambda_2=..=\lambda_N=1.
\end{equation*}
\end{lemma}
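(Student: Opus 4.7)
The plan is to observe that $B$ is a rank-one perturbation of the identity: writing $B = I - N\,\xi\xi^T/|\xi|^2$, the matrix is the identity minus a multiple of the orthogonal projector onto the line spanned by $\xi$. All of the spectral information follows from how this projector acts on $\xi$ and on its orthogonal complement.

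First I would exhibit $\xi$ itself as an eigenvector. Direct computation gives
\begin{equation*}
B\xi = \xi - N\,\frac{\xi\,(\xi^T\xi)}{|\xi|^2} = \xi - N\xi = (1-N)\xi,
\end{equation*}
so $\xi$ is an eigenvector with eigenvalue $1-N$, which is $\lambda_1$.

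Next I would handle the orthogonal complement. For any $\eta \in \R^N$ with $\eta \cdot \xi = 0$,
\begin{equation*}
B\eta = \eta - N\,\frac{\xi\,(\xi^T\eta)}{|\xi|^2} = \eta,
\end{equation*}
so every vector in the hyperplane $\xi^\perp$ (which has dimension $N-1$) is an eigenvector with eigenvalue $1$. Combined with the previous step, this produces $N$ eigenvalues counted with multiplicity, namely $1-N$ once and $1$ with multiplicity $N-1$, exhausting the spectrum of the $N\times N$ matrix $B$.

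There is essentially no obstacle here: the matrix is $I$ plus a rank-one symmetric operator, and the two-step argument above is the standard way to read off the eigenvalues of such a perturbation. The only thing worth noting is that $|\xi| \neq 0$ is required so that the projector $\xi\xi^T/|\xi|^2$ is well defined, which is exactly the hypothesis $\xi \neq 0$ in the statement.
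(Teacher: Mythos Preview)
Your proof is correct and follows essentially the same approach as the paper: both show that $\xi$ is an eigenvector with eigenvalue $1-N$ and that every vector orthogonal to $\xi$ is an eigenvector with eigenvalue $1$. The only cosmetic difference is that the paper writes down an explicit $(N-1)$-parameter family of vectors in $\xi^\perp$ (after fixing a nonzero coordinate of $\xi$), whereas you argue abstractly via the projector $\xi\xi^T/|\xi|^2$; your formulation is in fact a bit cleaner.
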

\begin{proof}
We have that the vector $v_1=\xi$ is the first eigenvector of $B$ and a straightforward computation shows that $\lambda_1=1-N$. Concerning the other eigenvalues, since $\xi\ne0$ we can assume that
$\xi\ne0$. Then the vector $v=\left(\frac{\sum_{j=2}^N\xi_jt_j}{\xi_1},t_2,..,t_N\right)$ with $t_2,..,t_N$ real numbers, is an eigenvector with multiplicity $N-1$ of the matrix $B$. Again a direct computation shows that $\lambda_2=..=\lambda_N=1.$
\end{proof}
Let us denote by $\mathcal{C}$ the set of critical point of $u_0$, i.e.
\begin{equation*}
\mathcal{C}=\Big\{x\in\O \hbox{ such that }\nabla u_0(x)=0\Big\}.
\end{equation*}
Since $\nabla u_0(P)\ne0$ we get that there exists $d>0$ such that $\mathcal{C}\bigcap B(P,d)=\varnothing$.

\smallskip

\begin{proof}[\textbf{Proof of Theorem \ref{I7}}]
Let us define the vector field
$F(y)=\big(F_1(y),\cdots,F_N(y)\big)$ on $W$ as
\begin{equation*}
F_i(y)=
\begin{cases}\frac{\partial u_0(P)}{\partial x_i}+(N-2)u_0(P) \frac{y_i}{|y|^{N}}~&\mbox{if}~N\geq 3,\\
\frac{\partial u_0(P)}{\partial x_i}+\frac{y_i}{2|y|^2}u_0(P)~&\mbox{if}~N=2,
\end{cases}
\end{equation*}
and
 \begin{equation*}
W=
\begin{cases}B\left(0,\frac d{\e^\frac{N-2}{N-1}}\right)\backslash B\left(0,\varepsilon^\frac1{N-1}\right)~&\mbox{if}~N\geq 3,\\
B\left(0, d|\ln \e|\right)\backslash B\left(0,\varepsilon|\ln \e|\right)~&\mbox{if}~N=2.
\end{cases}
\end{equation*}
We have that, for $C_N, C_2$ as in \eqref{I23},
\begin{equation*}
y_0=
\begin{cases}
C_N\nabla u_0(P)&\mbox{for}~N\geq 3,\\
C_2\nabla u_0(P) &\mbox{for}~N=2,
\end{cases}
\end{equation*}
is the unique zero of the vector field $F$. Moreover the index of $F$ at $y_0$ is given by $index_{y_0}F=sgn\Big(\det Jac\big(F(y_\e)\big)\Big)$ and
\begin{equation*}
\begin{split}
 \det Jac\big(F(y_\e)\big)=&\det\left[-\frac1{C_N}\left(\delta_{ij}-N\frac{\xi_i\xi_j}{|\xi|^2}\right)_{1\leq i,j\leq N}\right]\\=&
 \left(-\frac1{C_N}\right)^N\displaystyle\Pi^N_{i=1}\la_i=\left(-\frac1{C_N}\right)^N(1-N)<0,
\end{split}
\end{equation*}
where  $\la_i$($i=1,..,N$)  are the eigenvalues of the matrix $\left(\delta_{ij}-N\frac{\xi_i\xi_j}{|\xi|^2}\right)_{1\leq i,j\leq N}$.
Hence
\begin{equation}\label{ind}
index_{y_0}F=-1.
\end{equation}
Next let us introduce the vector field
$F_\varepsilon(y)=\big(F_{\varepsilon,1}(y),\cdots,F_{\varepsilon,N}(y)\big)$ on $W$
as
$$F_{\varepsilon,i}(y)=\begin{cases}\frac{\partial u_\e}{\partial x_i}\big(P+\e^\frac{N-2}{N-1}y\big)~&\mbox{if}~N\geq 3,\\
\frac{\partial u_\e}{\partial x_i}\big(P+ \frac{1}{|\ln \e|}y\big)~&\mbox{if}~N=2.
\end{cases}
$$
By Proposition \ref{B10} we have that $F_{\varepsilon,i}(y)=0$ if and only if  $y_\e=y_0+o(1)$. Moreover by Propositions \ref{prop2.4} and \ref{B35} we have that $F_\e\to F$ in $C^1(B(y_0,1).$ Hence Remark \ref{remdeg} applies and then $F_\varepsilon(y)$  has a unique zero $y_\e$ close to $y_0$ in a ball $B(y_0,r)$. Using again Proposition \ref{B10} we get that $y_\e$ is the unique zero of $F_\varepsilon(y)$ in $W$. This is equivalent to say that
\begin{equation*}
x_\e=P+
\begin{cases}
 \e^\frac{N-2}{N-1}\big(y_0+o(1)\big)&~\mbox{for}~N\geq 3,\\
 \frac{1}{|\log \varepsilon|}\big(y_0+o(1)\big)&~\mbox{for}~N=2,
\end{cases}
\end{equation*}
 is the unique zero in $B(P,d)\backslash B(P,\varepsilon)$ which proves \eqref{2019-11-27-03}. Moreover  by \eqref{ind} we obtain that the index of $\nabla u_\e$ at $x_\e$ is $1$ and the uniqueness of $x_\e$ imply that $x_\e$ is a saddle point. Finally \eqref{I16} follows by Proposition \ref{B16}.
\end{proof}
\begin{proof} [\textbf{Proof of Thereom \ref{th1-1}}]
We write $\Omega_\varepsilon=\mathcal{K}_1\bigcup \mathcal{K}_2 \bigcup \Big(B(P,d)\backslash B(P,\varepsilon)\Big)$ with $$\mathcal{K}_1=\{x,~dist~(x,\mathcal{C})\leq d\}~\mbox{and}~\mathcal{K}_2:=\Omega_\varepsilon\backslash \big(\mathcal{K}_1\bigcup B(P,d)\big),$$
for some small  fixed $d>0$.

First, Proposition \ref{prop3.1} gives us that
$$\sharp\{\mbox{critical points of $u_\varepsilon$ in $\mathcal{K}_1$}\}=\sharp\{\mathcal{C}\},$$
and from Theorem \ref{I7}, we get that
$$\sharp\{\mbox{critical points of $u_\varepsilon$ in $B(P,d)\backslash B(P,\varepsilon)$}\}=1.$$
Finally, using Lemma \ref{lemma-1} we find that
$$\sharp\{\mbox{critical points of $u_\varepsilon$ in $\mathcal{K}_2$}\}=0.$$
Then from the above discussion, we get \eqref{mainresult}.
Finally by Theorem \ref{I7} we get that  the additional critical point $x_\e$ of $u_\e$ is a saddle point of index $-1$ satisfying \eqref{2019-11-27-03}.
\end{proof}
\vfill\eject

\section{Proofs of the main results when $\nabla u_0(P)=0$ }\label{s7}
In this section we consider the case  $\nabla u_0(P)=0$. Our first aim is to improve the estimate \eqref{2019-11-15-01} . More precisely we want to replace the term $\frac{\partial u_0(x)}{\partial x_i}+o(1)$ with $\sum^N_{j=1}\left(\frac{\partial^2 u_0(P)}{\partial x_i\partial x_j}+o(1)\right)(x_j-P_j)$. This can be done by using that  $\nabla u_0(P)=0$ and Taylor's formula if we show that the term $o(1)$ (coming from \eqref{aa2019-11-25-10}) can be improved to $o(|x-P|)$. This will be done in next lemma, where the condition \eqref{2019-12-26-01} is used.
\begin{lemma}\label{bdL1}
 If $|x-P|\varepsilon^{-1}\rightarrow\infty$, $|x-P|\rightarrow 0$ and
 the nonlinearity $f$ satisfies  the condition \eqref{2019-12-26-01}, then it holds
\begin{equation}\label{2019-11-25-10}
\frac{\partial A_\varepsilon(x)}{\partial x_i}=o\Big(|x-P|\Big)+
\begin{cases}
o\Big(\frac{\varepsilon^{N-2}}{|x-P|^{N-1}}\Big)&~\mbox{for}~N\geq 3,\\
o\Big(\frac{1}{|x-P|\cdot|\log\varepsilon|}\Big)&~\mbox{for}~N=2.
\end{cases}
\end{equation}
\end{lemma}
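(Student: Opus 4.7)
The plan is to revisit the identity already assembled inside the proof of Lemma \ref{adL1}. Specifically, equations \eqref{2019-12-26-02} (for $N\ge 3$) and \eqref{2019-12-26-02ad} (for $N=2$) both express
\begin{equation*}
\frac{\partial A_\varepsilon(x)}{\partial x_i}=\int_{\Omega_\varepsilon}\bigl(f(u_\varepsilon(y))-f(u_0(y))\bigr)\frac{\partial G(x,y)}{\partial x_i}\,dy+\mathcal R_\varepsilon(x),
\end{equation*}
where the remainder reads
\begin{equation*}
\mathcal R_\varepsilon(x)=
\begin{cases}
o\!\left(\frac{\varepsilon^{N-2}}{|x-P|^{N-1}}\right)+o(\varepsilon), & N\ge 3,\\[4pt]
o\!\left(\frac{1}{|x-P|\cdot|\log\varepsilon|}\right), & N=2.
\end{cases}
\end{equation*}
This decomposition was obtained through boundary-layer bounds on the auxiliary harmonic functions $\widetilde\gamma_\varepsilon$ (resp.\ $\widetilde\delta_\varepsilon$) and on the regular part of the Green function; it uses only $L^\infty$-boundedness of $f(u_\varepsilon)-f(u_0)$ and is therefore available verbatim in the present setting.

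With this reduction in hand, the entire content of the new lemma is absorbed into hypothesis \eqref{2019-12-26-01}, which asserts precisely that the remaining integral $\int_{\Omega_\varepsilon}(f(u_\varepsilon)-f(u_0))\partial_{x_i}G(x,y)\,dy$ is $o(|x-P|)$ plus the tail term already tolerated in the conclusion. Plugging this into the decomposition above produces the right-hand side of \eqref{2019-11-25-10}.

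The only bookkeeping step is to check that the stray $o(\varepsilon)$ present in $\mathcal R_\varepsilon$ when $N\ge 3$ is compatible with the target error $o(|x-P|)$. Since by hypothesis $|x-P|/\varepsilon\to\infty$, we have $\varepsilon=o(|x-P|)$ and consequently $o(\varepsilon)=o(|x-P|)$, which closes the argument. I do not expect any real obstacle: the present lemma is in essence a clean relabelling of \eqref{2019-12-26-01}, with the genuine analytic difficulty --- verifying \eqref{2019-12-26-01} itself in concrete cases such as $f\equiv 1$ or the Gidas--Ni--Nirenberg symmetric setting --- deferred to Section \ref{s6}.
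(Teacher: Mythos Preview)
Your proposal is correct and follows essentially the same approach as the paper: both invoke the identities \eqref{2019-12-26-02} and \eqref{2019-12-26-02ad} from Lemma~\ref{adL1} and then apply hypothesis \eqref{2019-12-26-01} directly. Your explicit remark that $o(\varepsilon)=o(|x-P|)$ because $|x-P|/\varepsilon\to\infty$ is a welcome clarification that the paper leaves implicit.
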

\begin{proof}
For $N\geq 3$, from the estimate \eqref{2019-12-26-02} we have that
\begin{equation}\label{2020-02-02-10}
\begin{split}
&\frac{\partial  A_\varepsilon(x)}{\partial x_i}=   \int_{\Omega_\varepsilon}\Big(f\big(u_\varepsilon(y)\big)-f\big(u_0(y)\big)\Big)
\frac{\partial G(x,y)}{\partial x_i}dy+o\left(\frac{\varepsilon^{N-2}}{|x-P|^{N-1}}+\varepsilon\right).
 \end{split}\end{equation}
Then combining \eqref{2019-12-26-01} and \eqref{2020-02-02-10}, we find  \eqref{2019-11-25-10} for $N\geq 3$.

Similarly, for $N=2$, from the estimate \eqref{2019-12-26-02ad} we have
 \begin{equation}\label{2019-12-26-02af}
\begin{split}
 \frac{\partial  A_\varepsilon(x)}{\partial x_i}=   \int_{\Omega_\varepsilon}\Big(f\big(u_\varepsilon(y)\big)-f\big(u_0(y)\big)\Big)
\frac{\partial G(x,y)}{\partial x_i}dy+o\left(\frac{1}{|x-P|\cdot|\log\varepsilon|}\right).
 \end{split}\end{equation}
 Then combining \eqref{2019-12-26-01} and \eqref{2019-12-26-02af}, we find  \eqref{2019-11-25-10} for $N=2$.
\end{proof}
\vskip0.2cm
A consequence of Lemma \ref{bdL1}  is that we can rewrite \eqref{2019-11-15-01} as follows,
\begin{equation}\label{C10}
\frac{\partial u_\e(x)}{\partial x_i}=\sum^N_{j=1}\left(\frac{\partial^2 u_0(P)}{\partial x_i\partial x_j}+o(1)\right)(x_j-P_j)+
 \begin{cases}
(N-2)\big(u_0(P)+o(1)\big)\frac{x_i-P_i}{|x-P|^N}\e^{N-2}&~\mbox{if}~N\geq 3,\\
\big(u_0(P)+o(1)\big)\frac{x_i-P_i}{ |x-P|^2}\frac{1}{|\log\e|}&~\mbox{if}~N=2.
\end{cases}
\end{equation}
From now we assume that
\vskip0.2cm
\centerline{
$P$\em{ is a nondegenerate critical point of the solution $u_0$.}
}
\vskip0.2cm
Next proposition states a necessary condition for critical points of $u_\e$.
\begin{prop}
If $\nabla u_0(P)=0$ and $x_\e$ is a critical point of $u_\e$ such that $x_\e\to P$ as $\e\to0$ then we have that
\begin{equation}\label{d2019-11-16-01}
x_\varepsilon=P+
\begin{cases}
\left[\frac{ (2-N)u_0(P)+o(1)}{\la}\right]^\frac1N
\varepsilon^{\frac{N-2}{N}}v~&\mbox{if}~N\geq 3,\\
\sqrt{-\frac{u_0(P)+o(1)}\la}\frac{1}{\sqrt{|\log\e|}}v~&\mbox{if}~N=2,
\end{cases}
\end{equation}
where  $\lambda$ is a negative  eigenvalue of the matrix $\textbf{H}(P)$ and $v$ an associated eigenfunction with $|v|=1$.
\end{prop}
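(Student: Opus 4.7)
The plan is to substitute the critical point condition $\nabla u_\varepsilon(x_\varepsilon)=0$ into the sharp expansion \eqref{C10} and then carry out a two-step asymptotic analysis: first pin down the correct scale of $|x_\varepsilon-P|$, then identify its direction as an eigenvector of $\textbf{H}(P)$. The main difficulty I anticipate is the scale-determination step, where two distinct error terms must be balanced against the leading Hessian contribution.

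First I would set $\xi_\varepsilon=x_\varepsilon-P$. Since $\nabla u_0(P)=0$, the Taylor expansion reduces the first sum in \eqref{C10} to $\textbf{H}(P)\xi_\varepsilon+o(|\xi_\varepsilon|)$. The equation $\nabla u_\varepsilon(x_\varepsilon)=0$ then becomes, for $N\ge 3$,
$$\textbf{H}(P)\xi_\varepsilon+o(|\xi_\varepsilon|)=-(N-2)\bigl(u_0(P)+o(1)\bigr)\frac{\xi_\varepsilon}{|\xi_\varepsilon|^{N}}\varepsilon^{N-2},$$
and the analogous identity with $\varepsilon^{N-2}/|\xi_\varepsilon|^{N}$ replaced by $1/(|\xi_\varepsilon|^2|\log\varepsilon|)$ when $N=2$. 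Here I am implicitly using that the Lemma~\ref{bdL1} error $o(\varepsilon^{N-2}/|\xi_\varepsilon|^{N-1})$ (resp.\ $o(1/(|\xi_\varepsilon||\log\varepsilon|))$) has already been absorbed into the right-hand side.

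Next I would determine $|\xi_\varepsilon|$. Since $P$ is nondegenerate, $\textbf{H}(P)$ is invertible and $|\textbf{H}(P)\xi_\varepsilon|\asymp|\xi_\varepsilon|$. Comparing magnitudes on the two sides of the displayed identity, set $r_\varepsilon:=|\xi_\varepsilon|^{N}/\varepsilon^{N-2}$: if $r_\varepsilon\to\infty$ the right-hand side is $o(|\xi_\varepsilon|)$ and invertibility forces $|\xi_\varepsilon|=o(|\xi_\varepsilon|)$; if $r_\varepsilon\to 0$ the singular term dominates and the identity cannot hold. Both regimes are impossible, so $|\xi_\varepsilon|\asymp\varepsilon^{(N-2)/N}$ for $N\ge 3$, and analogously $|\xi_\varepsilon|\asymp 1/\sqrt{|\log\varepsilon|}$ for $N=2$. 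This is the step where the competing errors $o(|\xi_\varepsilon|)$ and $o(\varepsilon^{N-2}/|\xi_\varepsilon|^{N-1})$ have to be tracked simultaneously, and the nondegeneracy of $\textbf{H}(P)$ is crucially used.

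Having the scale, write $\xi_\varepsilon=t_\varepsilon v_\varepsilon$ with $t_\varepsilon=|\xi_\varepsilon|$ and $|v_\varepsilon|=1$, and divide by $t_\varepsilon$ to obtain
$$\textbf{H}(P)v_\varepsilon+o(1)=\mu_\varepsilon v_\varepsilon+o(1),\qquad \mu_\varepsilon:=-\frac{(N-2)\bigl(u_0(P)+o(1)\bigr)\varepsilon^{N-2}}{t_\varepsilon^{N}}.$$
By the previous step $\mu_\varepsilon$ is bounded and bounded away from zero, and is strictly negative because $u_0(P)>0$ and $N\ge 3$. Passing to a subsequence, $v_\varepsilon\to v$ with $|v|=1$ and $\mu_\varepsilon\to\lambda<0$ with $\textbf{H}(P)v=\lambda v$, so $\lambda$ is a negative eigenvalue of $\textbf{H}(P)$ and $v$ an associated unit eigenvector. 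Finally, inverting the definition of $\mu_\varepsilon$ gives
$$t_\varepsilon=\Bigl[\tfrac{(2-N)u_0(P)+o(1)}{\lambda}\Bigr]^{1/N}\varepsilon^{(N-2)/N},$$
which combined with $x_\varepsilon=P+t_\varepsilon v_\varepsilon$ and $v_\varepsilon\to v$ yields \eqref{d2019-11-16-01}. The case $N=2$ is handled identically, with $t_\varepsilon^{2}$ in place of $t_\varepsilon^{N}$, with $1/|\log\varepsilon|$ in place of $\varepsilon^{N-2}$, and with the square-root formula in place of the $N$-th root.
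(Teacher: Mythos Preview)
Your proof is correct and follows essentially the same route as the paper: substitute $\nabla u_\varepsilon(x_\varepsilon)=0$ into the expansion \eqref{C10}, use the nondegeneracy of $\textbf{H}(P)$ to pin down the scale of $|x_\varepsilon-P|$, then divide by $|x_\varepsilon-P|$ and pass to the limit to identify the direction as a unit eigenvector of $\textbf{H}(P)$ with negative eigenvalue. The paper's argument is terser (it simply asserts the limit of the scalar coefficient exists and then divides), whereas you spell out the dichotomy argument that rules out $r_\varepsilon\to0$ and $r_\varepsilon\to\infty$; this is a welcome clarification but not a different method.
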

\begin{proof}
By \eqref{C10} we get that if $\frac{\partial u_\e}{\partial x_i}(x_\e)=0$ then
\begin{equation}\label{2019-11-16-01}
0=\sum^N_{j=1}\left(\frac{\partial^2 u_0(P)}{\partial x_i\partial x_j}+o(1)\right)(x_{j,\e}-P_j)+
 \begin{cases}
(N-2)\big(u_0(P)+o(1)\big)\frac{x_{i,\e}-P_i}{|x_\e-P|^N}\e^{N-2}&~\mbox{if}~N\geq 3,\\
\big(u_0(P)+o(1)\big)\frac{x_{i,\e}-P_i}{ |x_\e-P|^2}\frac{1}{|\log\e|}&~\mbox{if}~N=2.
\end{cases}
\end{equation}
By \eqref{2019-11-16-01} we immediately get that, as $\e\to0$,
\begin{itemize}
\item if $N\ge3$ then $(2-N)u_0(P)\frac{\e^{N-2}}{|x_\e-P|^N}\to\lambda$.
\item if $N=2$ then $-u_0(P)\frac{1}{ |x_\e-P|^2|\log\e|}\to\lambda$.
\end{itemize}
Since  $det~\textbf{H}(P)\neq 0$ we have that  $\lambda$ is a negative eigenvalue of the matrix $\textbf{M}(P)$. Dividing \eqref{2019-11-16-01} by $|x-x_\e|$ and passing to the limit the claim \eqref{d2019-11-16-01} follows.
\end{proof}
By the previous proposition we get the following corollary.
\begin{cor}\label{C11}
Let us suppose that $P$ is a nondegenerate minimum point of $u_0$ and $B(P,d)$ is a ball such that $\nabla u_0\ne0$ for any $x\in B(P,d)\setminus\{P\}$. Then there is no critical point of $u_\e$ in $B(P,d)\setminus B(P,\e)$.
\end{cor}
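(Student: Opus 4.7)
My plan is to argue by contradiction. Suppose that there exist a sequence $\e_n\to 0$ and critical points $x_n:=x_{\e_n}\in B(P,d)\setminus B(P,\e_n)$ of $u_{\e_n}$. Passing to a subsequence, I may assume $x_n\to x_0\in\overline{B(P,d)}$, and I will derive a contradiction in each of the two alternatives $x_0\ne P$ and $x_0=P$.

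If $x_0\ne P$, the sequence $x_n$ is eventually contained in a compact set $K\Subset\O\setminus\{P\}$. Lemma \ref{lemma-1} then gives $u_{\e_n}\to u_0$ in $C^2(K)$, so passing to the limit in $\nabla u_{\e_n}(x_n)=0$ yields $\nabla u_0(x_0)=0$. Since $x_0\in\overline{B(P,d)}\setminus\{P\}$ (shrinking $d$ a little if necessary, so that $\nabla u_0\ne 0$ persists up to the boundary by continuity), this contradicts the assumption that $\nabla u_0\ne 0$ on $B(P,d)\setminus\{P\}$.

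If instead $x_n\to P$, I further split according to the scale of $|x_n-P|/\e_n$. When this ratio remains bounded, $|x_n-P|\le R\e_n$ for some fixed $R>0$, the sharp $C^2$ expansion \eqref{m10} of Proposition \ref{prop3} applies on $B(P,R\e_n)\setminus B(P,\e_n)$; differentiating and writing $x=P+\e_n w$ with $1\le|w|\le R$, the gradient is dominated (using $u_0(P)>0$ and $\nabla u_0(P)=0$) by the singular contribution coming from the fundamental solution term, whose magnitude is much larger than the bounded remainder and therefore cannot vanish. When instead $|x_n-P|/\e_n\to\infty$, I am in the scope of the proposition just preceding the corollary, hence \eqref{d2019-11-16-01} forces $\la$ to be a negative eigenvalue of $\textbf{H}(P)$. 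But $P$ being a nondegenerate minimum means $\textbf{H}(P)$ is positive definite, so no negative eigenvalue exists, a contradiction.

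The proof is essentially a direct application of the tools already developed. The only point that requires a moment of care is the transition scale $|x-P|\sim\e$, where neither Lemma \ref{lemma-1} (which requires a compact set away from $P$) nor formula \eqref{d2019-11-16-01} (which requires $|x-P|/\e\to\infty$) applies; here the fine $C^2$ expansion of Proposition \ref{prop3} fills the gap.
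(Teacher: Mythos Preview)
Your argument is the paper's: the corollary is stated there as an immediate consequence of the preceding proposition, since a critical point $x_\e\to P$ would force, via \eqref{d2019-11-16-01}, the existence of a negative eigenvalue of $\textbf{H}(P)$, impossible at a nondegenerate minimum. You are in fact more careful than the paper's one-line justification, because you separate out the case $x_\e\not\to P$ (handled by Lemma \ref{lemma-1}) and the scale $|x_\e-P|=O(\e)$ (handled by Proposition \ref{prop3}); the paper does not make this explicit.

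One caveat on the last case. For $N\ge3$ your reasoning is correct: with $x=P+\e w$, $|w|\in[1,R]$, the $C^2$ limit in Proposition \ref{prop3} gives $\nabla_w\eta_\e(w)\to(N-2)u_0(P)\,w/|w|^N\ne0$, while $\e\nabla_xu_0(P+\e w)=O(\e^2)$. For $N=2$, however, the $C^2$ limit of $\eta_\e$ in Proposition \ref{prop3} is the \emph{constant} $-u_0(P)$, so $\nabla_w\eta_\e\to0$; the logarithmic correction in \eqref{m10} contributes only $O(1/|\log\e|)$ to $\nabla_w\eta_\e$, which does \emph{not} dominate the $o(1)$ remainder coming from the $C^2$ convergence. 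Hence your phrase ``much larger than the bounded remainder'' is unjustified when $N=2$ at this scale. The paper glosses over this point as well; closing it would require tracking the $1/|\log\e|$ order in the $C^1$ norm near $\partial B(P,\e)$.
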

Here we are in position to give the proofs of Theorem \ref{th1-2}.

\smallskip

\begin{proof}[\textbf{Proof of Theorem \ref{th1-2}}]~Since $P$ is a nondegenerate critical point of $u_0$ from  the $C^2$ convergence of $u_\e$ to $u_0$ we find that
\begin{equation}\label{2019-11-27-06}
\sharp\{\mbox{critical points of $u_\varepsilon(x)$ in $\Omega\backslash B(P,d)$}\}= \sharp\{\mathcal{C}\}-1,
\end{equation}
where $d>0$ is small fixed constant with $\mathcal{C}\bigcap \Big(B(P,d)\backslash B(P,\varepsilon)\Big)=\varnothing$.

Let us introduce the vector field $\widehat{F}(y)=\big(\widehat{F}_{1}(y),..,\widehat{F}_{N}(y)\big)$ on $\widetilde{W}$ as
\begin{equation*}
\widehat{F}_{i}(y):=\sum^N_{j=1}\frac{\partial^2 u_0(P)}{\partial x_i\partial x_j}y_j+
 \begin{cases}
(N-2)u_0(P)\frac{y_i}{|y|^N} &~\mbox{if}~N\geq 3,\\
u_0(P)\frac{y_i}{ |y|^2} &~\mbox{if}~N=2,
\end{cases}
\end{equation*}
and
\begin{equation*}
\widetilde{W}=
\begin{cases}B\left(0,\frac d{\e^\frac{N-2}{N}}\right)\backslash B\left(0,\varepsilon^\frac2{N}\right)~&\mbox{if}~N\geq 3,\\
B\left(0, d\sqrt{|\ln \e|}\right)\backslash B\left(0,\varepsilon\sqrt{|\ln \e|}\right)~&\mbox{if}~N=2.
\end{cases}
\end{equation*}
Suppose that $\textbf{H}(P)$ has $m$ negative eigenvalues
$$\lambda_1\leq \lambda_{2} \leq \cdots \leq \lambda_m<0,$$
with associated eigenfuctions $v_1,..,v_m$ satisfying $|v_i|=1$ for $i=1,..,m$.
By definition, if $\widehat{F}(\bar{y})=0$ we have that for $j\in\{1,..,m\}$,
\begin{equation}\label{B26}
\bar{y}=y^{(j)}_\pm=\pm r_jv_j~\mbox{with}~ r_j=
\begin{cases}
\left(\frac{(2-N)u_0(P)}{\la_j}\right)^\frac1N&~\mbox{if}~N\geq 3,\\
\sqrt{-\frac{u_0(P)}{\la_j}}&~\mbox{if}~N=2.
\end{cases}
\end{equation}
Let us compute the index of the vector field $\widehat{F}$ at $=y^{(j)}_\pm$. We have that, denoting by $v_j=(v_{j1},..,v_{jN})$,
$$\textbf{M}_{ik}\left(y^{(j)}_\pm\right)=\frac{\partial\widehat{F}_i\left(y^{(j)}_\pm\right)}{\partial y_k}=\frac{\partial^2 u_0(P)}{\partial x_{i}\partial x_{k}}-\lambda_i\delta_{ik}+N\lambda_iv_{ji}v_{jk}.$$
We compute the determinant of the matrix $\textbf{M}(\bar{y})$ by writing its eigenvalues. We have that,
\begin{itemize}
\item Every eigenvector $v_n\ne v_j$ of $\textbf{H}(P)$,  $n=1,..,N$   is an eigenvector of $\textbf{M}(\bar{y})$ with eigenvalues $\la_n-\la_j$.
\item The eigenvector $v_j$ of $\textbf{H}(P)$  is an eigenvector of $\textbf{M}(\bar{y})$ with eigenvalue $N\la_j$.
\end{itemize}
Hence we have that
\begin{equation}\label{B27}
\det\textbf{M}(\bar{y})=N\la_j\prod_{n\ne j}(\lambda_n-\lambda_j).
\end{equation}
Note that if all the negative eigenvalues of $\textbf{H}(P)$ are $simple$ then $\det\textbf{M}(\bar{y})\ne0$.
\vskip0.2cm

Now we consider two different cases.
\vskip0.2cm
\noindent {\underline{Case 1} (negative eigenvalues of  $\textbf{H}(P)$ are simple)}
\vskip0.2cm

For any $j=1,\cdots,m$, there exists a constant $d_1>0$ such that $\widehat{F}(y)$ has exactly one zero in $B\left(y^{(j)}_+,d_1\right)$ and $B\left(y^{(j)}_-,d_1\right)$ respectively. Moreover by \eqref{B27} we have that $y_+$ and $y_-$ are $non$-$singular$ zeros. Furthermore
$\widehat{F}(y)$ has no solutions in $\widetilde{W} \backslash\bigcup^{m}_{j=1}B\left(y^{(j)}_\pm,d_1\right)$.

Next let us introduce the vector field
$\widetilde{F}_\varepsilon(y)=\big(\widetilde{F}_{\varepsilon,1}(y),\cdots,\widetilde{F}_{\varepsilon,N}(y)\big)$ on $\widetilde{W}$
as
$$\widetilde{F}_{\varepsilon,i}(y)=\begin{cases}\frac{\partial u_\e}{\partial x_i}\left(P+\e^\frac{N-2}{N}y\right)~&\mbox{if}~N\geq 3,\\
\frac{\partial u_\e}{\partial x_i}\left(P+ \frac{1}{\sqrt{|\ln \e|}}y\right)~&\mbox{if}~N=2.
\end{cases}
$$
Moreover by Proposition \ref{prop2.4} and Proposition \ref{B35} (which we apply when $\beta=\frac{N-2}{N}$ and $\delta=\frac{1}{2}$) we have that $\widetilde{F}_\e\to \widetilde{F}$ in $C^1\big(B(\pm r_jv_j,d_1)\big)$. Hence Remark \ref{remdeg} applies and then $\widetilde{F}_\varepsilon(y)$  has a unique solution $y^{(j)}_{\e,\pm}\to y^{(j)}_\pm$ in $B\left(y^{(j)}_\pm,d_1\right)$.

Furthermore by Proposition \ref{prop2.4}  $\widehat{F}_\e(y)$ has no solutions in $\widetilde{W} \backslash\bigcup^{m}_{j=1}B\left(y^{(j)}_\pm,d_1\right)$.
Hence $\widetilde{F}_\varepsilon(y)$ has $2m$ zeros $y^{(j)}_{\e,\pm}$ and then $u_\e(x)$ has $2m$ critical points in $B(P,d)\backslash B(P,\varepsilon)$ satisfying
\eqref{I21}. Finally by \eqref{2019-11-27-06} we get \eqref{adda2019-11-27-07} which
ends the proof of $(1)$ in Theorem \ref{th1-2}.
 \vskip0.2cm

 \noindent
{\underline{Case 2} (negative eigenvalues of  $\textbf{H}(P)$ are multiple)}
\vskip 0.2cm
In this case we have by \eqref{B27} that $\det\textbf{M}(\bar{y})=0$. Since  the degree theory seems  difficult to use in this case, we will prove the claim showing directly that there are at least $2m$ zeros for $\nabla u_\e$.

Let us denote by $\textbf{Q}$ the orthogonal matrix such that
$$\textbf{Q}^T\textbf{H}(P)\textbf{Q}=diag(\la_1,..,\la_N).$$
Hence, denoting by $Y=\textbf{Q}^T(x-P)$ we get that \eqref{2019-11-16-01} becomes
\begin{equation*}
0=\big(\la_i+o(1)\big)Y_i+
 \begin{cases}
(N-2)\big(u_0(P)+o(1)\big)\frac{Y_i}{|Y|^N}\e^{N-2}&~\mbox{if}~N\geq 3,\\
-\big(u_0(P)+o(1)\big)\frac{Y_i}{ |Y|^2}\frac{1}{|\log\e|}&~\mbox{if}~N=2.
\end{cases}
\end{equation*}
Let us consider the case $N\ge3$ ($N=2$ can be managed in the same way) and introduce the points
$Y_{1,\e}=
\big((1-b)r_1\e^\frac{N-2}N,0,..,0\big)$ and $Y_{2,\e}=\big((1+b)r_1\e^\frac{N-2}N,0,..,0\big)$ for $b\in(0,1)$ and $r_1$ as in \eqref{B26}.
We have that for any $i=2,..,N$ it holds $\frac{\partial u_\e(Y_{1,\e})}{\partial y_i}=\frac{\partial u_\e(Y_{2,\e})}{\partial y_i}=0$ and

\begin{equation*}
\begin{split}
\frac{\partial u_\e(Y_{1,\e})}{\partial y_1}=&\textbf{Q}^T\frac{\partial u_\e(Y_{1,\e})}{\partial x_1}=
\big(\la_1+o(1)\big)Y_{1,\e}+(N-2)\big(u_0(P)+o(1)\big)\frac{Y_{1,\e}}{|Y_{1,\e}|^N}\e^{N-2}\\=
&(1-b)r_1\la_1\e^\frac{N-2}N\left[1-\frac1{(1-b)^N}+o(1)\right]>0,
 \end{split}\end{equation*}
and analogously
$$\frac{\partial u_\e(Y_{2,\e})}{\partial y_1}=(1-b)r_1\la_1\e^\frac{N-2}N\left[1-\frac1{(1+b)^N}+o(1)\right]<0.$$
This implies that there exists $Y_\e=\big(A\e^\frac{N-2}N,0,..,0\big)$ with $A\in\big((1-b)r_1,(1+b)r_1\big)$ such that $\nabla u_\e(Y_\e)=0$. Of course the same computation holds if we replace
$Y_{1,\e},Y_{2,\e}$ by $-Y_{1,\e},-Y_{2,\e}$ getting the existence of a $second$ critical point.
Finally repeating this argument for any negative eigenvalue we get the existence of at least $2m$ critical points which ends the proof.
\end{proof}

\vfill\eject

\section{Examples and extensions of main theorems}\label{s6}
In this section we discuss some examples where the results stated in the introduction apply.
We consider separately the case $\nabla u_0(P)\ne0$ and $\nabla u_0(P)=0$.
\subsection{Case $1:\nabla u_0(P)\ne0$}
\begin{ese}\label{D2}
Let $\O\subset\R^N$ be symmetric and convex with respect $x_1,..,x_N$ with $N\ge2$, $P\neq 0$  and $u_\e$ solution of
 \begin{equation}\label{I9}
\begin{cases}
-\Delta u=u^p~&\mbox{in}\ \O_\varepsilon,\\
u>0~&\mbox{in}\ \O_\varepsilon,\\
u=0~&\mbox{on}\ \partial\O_\varepsilon,
\end{cases}
\end{equation}
with $1<p<\frac{N+2}{N-2}$ for $N\ge3$ and $p>1$ if $N=2$. Moreover assume that
\begin{equation}\label{I10}
\int_{\O_\e}|\nabla u_\e|^2\le C,\quad\hbox{$C$ independent of $\e$}.
\end{equation}
Then $u_\e$ admits exactly $two$ critical points.
\end{ese}
\begin{proof}
Observe that \eqref{I10} is satisfied if  we consider a minimizer $u_\e$ of
\begin{equation}\label{D5}
\inf\limits_{u\in H^1_0(\O_\e),u\not\equiv0}\frac{\int_{\O_\e}|\nabla u|^2}{\left(\int_{\O_\e}|u|^{p+1}\right)^\frac2{p+1}}.
\end{equation}
By Corollary \ref{I11} it is enough to prove that $|u_\e|\le C$ in $\O_\e$ with $C$ independent of $\e$.
To do this we follow the line of the Gidas-Spruck proof in \cite{GS1981}. By contradiction suppose that $\|u_\e\|_\infty\to+\infty$
and let $x_\e$ be such that $\|u_\e\|_\infty=u_\e(x_\e)$ and
$v_\e:\|u_\e\|_\infty^\frac{p-1}2\big(\O_\e-x_\e\big)\to\R$ defined as
\begin{equation}\label{D6}
v_\e(x)=\frac1{\|u_\e\|_\infty}u_\e\left(x_\e+\frac x{\|u_\e\|_\infty^\frac{p-1}2}\right).
\end{equation}
Since $|v_\e|\le1$ it is immediate to check that $v_\e\to v$ in $C^2_{loc}(D)$ where $D$ is the limit domain of $\|u_\e\|_\infty^\frac{p-1}2\big(\O_\e-x_\e\big)$. Moreover $v$ satisfies
\begin{equation}\label{D4}
\begin{cases}
-\Delta v=v^p~&\mbox{in}\ D,\\
v>0~&\mbox{in}\ D,\\
v(0)=1,\\
v=0~&\mbox{on}\ \partial D.
\end{cases}
\end{equation}
We have that $D$ can be the whole space, a half-space or the exterior of a ball. The first two cases lead to a contradiction as in \cite{GS1981} because there is no solution to \eqref{D4}. Unfortunately we do not have a non-existence result for solutions to \eqref{D4} in the exterior of a ball and then the contradiction does not follow directly as before. On the other hand we have that by \eqref{I10} and $1<p<\frac{N+2}{N-2}$,
\begin{equation*}
\int_{\|u_\e\|_\infty^\frac{p-1}2\big(\O_\e-x_\e\big)}|\nabla v_\e|^2=\frac1{\|u_\e\|_\infty^{p+1-\frac{N(p-1)}2}}
\int_{\O_\e}|\nabla u_\e|^2\to0.
\end{equation*}
which gives that $v\equiv0$, a contradiction with $v(0)=1$. This proves that $|u_\e|\le C$ and it ends the proof.
\end{proof}
In the next example we remove the symmetry assumption replacing it with the condition that $p$ is close to $\frac{N+2}{N-2}$.

\begin{ese}
Let $\O\subset\R^N$, $N\ge3$ be convex and $u_\e$ solution of \eqref{I9} satisfying  \eqref{I10}. Then for $p$ sufficiently close to $\frac{N+2}{N-2}$ we have that
 $u_\e$ admits exactly $two$ critical points for $\e$ small enough.
\begin{proof}
In \cite{GM2003} it was showed that the solutions minimizing \eqref{D5} for $p=\frac{N+2}{N-2}-\d$
 admits a unique critical point (its maximum) if $\O$ is convex and $\d$ is small enough. Let us show that the maximum point $x_\d$ is nondegenerate. This is a consequence of the classical blow-up argument where is proved that the function $v_\d$ defined in \eqref{D6} satisfies
\begin{equation*}
v_\d(x)\to\frac{[N(N-2)]^\frac{N-2}4}{(1+|x|^2)^\frac{N-2}2}\,\,\quad\hbox{in }C^2\big(B(0,1)\big),
\end{equation*}
as $\d\to0$. Hence
$$\frac1{\|u_\d\|_\infty^p}\frac{\partial^2u_\d}{\partial x_i\partial x_j}(x_\d)=\frac{\partial^2v_\d}{\partial x_i\partial x_j}(0)=C(N)\d_{ij}+o(1),$$
which proves the nondegeneracy of the maximum point $x_\d$.

Next let us fix $\d$ small such that the previous properties hold and consider a solution $u_\e$ of \eqref{I9}. Then using the result of the previous example, we get that \eqref{2019-11-25-46} holds and Theorem \ref{th1-1} implies that $u_\e$ has exactly two critical points.
\end{proof}
\end{ese}
The last example is concerned with  semi-stable solutions as in Cabr\'e-Chanillo setting.

\begin{ese}\label{D1}
Let $\O\subset\R^2$ be a smooth bounded domain whose boundary  has positive curvature and $u_\e$ semi-stable positive solution to
 \begin{equation}\label{D8}
\begin{cases}
-\Delta u=\lambda f(u)~&\mbox{in}\ \O_\varepsilon,\\
u=0~&\mbox{on}\ \partial\O_\varepsilon,
\end{cases}
\end{equation}
where $f>0$ is an increasing function. Then for any $0<\lambda<\lambda^*$ and $\e$ small enough we have that $u_\e$ admits exactly $two$ critical points.
\begin{proof}
If we consider the problem
 \begin{equation}\label{D9}
\begin{cases}
-\Delta u=\lambda f(u)~&\mbox{in}\ \O,\\
u=0~&\mbox{on}\ \partial\O,
\end{cases}
\end{equation}
 it is known that there exists $\lambda^*>0$ such that for any  $0<\lambda<\lambda^*$ there exists a semi-stable solution to \eqref{D9}. Let us fix such a $\lambda$ and
consider a solution $u_\e$ to \eqref{D8}. It was proved in \cite{S}, pages $28$--$29$, that $\underline{u}=0$ is a subsolution and $\overline{u}=\a\left(\frac1{4D^2}-|x|^2\right)$ is a supersolution. Here $D$ is chosen such that $\O\subset B\left(0,\frac D2\right)$ and $\a$ (independent of $\e$) is properly chosen. So  \eqref{D8} admits a solution $u_\e$ which verifies $|u_\e|\le C$ with $C$ independent of $\e$.  Finally we have that by \cite{CC1998} the assumptions in
Theorem \ref{th1-1} are satisfied and then we get that $u_\e$ has exactly two critical points.
\end{proof}
\end{ese}
\begin{rem}
Note that the case of the first eigenfunction of $-\Delta$ falls in this last example.
\end{rem}
\subsection{Case $2:\nabla u_0(P)=0$}\label{D3}~

\vskip 0.2cm

First let us discuss some examples which satisfy the condition \eqref{2019-12-26-01}.
\begin{itemize}
\item $f(u)\equiv 1$. This is the well known torsion problem.  Here \eqref{2019-12-26-01} holds directly.
\item $\O$ convex and symmetric with respect to $P=0$ as in the Gidas, Ni and Nirenberg Theorem.
Observe that similarly as in  Lemma 2.1 in \cite{G2002} it was proved that in this case $\frac{\partial G(0,y)}{\partial x_i}$ and $\frac{\partial H(0,y)}{\partial x_i}$ are odd  with respect to $y_i$ for any $i=1,\cdots,N$. Let us prove that \eqref{2019-12-26-01} holds. Assuming that $u_\e$ is a solution to \eqref{aa2} which  verifies \eqref{2019-11-25-46} and $u_0$ its weak limit we have
 \begin{equation}\label{ad}
    \begin{split}
\int_{\Omega_\varepsilon}&\Big(f\big(u_\varepsilon(y)\big)-f\big(u_0(y)\big)\Big)
\frac{\partial G(x,y)}{\partial x_i}dy \\
=&\int_{\Omega_\varepsilon} \Big(f\big(u_\varepsilon(y)\big)-f\big(u_0(y)\big)\Big)
\left( \frac{\partial S(x,y)}{\partial x_i}+\frac{\partial H(0,y)}{\partial x_i}+\sum_{j=1}^N\frac{\partial^2 H(\xi,y)}{\partial x_i\partial x_j}x_j\right)dy
\\
=&\left(\hbox{using the oddness of }\frac{\partial H(0,y)}{\partial x_i} \right)
\\
=&o\big(|x|\big)- \int_{\Omega_\varepsilon} \Big(f\big(u_\varepsilon(y)\big)-f\big(u_0(y)\big)\Big)
\frac{\partial  S(x,y)}{ \partial y_i}  dy\\
=&
o\big(|x|\big)+ \int_{\Omega_\varepsilon} \frac{\partial \Big(f\big(u_\varepsilon(y)\big)-f\big(u_0(y)\big)\Big)}{\partial y_i}
 S(x,y)  dy +O \left(\int_{\partial B(0,\varepsilon)}\big|S(x,y)\big|d\sigma(y)\right)\\
 =&
o\big(|x|\big)+ O\left(\int_{\Omega_\varepsilon} \left|\frac{\partial \Big(f\big(u_\varepsilon(y)\big)-f\big(u_0(y)\big)\Big)}{\partial y_i}\right| \cdot \left| \sum_{j=1}^N\frac{\partial S(\xi,y)}{\partial x_j}x_i\right|
\right)dy\\&+O \left(\int_{\partial B(0,\varepsilon)}\big|S(x,y)\big|d\sigma(y)\right)\\
 =&
o\big(|x|\big) +
\begin{cases}
O\Big(\frac{\e^{N-1}}{|x|^{N-2}}\Big)&~\mbox{for}~N\geq 3,\\
O\Big(\e\cdot\big|\log |x|\big|\Big)&~\mbox{for}~N=2,
\end{cases}
\end{split}
\end{equation}
where $\xi$ is between $0$ and $x$.   Hence \eqref{2019-12-26-01} follows.
\end{itemize}
\begin{ese}
Let $\O\subset\R^N$ be symmetric and convex with respect $x_1,..,x_N$ with $N\ge2$, $P=0$  and $u_\e$ solution of \eqref{I9}
with $1<p<\frac{N+2}{N-2}$ for $N\ge3$ and $p>1$ if $N=2$. Moreover assume that
 \eqref{I10} holds. Then $u_\e$ admits at least $2N$ critical points.
\end{ese}
\begin{proof}
Similar to the proof of Example \ref{D2}, we have   that $|u_\e|\le C$ in $\O_\e$ with $C$ independent of $\e$. Also $0$ is the unique critical point of $u_0(x)$, which is a nondegenerate and  maximum point of $u_0(x)$. Then from \eqref{ad} and Theorem \ref{th1-2}, the claim follows.
\end{proof}
\begin{proof}[\textbf{Proof of Corollary \ref{cor1.7}}]
Set $u_\e(x)=u_\e(r)$with $r=|x|$. Since $f\ge0$ with $f\big(u_0(0)\big)>0$, integrating \eqref{aa2} we immediately get that $u_\e$ has a unique critical point $r_\e$.

By the discussion in Section \ref{D3} we have that \eqref{2019-12-26-01} holds and \eqref{C10} becomes (here $P=0$)
\begin{equation*}
\frac{u_\e'(r)}r=u_0''(0)+o(1)+
 \begin{cases}
(N-2)\big(u_0(0)+o(1)\big)\frac1{r^N}\e^{N-2}&~\mbox{if}~N\geq 3,\\
\big(u_0(0)+o(1)\big)\frac1{r^2}\frac{1}{|\log\e|}&~\mbox{if}~N=2,
\end{cases}
\end{equation*}
and then $u'(r_\e)=0$ if
\begin{equation*}
r_\e=
 \begin{cases}
\left[\left(\frac{N(N-2)u_0(0)}{f\big(u_0(0)\big)}\right)^\frac1N+o(1)\right]\e^\frac{N-2}N&~\mbox{if}~N\geq 3,\\
\left(\sqrt{\frac{u_0(0)}{2f\big(u_0(0)\big)}}+o(1)\right)\frac1{\sqrt{|\log\e|}}&~\mbox{if}~N=2.
\end{cases}
\end{equation*}
This ends the proof.
\end{proof}

\noindent\textbf{Acknowledgments} ~This work was done while Peng Luo  was visiting the Mathematics Department
of the University of Rome ``La Sapienza" whose members he would like to thank for their warm hospitality. Luo  was partially supported by NSFC grants (No.11701204,11831009) and the China Scholarship Council.

\end{document}